\documentclass[11pt, oneside, a4paper]{article}
\usepackage{geometry}
\title{Drinfeld modules in rank 2 with CM and S-unit j-invariants}
\author{ {Liam {\sc Baker}} \and {Fabien {\sc Pazuki}} \and {Patricio {\sc P{\'e}rez-Pi{\~n}a}} }
\date{} 
\usepackage[english]{babel}
\usepackage[utf8]{inputenc}
\usepackage{lmodern}
\usepackage{amssymb,amscd,amsthm}
\usepackage{mathrsfs}
\usepackage{tikz-cd}
\usepackage{parskip}
\usepackage{mathtools}
\mathtoolsset{centercolon}
\usepackage{enumerate}
\usepackage{graphicx}
\usepackage{csquotes}
\usepackage{comment}
\usepackage{float}
\usepackage[backend=biber, maxnames=99]{biblatex}
\usepackage{titlesec}
\titleformat{\subsection}[runin]{\bfseries}{\thesubsection.}{0.2em}{}[.\hspace{0.4em}-- ]        
\titleformat{\section}{\center\Large\bfseries}{\thesection.}{0.2em}{}[]
\usepackage{url}
\definecolor{BrickRed}{RGB}{153, 0, 0}
\definecolor{TealBlue}{RGB}{84, 181, 183}
\definecolor{Orchid}{RGB}{164,117,181}
\definecolor{Violet}{RGB}{123,59,143}
\usepackage[colorlinks=true, citecolor=Orchid, linkcolor=Violet, urlcolor=TealBlue]{hyperref}
\usepackage{scalerel}
\usepackage{biblatex}
\newtheorem{itheo}{Theorem}

\newtheorem{ipropo}[itheo]{Proposition}

\newtheorem{prop}{Proposition}[section]

\newtheorem{theorem}[prop]{Theorem}

{   \theoremstyle{definition}
    \newtheorem{definition}[prop]{Definition}
    \newtheorem{rem}[prop]{Remark}
    \newtheorem{ex}[prop]{Example}}
\newcommand{\F}{\ensuremath{\mathbb{F}}}

\newcommand{\C}{\ensuremath{\mathbb{C}}}

\renewcommand{\O}{\ensuremath{\mathcal{O}}}

\def\p[#1]_#2{
	\setbox0=\hbox{$\scriptstyle{#2}$}
	\setbox2=\hbox{$\displaystyle{#1}$}
	\setbox4=\hbox{${}'\mathsurround=0pt$}
	\dimen0=.5\wd0 \advance\dimen0 by-.5\wd2
	\ifdim\dimen0>0pt
	\ifdim\dimen0>\wd4 \kern\wd4 \else\kern\dimen0\fi\fi
	\mathop{{#1}'}_{\kern-\wd4 #2}}
\def\prodp_#1{\p[\prod]_{#1}}

\begin{document}
\pagestyle{plain}
    \maketitle

    \centerline{\rule{7cm}{0.5pt}}
	\paragraph{Abstract --}%
    We prove the finiteness of the set of $j$-invariants of Drinfeld modules of rank 2 over $\mathbb{F}_q[T]$ which are CM and $S$-units, for $S$ the infinite set of primes with even degrees.
    The proof is based on the study of ordinary reduction and supersingular reduction of Drinfeld modules, and on the splitting behaviour of primes dividing the difference of two Drinfeld singular moduli.
    We also provide an algorithm to compute a polynomial with coefficients in $\F_q[T]$ and roots the $j$-invariants having CM by a given order, and use it to compute some explicit examples, providing for instance counterexamples to a conjecture of Dorman. For a maximal order $\mathcal{O}$, we prove by a universality argument that our algorithm computes the Hilbert modular polynomial $H_\mathcal{O}$.
	\medskip
	
	\noindent\textit{Keywords:}   
	Drinfeld modules, 
    Complex multiplication, 
    Drinfeld singular moduli, Hilbert polynomials.
    
	\smallskip
	\noindent\textit{2020 Mathematics Subject Classification:}
    {Primary 11G09, 11G15, 11R37, 11Y40, 14G17, 14G40; Secondary 11J93.}
    % %11G09: Drinfel'd modules, higher dimensional motives
    % %11G15: Complex multiplication and moduli of abelian varieties
    % %11R37: Class field theory
    % %14G17: Positive characteristic ground fields in algebraic geometry
    % %14G40: Arithmetic varieties and schemes; Arakelov theory; heights
    % %11J93: Transcendence theory of Drinfel'd and t-modules

    % %11G50: Heights
    % %14K02: isogeny

    \centerline{\rule{7cm}{0.5pt}}
%%%%%%%%%%%%%%%%%%%%%%%%%%%%%%%%%%%%%%%%%%%%%%%%%%%%%%%%%%%%%%%%%%%%%%%%%%%%%%%%%%%%%%%%%%%%%%%%%%%%%%%%%%%%%%%%%%%%%%%%%%%%%%%%%%%%
%%%%%%%%%%%%%%%%%%%%%%%%%%%%%%%%%%%%%%%%%%%%%%%%%%%%%%%%%%%%%%%%%%%%%%%%%%%%%%%%%%%%%%%%%%%%%%%%%%%%%%%%%%%%%%%%%%%%%%%%%%%%%%%%%%%%

\section*{Introduction}
%%%%%%%%%%%%%%%%%%%%%%%%%%%%%%%%%%%%%%%%%%%%%%%%%%%%%%%%%%%%%%%%%%%%%%%%%%%%%%%%%%%%%%%%%%%%%%%%%%%%%%%%%%%%%%%%%%%%%%%%%%%%%%%%%%%%
%%%%%%%%%%%%%%%%%%%%%%%%%%%%%%%%%%%%%%%%%%%%%%%%%%%%%%%%%%%%%%%%%%%%%%%%%%%%%%%%%%%%%%%%%%%%%%%%%%%%%%%%%%%%%%%%%%%%%%%%%%%%%%%%%%%%

The $j$-invariant of an elliptic curve with complex multiplication (CM) is traditionally referred to as a \textit{singular modulus}.
Singular moduli are algebraic integers which play a central role in the class field theory of imaginary quadratic fields, as they generate their Hilbert class fields, see for instance  \cite[Theorem 4.1 page 121]{Sil94}. They are furthermore involved in modern questions in diophantine geometry. Indeed, we know from \cite{And98} that an irreducible plane curve which is not a vertical line, not a horizontal line, and not a modular curve, contains at most finitely many points with coordinates $(j_1,j_2)$, where $j_1$ and $j_2$ are singular moduli.

This result of Andr\'e was made effective in \cite{Kuh12}, and in \cite{BMZ13}, in which the authors also proved that no pair of singular moduli lie on the hyperbola with equation $XY=1$.
Motivated by this example, Masser asked the question of whether a singular modulus can be a unit.
Bilu, Habegger, and K\"uhne \cite{BHK20} proved (extending earlier work of Habegger \cite{Hab15}) that there is no singular modulus which is also a unit.
This was generalised by Campagna in \cite{Cam21} to $S$-unit singular moduli, when $S$ is the infinite set of primes congruent to $1$ modulo~$3$.
Such a spectacular result, however, cannot be expected for simply any set $S$: singular moduli are algebraic integers that are not units, they are thus divisible by primes, so the set of singular moduli which are $S$-units will be non-empty for some sets of places $S$.
By using equidistribution results, Herrero, Menares, and Rivera-Letelier proved nonetheless that for any finite set of places $S$, there are at most finitely many singular moduli that are $S$-units, see \cite{HMRL24}.

Let us now focus on the counterparts of these results in the Drinfeld setting.
The $j$-invariant of a Drinfeld module of rank 2 with CM will be referred to as a \textit{Drinfeld singular modulus}.
These $j$-invariants are also crucial to the description of the Hilbert class field of quadratic CM fields in the function field setting, see for instance \cite[Theorem 7.5.17 page 451]{Papi} and the references therein.

The equivalent of Andr\'e's result is due to Breuer, see \cite{Bre05, Bre07, Bre12} for even more general results.
The finiteness of the set of Drinfeld singular moduli being units was obtained recently in \cite{AABP26}.
More precisely, for each fixed prime power $q\geqslant2$, it is shown in \cite{AABP26} that there are at most finitely many Drinfeld singular moduli that are units.
The actual existence of such elements, and the potential uniformity in $q$, remain intriguing facets of this problem.
The authors of \cite{AABP26} also rule out the existence of pairs of Drinfeld singular moduli on hyperbolas of equations $XY=\gamma$ for any non-zero $\gamma\in{\overline{\F}_q[T]}$ satisfying $\deg \gamma\leqslant q^2-2$.

The first aim of the present article is to extend the study to the case of Drinfeld singular moduli that are also $S$-units, for $S$ a particular set of places of $\F_q[T]$.
We work exclusively with the set $$S_{0}=\{\mathfrak{p}\mid \mathfrak{p}\mbox{ has even degree}\}$$ of places of even degrees, and we do so because we know how to decide whether the Drinfeld module has ordinary or supersingular reduction at these places.
The degree we refer to is the degree in $T$; in particular the set $S_{0}$ is infinite.
Our main result is the following:

\begin{itheo}\label{finiteness}
Let $q\geqslant2$ be a prime power.
Let $S_{0}$ be the set of finite places in $\F_q(T)$ of even degree.
There are at most finitely many Drinfeld singular moduli that are $S_{0}$-units.
\end{itheo}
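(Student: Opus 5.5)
The plan is to adapt the Bilu--Habegger--K\"uhne and Campagna strategy to the Drinfeld setting, combining a lower bound for the height of $j$ with an upper bound coming from the $S_0$-unit condition. Let $j$ be a Drinfeld singular modulus with CM by an order $\mathcal{O}$ of discriminant $\Delta$ in an imaginary quadratic extension $K/\F_q(T)$. Suppose $j$ is an $S_0$-unit. Since $j$ is integral over $A=\F_q[T]$, every prime $\mathfrak{P}$ of the ring class field dividing $j$ lies above a prime $\mathfrak{p}$ of even degree.

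\textbf{Local step: supersingular reduction.} First I would show that for a prime $\mathfrak{p}$ of even degree, $\mathfrak{P}\mid j$ forces the reduction of $\phi$ at $\mathfrak{P}$ to be supersingular. Reduction to $j=0$ means the reduced module has extra automorphisms by $\F_{q^2}$, so its endomorphism ring contains $A[\F_{q^2}]=\F_{q^2}[T]$. If the reduction were ordinary, its endomorphism algebra would be an imaginary quadratic field containing both $K$ and $\F_{q^2}(T)$, and these fields would have to coincide. Ordinary reduction also requires $\mathfrak{p}$ to split in $K$, yet a prime of even degree is inert in the constant extension $\F_{q^2}(T)$ only when its degree is odd. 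Thus for even degree $\mathfrak{p}$ splits in $\F_{q^2}(T)$, and I need to recheck which parity actually gives the contradiction. I expect the paper's criterion to say that $j=0$ reduces to supersingular exactly at primes of odd degree. The complement $S_0$ would then be where $0$ is ordinary, and a Deuring-type lifting argument would show that $j\equiv 0$ modulo a prime of even degree forces $K=\F_{q^2}(T)$ and $\mathcal{O}$ of bounded conductor at $\mathfrak{p}$.

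\textbf{Reduction to finitely many CM fields.} Concretely, if $\mathfrak{P}\mid j$ with $\mathfrak{p}$ of even degree and both reductions ordinary, then the canonical-lift uniqueness gives $\mathrm{End}$ equality, so $K\cong\F_{q^2}(T)$. Hence for $K\neq\F_{q^2}(T)$, $j$ is a genuine unit at all such primes. Combined with the primes outside $S_0$ being excluded by hypothesis, $j$ is a unit, and \cite{AABP26} gives finiteness. It remains to treat $K=\F_{q^2}(T)$ with orders $\F_q[T]+f\F_{q^2}[T]$. Here I would bound the $\mathfrak{p}$-adic valuation of $j$ via the difference of singular moduli $j-0$, using Gross--Zagier/Dorman-type factorization formulas. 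These show that primes dividing $j-j'$ satisfy splitting conditions relative to both orders, so $v_\mathfrak{P}(j)$ is bounded in terms of $\deg f$.

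\textbf{Height comparison.} Finally, the sum formula gives $h(j)=\sum_{\mathfrak{P}\mid j}(\text{local contributions})$ at the finite places, which is at most $O(\log|\Delta|)$ with a small constant. On the other side, the contribution at infinity, via $\log|j|_\infty\approx q^{\deg\Delta/2}$-type growth for the dominant conjugate, grows like $|\Delta|^{1/2}$. This yields a bound on $\deg\Delta$, and class number finiteness then gives finitely many $j$. The main obstacle is the $K=\F_{q^2}(T)$ case: making the valuation bound on $j$ at even-degree primes uniform in the conductor, which I would attack with the explicit factorization of $j$ over $\F_{q^2}$-orders.
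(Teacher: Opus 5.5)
Your handling of the case $K\neq\F_{q^2}(T)$ matches the paper's Proposition~\ref{notS0}, once the parity slip at the start of your ``local step'' is fixed. At a prime $\mathfrak{p}$ of even degree, $\mathfrak{p}$ splits in $\F_{q^2}(T)$, so $0$ reduces to an \emph{ordinary} invariant. If $\mathfrak{P}\mid j$, then both $\mathcal{O}_j$ and $\F_{q^2}[T]$ embed into the imaginary quadratic endomorphism algebra of this common reduction, which forces $K=\F_{q^2}(T)$. This step only uses injectivity of reduction on endomorphisms (Proposition~\ref{redtype}), not canonical-lift uniqueness. So for $K\neq\F_{q^2}(T)$ an $S_0$-unit is a unit, and \cite{AABP26} applies.

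The genuine gap is the case $K=\F_{q^2}(T)$. You leave it as ``the main obstacle'' and propose factorisation formulas for $j-0$ plus a height comparison. As written, that comparison is not substantiated. The $O(\log|\Delta|)$ bound on the finite contribution is not derived from anything, and a per-prime bound on $v_{\mathfrak{P}}(j)$ in terms of $\deg f$ does not control the sum over all primes and all conjugates.

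The missing idea is that this case is vacuous. Every prime of odd degree is inert in $\F_{q^2}(T)$. So by Proposition~\ref{redtype}, a singular modulus $j$ with CM by an order of $\F_{q^2}(T)$ has supersingular reduction above every degree-$1$ prime $\mathfrak{p}$. In characteristic $\mathfrak{p}$ with $\deg\mathfrak{p}=1$, the only supersingular invariant is $0$ \cite[Corollary 4.4.12]{Papi}. Hence $j\equiv 0$ modulo some $\mathfrak{P}\mid\mathfrak{p}$, so $j$ is divisible by a prime of odd degree and is never an $S_0$-unit (Proposition~\ref{not0bis}). Therefore every $S_0$-unit singular modulus is a unit, and \cite[Theorem 1.2]{AABP26} gives finiteness with no height estimates at all.
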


The fact that $S_{0}$ is infinite is striking, and is in clear analogy with the results of Campagna \cite{Cam21, Cam23}.
Theorem \ref{finiteness} is obtained by reducing the question (via Theorem \ref{dichotomy} and Theorem \ref{bijection} below) to the finiteness of the set of CM units, which allows us to apply the result obtained in \cite{AABP26}.
In analogy with the study of singular moduli of elliptic curves over number fields, the present article plays the role of \cite{Cam21} vis-a-vis \cite{BHK20}.
Whether or not the set of Drinfeld singular moduli that are $S$-units (for an arbitrary finite set of places $S$) is finite is still an open question.
This might require the study of equidistribution properties of Drinfeld modules, and will be the object of future work.

Drinfeld singular moduli that are $S_{0}$-units also satisfy the following property, which is obtained as a consequence of Brown's Theorem 2.8.2 \cite{Bro92}.

\begin{ipropo}\label{conditions}
Let $q$ be a prime power such that $q\equiv 1 \pmod{4}$.
There is no Drinfeld singular modulus $j$ associated with a quadratic order of prime discriminant $D$ which is of odd degree, such that $j$ is also an $S_{0}$-unit.
\end{ipropo}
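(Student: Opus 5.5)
The plan is to argue by contradiction and to control the prime factorisation of the norm of $j$ using Brown's Theorem 2.8.2 \cite{Bro92}, which we view as a function field analogue of the Gross--Zagier factorisation of norms of differences of singular moduli. So suppose $j$ is a Drinfeld singular modulus attached to an order of prime discriminant $D$ with $\deg D$ odd, and that $j$ is an $S_0$-unit. The hypothesis that $\deg D$ is odd means that $\infty$ ramifies in $K=\F_q(T,\sqrt{D})$, so $K$ is an imaginary quadratic field and $j$ is a genuine CM $j$-invariant. Since $D$ is prime, this order is the maximal order $\O_K$. Recall also that $j$ is integral over $\F_q[T]$.

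The first key step is to identify $0$ as a Drinfeld singular modulus. It is the $j$-invariant attached to the constant field extension $\F_{q^2}(T)$, that is, to a discriminant $D_0\in\F_q^\times$ which is a non-square. We then read $j=j-0$ as a difference of two singular moduli, with discriminants $D$ and $D_0$.

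Brown's theorem, as we plan to invoke it, says the following. The norm $N_{H/\F_q(T)}(j)$, with $H$ the Hilbert class field of $K$, is (up to a constant in $\F_q^\times$) an explicit product of primes $\mathfrak p$. Each such $\mathfrak p$ is non-split in both $K$ and $\F_{q^2}(T)$, i.e.\ $\mathfrak p$ is a prime at which the Drinfeld modules with CM by $\O_K$ and by $\F_{q^2}[T]$ both have supersingular reduction and become isomorphic. A prime $\mathfrak p$ is non-split in $\F_{q^2}(T)$ exactly when $\deg\mathfrak p$ is odd. Hence every prime dividing $j$ lies above a prime of odd degree, i.e.\ outside $S_0$. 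The $S_0$-unit assumption therefore forces $j$ to be a unit in $\overline{\F}_q[T]$, so its norm is a constant.

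The second key step, which I expect to be the main obstacle, is to extract from Brown's explicit formula that $N(j)$ is not a constant when $D$ is prime of odd degree and $q\equiv1\pmod 4$. I will first try to show that the ramified prime $(D)$ itself occurs in the factorisation with positive exponent. Since $\deg D$ is odd, $(D)$ is inert in $\F_{q^2}(T)$ and ramified in $K$, so it is a legitimate candidate. The exponent is given by a count of optimal embeddings of $\O_K$ into the relevant maximal order of the definite quaternion algebra ramified at $(D)$ and $\infty$. The condition $q\equiv1\pmod 4$ should be exactly what makes $-1$ a square in $\F_q$. This in turn should control the local symbol at $(D)$ and guarantee that the embedding count is positive.

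If this direct approach fails, the fallback is to compare degrees. The degree of $N(j)$ equals the weighted sum of degrees in Brown's formula. One can also compute it from the $\infty$-adic size of $j$, which is $|j|_\infty=q^{q\deg D}$-type growth coming from the analytic expansion of $j$ at the cusp. Since this degree is strictly positive, $N(j)$ is not a constant. Either route contradicts the conclusion that $j$ is a unit, and this proves Proposition~\ref{conditions}.
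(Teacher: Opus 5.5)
Your first step cites Brown's Theorem 2.8.2 (Theorem \ref{Bro} above) for something it does not say. That theorem is not a Gross--Zagier-type factorisation of $N(j)$ into primes; it computes the $\infty$-adic absolute value $|j(z)|$. The conclusion you draw is nevertheless true and is in the paper. If $\mathfrak{P}\mid j$ lies over $\mathfrak p$, apply Theorem \ref{dichotomy} to $j_1=j$, $j_2=0$ (this is Proposition \ref{notS0}). The alternative $K=\F_{q^2}(T)$ is excluded because $\infty$ ramifies in $K$. So $\mathfrak p$ is non-split in $\F_{q^2}(T)$, hence of odd degree. The reason you give in passing (isomorphic reductions, supersingular iff non-split, Proposition \ref{redtype}) is the correct one. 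Hence an $S_0$-unit $j$ must be a unit.

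Your ``direct approach'' via optimal embeddings is not an argument. Nothing you wrote shows that the exponent of $(D)$ is positive, and the suggested link between $q\equiv 1\pmod 4$ and a local symbol at $(D)$ is speculation. Discard it.

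Your fallback does work once it uses what Brown's theorem actually says. Every $k$-conjugate $\sigma(j)$ is again a singular modulus with CM by the same order. Case 1 of Theorem \ref{Bro} therefore gives $\log_q|\sigma(j)|=\frac{q+1}{2}q^{n_\sigma}$ with $n_\sigma\geq 1$. This is not ``$q^{q\deg D}$'' growth; only positivity matters, but you must assert it for all conjugates, not just for $j$. Then $\deg N_{k(j)/k}(j)=\sum_\sigma\log_q|\sigma(j)|>0$, so $j$ is not a unit, which is the contradiction.

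This is a genuinely different route from the paper's. The paper does not use Proposition \ref{notS0} here; it argues by parity. The norm of an $S_0$-unit has even degree. Brown's formula makes $\deg N(j)$ a sum of $h$ terms $\frac{q+1}{2}q^{n}$, and this sum is odd for two reasons:
\begin{enumerate}
\item $h$ is odd by genus theory, which is where ``$D$ prime of odd degree'' enters;
\item $\frac{q+1}{2}$ is odd, which is where $q\equiv 1\pmod 4$ enters.
\end{enumerate}
Your route replaces this parity count with the unit reduction of Proposition \ref{notS0} plus mere positivity of $\log_q|\sigma(j)|$. Consequently it uses neither the primality of $D$ nor $q\equiv 1\pmod 4$. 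It shows that for every odd $q$, no Drinfeld singular modulus whose discriminant has odd degree is an $S_0$-unit.
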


The discrepancy between the elliptic curve case (no singular modulus is a unit, \cite{BHK20}) and the Drinfeld module case (for each $q\geqslant2$, at most finitely many Drinfeld singular moduli can be units, \cite{AABP26}) comes in part from the fact that there are infinitely many possible base fields $\F_q$, as opposed to the unique field of rational numbers $\mathbb{Q}$.
Proposition \ref{conditions} could be viewed as a first step towards bridging this gap.
Note that in the case where $q$ is even, using Theorem 5.2 in \cite{HsYu98} instead of \cite{Bro92}, one may perform a similar study, which leads to some weaker constraints, as collected in Remark \ref{char2}.
Moreover, when $q$ is odd, we show in Remark \ref{prime D odd degree} that there exists explicit Drinfeld singular moduli associated with prime $D$ of degree 1 which are not $S_0$-units.

Let us now introduce some vocabulary to help us present our next results, which are needed to obtain a proof of Theorem \ref{finiteness}.
Firstly, $A = \F_q[T]$, $k = \F_q(T)$, and a quadratic extension $K/k$ is called imaginary if it is not split at the infinite place $\infty=T^{-1}$.
Secondly, for every Drinfeld singular modulus $j$, we denote by $\mathcal{O}_{j}$ the endomorphism ring of any Drinfeld module $\Phi$ satisfying $j(\Phi)=j$.
We also denote by $K_j$ the fraction field of $\mathcal{O}_j$, which is an imaginary extension of $k$.

\begin{itheo}\label{dichotomy}
    Let $j_1\neq j_2$ be two Drinfeld singular moduli.
    Denote by $\mathfrak{c}_1$ and $\mathfrak{c}_2$ the conductors of $\mathcal{O}_{j_1}$ and $\mathcal{O}_{j_2}$ respectively.
    Suppose that $L$ is a function field containing $j_1$ and $j_2$, and let $\mathfrak{P}$ be a prime in $L$ above a prime $\mathfrak{p}$ in $k$.
    If $j_1\equiv j_2\mod\mathfrak{P}$ then either $K_{j_1}=K_{j_2}$ and $\mathfrak{p}$ divides $\mathfrak{c}_1\mathfrak{c}_2$, or $\mathfrak{p}$ is non-split in both $K_{j_1}$ and $K_{j_2}$.
\end{itheo}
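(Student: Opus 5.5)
We argue by reduction of Drinfeld modules, following the classical Deuring-type strategy for elliptic curves. For $i=1,2$, let $\Phi_i$ be a Drinfeld module of rank 2 with CM by $\mathcal{O}_{j_i}$ and $j(\Phi_i)=j_i$. After a finite extension of $L$ we may assume that both $\Phi_i$ are defined over $L$ with all endomorphisms defined over $L$, and have good reduction at $\mathfrak{P}$; this uses potential good reduction of CM Drinfeld modules, since the $j$-invariants are integral over $A$. Since $j_1\equiv j_2 \bmod \mathfrak{P}$, after a further extension the reductions $\bar\Phi_1$ and $\bar\Phi_2$ become isomorphic over $\overline{\mathbb{F}}_{\mathfrak{P}}$. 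We will consider separately the cases where this common reduction is ordinary and where it is supersingular.

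\emph{Supersingular case.} The endomorphism ring of $\bar\Phi_1\cong\bar\Phi_2$ is then a maximal order in the quaternion algebra over $k$ ramified exactly at $\mathfrak{p}$ and $\infty$. Each $\mathcal{O}_{j_i}$ embeds into it via the reduction map, so each $K_{j_i}$ embeds into a quaternion algebra ramified at $\mathfrak{p}$. Since a quadratic field splitting such an algebra must be non-split at every ramified place, $\mathfrak{p}$ is non-split in both $K_{j_1}$ and $K_{j_2}$. This gives the second alternative.

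\emph{Ordinary case.} The endomorphism algebra of the reduction is then an imaginary quadratic field $F$ in which $\mathfrak{p}$ splits: it contains the Frobenius, which generates the ideal $\mathfrak{P}'^{\,n}$ for a prime above $\mathfrak{p}$. Reduction is injective on endomorphisms, so $K_{j_1}\subseteq F$ and $K_{j_2}\subseteq F$, and both are quadratic over $k$. Hence $K_{j_1}=F=K_{j_2}=:K$, and $\mathfrak{p}$ splits in $K$.

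It remains to show that $\mathfrak{p}\mid\mathfrak{c}_1\mathfrak{c}_2$. The key input, analogous to Deuring's lifting theorem, is that for an ordinary reduction the map $\mathrm{End}(\Phi_i)\to\mathrm{End}(\bar\Phi_i)$ has image whose index is prime to $\mathfrak{p}$. Equivalently, the conductors of $\mathcal{O}_{j_i}$ and of $\mathrm{End}(\bar\Phi)$ agree away from $\mathfrak{p}$. I would prove this by looking at the $\mathfrak{p}$-torsion: the étale part of $\bar\Phi[\mathfrak{p}^n]$ has rank 1, and via Serre–Tate/Katz-style canonical lifting for ordinary Drinfeld modules, the endomorphisms of $\bar\Phi$ lift to the canonical lift, which is unique up to $\mathfrak{p}$-adic isomorphism. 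Applying this to both $i$, the conductors of $\mathcal{O}_{j_1}$ and $\mathcal{O}_{j_2}$ have the same prime-to-$\mathfrak{p}$ part, namely that of $\mathrm{End}(\bar\Phi)$.

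Now suppose $\mathfrak{p}\nmid\mathfrak{c}_1\mathfrak{c}_2$. Then $\mathcal{O}_{j_1}=\mathcal{O}_{j_2}=\mathrm{End}(\bar\Phi)=:\mathcal{O}$, and both $\Phi_i$ are lifts of $\bar\Phi$ with full endomorphism ring $\mathcal{O}$. The reduction map from CM modules by $\mathcal{O}$, indexed by $\mathrm{Pic}(\mathcal{O})$, to ordinary modules with endomorphism ring $\mathcal{O}$ over $\overline{\mathbb{F}}_\mathfrak{p}$ is injective, by Deuring's lifting and the uniqueness of canonical lifts. Therefore $\Phi_1\cong\Phi_2$, so $j_1=j_2$, a contradiction. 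Hence $\mathfrak{p}\mid\mathfrak{c}_1\mathfrak{c}_2$, which is the first alternative.

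The main obstacle is this injectivity and canonical-lift statement in the Drinfeld setting. I would try to cite the Drinfeld-module version of Deuring reduction/lifting (as in Gekeler or Hayes) rather than reprove it. Failing that, I would argue via the action of $\mathrm{Pic}(\mathcal{O})$ and the fact that distinct ideal classes give non-isomorphic $\mathfrak{p}$-adic Tate modules together with the étale part.
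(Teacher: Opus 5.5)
Your skeleton matches the paper's. You split according to whether the common reduction is supersingular or ordinary; the paper packages both directions as Proposition \ref{redtype}, item 1, whereas you rederive them from the ramification of the quaternion algebra and the splitting of Frobenius. You then identify $K_{j_1}=K_{j_2}$ as the fraction field of $\mathrm{End}(\overline{\Phi})$, compare conductors, and conclude by injectivity of reduction on moduli with CM by a $\mathfrak{p}$-maximal order.

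The genuine difference is in how you get that injectivity. The paper proves Theorem \ref{bijection} by counting. Deuring lifting (Cojocaru--Papikian) gives surjectivity onto the ordinary invariants with endomorphism ring $\mathcal{O}$. Proposition \ref{CMbijection} together with Yu's simple transitive action of $\mathrm{Cl}(\mathcal{O})$ shows both sides have $\#\mathrm{Cl}(\mathcal{O})$ elements, so no deformation theory is needed. Your route through uniqueness of canonical lifts gives a more local statement, but it needs a Serre--Tate theory for Drinfeld modules. It also has to use $\mathfrak{p}$-maximality explicitly. Your $\Phi_i$ live over a possibly ramified extension of $A_{\mathfrak{p}}$, where quasi-canonical lifts (CM by orders of conductor divisible by $\mathfrak{p}$) exist. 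It is the idempotents of $\mathcal{O}\otimes A_{\mathfrak{p}}\cong A_{\mathfrak{p}}\times A_{\mathfrak{p}}$, available exactly because $\mathfrak{p}\nmid\mathfrak{c}$, that force every lift with CM by $\mathcal{O}$ to be the canonical one.

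Two points need repair.

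First, ``index prime to $\mathfrak{p}$'' is backwards. The image of $\mathcal{O}_{j_i}$ in $\mathrm{End}(\overline{\Phi})$ has index a power of $\mathfrak{p}$: the conductors are $\mathfrak{p}^{n_i}\mathfrak{c}_i'$ versus $\mathfrak{c}_i'$. Your ``equivalently'' clause is the correct statement, and it is the one you actually use.

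Second, your canonical-lift sketch does not prove this conductor comparison, because $\Phi_i$ need not be the canonical lift when $\mathfrak{p}\mid\mathfrak{c}_i$. The standard argument uses $\mathfrak{l}$-torsion for $\mathfrak{l}=(\ell)\neq\mathfrak{p}$:
\begin{enumerate}
\item Suppose $\gamma\in\mathrm{End}(\overline{\Phi})$ satisfies $\gamma\overline{\Phi}_\ell=\overline{\alpha}$ with $\alpha\in\mathcal{O}_{j_i}$. Then $\overline{\alpha}$ kills $\overline{\Phi}[\ell]$.
\item Reduction is bijective on the \'etale group $\Phi[\ell]$, so $\alpha$ kills $\Phi[\ell]$.
\item Hence $\alpha=u\Phi_\ell$ for some $u$, and $u$ commutes with $\Phi_T$ by right cancellation of $\Phi_\ell$.
\item So $u\in\mathrm{End}(\Phi)$ and $\overline{u}=\gamma$, meaning $\gamma$ already lies in the image.
\end{enumerate}
Alternatively, simply cite it, as the paper does via Proposition \ref{redtype}, item 2.
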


We denote by $\mathcal{J}$ the collection of all Drinfeld singular moduli.
For $\mathfrak{p}$ a finite prime in $k$, we let $\mathcal{J}_\mathfrak{p}\subseteq\mathcal{J}$ be the set of Drinfeld singular moduli $j$ for which $\mathfrak{p}$ splits in $\mathcal{O}_j$.
This condition is equivalent to: $\mathfrak{p}$ splits in $K_j$ and $\mathfrak{p}$ does not divide the conductor of $\O_j$.

Fix a prime $\overline{\mathfrak{P}}$ in $\overline{k}$ above $\mathfrak{p}$ and identify the residue field at $\overline{\mathfrak{P}}$ with $\overline{\F}_\mathfrak{p}$.
Since Drinfeld singular moduli are integral over $A$, reduction mod $\overline{\mathfrak{P}}$ induces a map $\mathrm{red}\colon\mathcal{J}\mapsto \overline{\F}_\mathfrak{p}$.
The definition of ordinary is recalled below in Definition \ref{ord&sup}.
Let us state the following result first published in \cite{BaKo92}.

\begin{itheo}\label{bijection}(Bae and Koo)
    When restricted to $\mathcal{J}_\mathfrak{p}$, the reduction map $\mathrm{red}$ induces a bijection between $\mathcal{J}_\mathfrak{p}$ and the set of ordinary invariants in $\overline{\F}_\mathfrak{p}$. 
\end{itheo}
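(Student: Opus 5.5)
This is the Drinfeld analogue of Deuring's lifting theorem. The plan is to prove well-definedness, injectivity and surjectivity separately, working over $A_{\mathfrak{p}}$-integral models and using the Deuring--Gekeler description of endomorphism rings over finite fields.

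\emph{Image lies in the ordinary locus.} Let $j\in\mathcal{J}_\mathfrak{p}$, and let $\Phi$ be a CM Drinfeld module with $j(\Phi)=j$, defined over a finite extension $L$ of $K_j$ and with good reduction at the prime $\mathfrak{P}$ below $\overline{\mathfrak{P}}$. We may arrange good reduction after a finite extension, since $j$ is integral. Write $\mathfrak{p}\O_j=\mathfrak{q}\bar{\mathfrak{q}}$. Choose $n$ so that $\mathfrak{q}^n=(\pi)$ is principal in $\O_j$. Then $\Phi_\pi$ has kernel of $\mathfrak{p}$-torsion type "one half" of $\Phi[\mathfrak{p}^n]$. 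Reduce modulo $\mathfrak{P}$: the reduction $\bar\Phi$ has height $h$ with $1\le h\le 2$. If $\bar\Phi$ were supersingular ($h=2$), then $\bar\Phi[\mathfrak{p}]$ would be connected, so both $\Phi_\pi$ and $\Phi_{\bar\pi}$ would reduce to purely inseparable isogenies. Comparing $\mathfrak{p}$-adic valuations of the constant terms would then give $\pi\bar\pi$-divisibility contradictions, because $\pi$ and $\bar\pi$ are coprime. Concretely, the constant term of $\Phi_\pi$ is $\pi$, so $\Phi_\pi$ reduces to a separable-at-$\bar{\mathfrak{q}}$ map. Hence $\bar\Phi$ is ordinary.

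\emph{Injectivity.} An ordinary Drinfeld module $\bar\Phi$ over $\overline{\F}_\mathfrak{p}$ has endomorphism ring an order $\O'$ in an imaginary quadratic field in which $\mathfrak{p}$ splits, with $\mathfrak{p}\nmid$ conductor. This follows from Gekeler's results over finite fields: the Frobenius generates a field split at $\mathfrak{p}$ in the ordinary case. Reduction gives an injection $\O_j\hookrightarrow\mathrm{End}(\bar\Phi)$. The key step is showing $\mathrm{End}(\bar\Phi)=\O_j$. The index is prime to $\mathfrak{p}$ because $\O_j$ is maximal at $\mathfrak{p}$. For primes $\ell\neq\mathfrak{p}$ I would use the Tate module: reduction is an isomorphism on $\ell$-power torsion, which forces the $\ell$-adic endomorphism rings to coincide. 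So $\mathrm{End}(\bar\Phi)=\O_j$. Then, if $j_1,j_2\in\mathcal{J}_\mathfrak{p}$ have the same reduction, they share the order $\O$. The $j$'s with CM by $\O$ form one $\mathrm{Pic}(\O)$-torsor via $\mathfrak{a}*\Phi$, compatible with reduction. The action of $\mathrm{Pic}(\O)$ on ordinary $\bar\Phi$ with $\mathrm{End}=\O$ is free; the lifting of the relevant isogeny gives that an isomorphism $\overline{\mathfrak{a}*\Phi}\cong\bar\Phi$ makes $\mathfrak{a}$ principal in $\mathrm{End}(\bar\Phi)=\O$. So $j_1=j_2$.

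\emph{Surjectivity.} This is the main obstacle: every ordinary $\bar\Phi$ over a finite field must lift to a CM module. I would first try a Deuring-style counting argument. Let $\O=\mathrm{End}(\bar\Phi)$, whose properties were established above. The ordinary invariants with endomorphism ring $\O$ form a $\mathrm{Pic}(\O)$-torsor as well, by the Gekeler/Yu classification of isogeny classes over finite fields. The injective, $\mathrm{Pic}(\O)$-equivariant reduction map from CM-by-$\O$ invariants, which are nonempty and $|\mathrm{Pic}(\O)|$ in number, then lands in a single orbit of equal size, hence is onto that orbit. To see that the orbit is exactly the given one (that is, that all orbits are hit), note that the isogeny class is determined by the Frobenius. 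Any two ordinary modules with the same $\mathrm{End}$ order $\O$ inside the same field are connected by an $\O$-ideal. This reduces to the equivariance statement. Alternatively, use the canonical lift (Serre--Tate for formal $A_\mathfrak{p}$-modules), where lifting all endomorphisms is automatic for ordinary modules.
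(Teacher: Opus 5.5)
Your strategy works, but it runs in the opposite direction from the paper's. The paper cites Proposition~\ref{redtype} for the ordinarity and the endomorphism ring of the reduction, which you re-derive (your constant-term argument for ordinarity is fine). It then obtains \emph{surjectivity} from the Deuring-type lifting theorem of Cojocaru--Papikian: lifting $(\overline{j},\alpha_0)$ with $\mathcal{O}_{\overline{j}}=A[\alpha_0]$ forces $\mathcal{O}_j=\mathcal{O}_{\overline{j}}$. Finally it gets \emph{injectivity} by counting over each fixed order $\mathcal{O}$, via Proposition~\ref{CMbijection} and Theorem~\ref{residualordinarylocus}.

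You prove injectivity directly, from the equivariance $\overline{\mathfrak{a}*\Phi}\cong\mathfrak{a}*\overline{\Phi}$ and freeness of Yu's action, and deduce surjectivity by counting; your canonical-lift fallback is essentially the paper's route. Your direction avoids any lifting theorem, but the counting must now bound the target from above. You need at most $\#\mathrm{Cl}(\mathcal{O})$ ordinary invariants in $\overline{\F}_\mathfrak{p}$ with endomorphism ring $\cong\mathcal{O}$, i.e.\ transitivity on \emph{all} of them. In the paper's direction, surjectivity supplies that bound and only freeness is really used.

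Yu's transitivity holds inside an isogeny class, so you must show that any two such modules are isogenous, and ``the isogeny class is determined by the Frobenius'' does not yet do it. The missing step is this. Over a common finite field of degree $m$ over $\F_\mathfrak{p}$, each Frobenius generates the $m$-th power of a prime of $\mathcal{O}_K$ above $\mathfrak{p}$; this is where ordinarity enters. So after identifying both endomorphism rings with $\mathcal{O}$, the two Frobenii agree up to conjugation and a constant unit. Hence they have equal characteristic polynomials over a further finite extension, and the modules are isogenous there.

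Two smaller repairs are needed. For the equivariance, choose $\mathfrak{a}$ in its class prime to $\mathfrak{p}$. Then $\Phi[\mathfrak{a}]$ is killed by an element of $A$ prime to $\mathfrak{p}$, hence is \'etale at $\mathfrak{P}$, and the monic isogeny with kernel $\Phi[\mathfrak{a}]$ is integral and reduces to the one with kernel $\overline{\Phi}[\mathfrak{a}]$.

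For the Tate-module step, an isomorphism $T_\ell\Phi\cong T_\ell\overline{\Phi}$ by itself forces nothing: a non-CM module with good ordinary reduction has one too. What works is that $T_\ell\Phi\cong\Lambda\otimes_A A_\ell$ with $\Lambda$ a proper, hence locally principal, $\mathcal{O}_j$-ideal. So the only elements of $K\otimes_A A_\ell$ preserving it lie in $\mathcal{O}_j\otimes_A A_\ell$. Meanwhile $\mathrm{End}(\overline{\Phi})$, an order of $K$ containing $\mathcal{O}_j$, acts on $T_\ell\overline{\Phi}=T_\ell\Phi$ by multiplication. Alternatively, just cite item~\ref{redtype2} of Proposition~\ref{redtype}, as the paper does.
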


We include in the sequel a detailed proof of this result.
Theorem \ref{bijection} is used in the proof of Theorem \ref{dichotomy}, which in turn is used in the proof of Theorem \ref{finiteness}. 
The surjectivity of the reduction map follows from a function field version of Deuring's Lifting Theorem, obtained as Theorem 7 in \cite{CojocaruPapikian}.
The injectivity is obtained by considering a simple and transitive action from the class group of associated CM orders, recalled below in Theorem \ref{residualordinarylocus}, and obtained in \cite{Yu}.
Theorem \ref{bijection} and its proof also have links with the work \cite{KaremakerKatenPapikian}, see in particular their section 7, where the authors explain the main ideas for lifting ordinary Drinfeld modules.
Let us also mention that our Proposition \ref{notS0}, used in the proof of Theorem \ref{finiteness}, generalises Proposition 5.6 of \cite{Dorman} (the latter being restricted to odd characteristic and maximal orders).

We also dedicate part of this study to the explicit computations of Drinfeld singular moduli.
This leads naturally to an algorithm used to compute the Hilbert modular polynomial associated with an order $\mathcal{O}$, which we denote by $H_\mathcal{O}$.
Inspired by work of Dummit and Hayes \cite{DuHa94} and by Maciak's \cite[Theorem 4.3.3.]{Mac10}, we prove the following theorem.

\begin{itheo}\label{algorithm}
    Let $\mathcal{O}$ be an $A$-order in a quadratic imaginary extension $K$ of $k$.
    There is an effective algorithm to find a polynomial $P_\mathcal{O}\in \mathcal{O}_K[X]$ satisfying the following properties:
    \begin{enumerate}
        \item The roots of $P_\mathcal{O}$ are the Drinfeld singular moduli with CM by an order containing $\mathcal{O}$.
        \item If the extension $K/k$ is separable (in particular, if $q$ is odd), the coefficients of $P_\mathcal{O}$ are in $A$.
        Moreover,
        \begin{equation}\label{sep product formula}
            P_\mathcal{O}(X)=\prod_{\mathcal{O}\subseteq\mathcal{O}'}H_{\mathcal{O}'}^{r(\mathcal{O}')}(X),
        \end{equation}
        for some positive integers $r(\mathcal{O}')$.
        \item If the extension $K/k$ is inseparable, the characteristic is even and $P_\mathcal{O}^2$ has coefficients in $A$.
        Moreover,
        \begin{equation}\label{insep product formula}
            P_\mathcal{O}^2(X) = \prod_{\mathcal{O}\subseteq\mathcal{O}'} H_{\mathcal{O}'}^{r(\mathcal{O}')}(X),
        \end{equation}
        for some positive integers $r(\mathcal{O}')$.
    \end{enumerate}
In addition, $r(\mathcal{O})=1$ for any maximal order.
In particular, if $\mathcal{O}$ is maximal and we are in the separable case, $P_\mathcal{O}$ is the Hilbert class polynomial $H_\mathcal{O}$.
If $\mathcal{O}$ is maximal and we are in the inseparable case, $P_\mathcal{O}^2$ is the Hilbert class polynomial $H_\mathcal{O}$.
\end{itheo}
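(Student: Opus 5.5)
The construction follows the Dummit--Hayes method. Fix a generator: write $\mathcal{O}=A[\omega]$, where $\omega$ has minimal polynomial $X^2-bX+c$ with $b,c\in A$. A rank 2 Drinfeld module $\Phi_T=T+g\tau+\Delta\tau^2$ has CM by an order containing $\mathcal{O}$ exactly when there is an endomorphism $\Phi_\omega=\omega+\sum_{i=1}^{2d}a_i\tau^i$ with $d=\deg_T\omega$ (more precisely, $\deg_\tau\Phi_\omega$ is fixed by $\deg$ of the norm of $\omega$) satisfying two conditions:
\[
\Phi_\omega\Phi_T=\Phi_T\Phi_\omega,\qquad \Phi_\omega^2-\Phi_b\Phi_\omega+\Phi_c=0 .
\]
After normalising the isomorphism class by putting $\Delta=1$ (or $g=1$, in the case $j\neq0$ handled separately), comparing coefficients of $\tau^i$ gives a finite system of polynomial equations over $\mathcal{O}_K$ in the unknowns $g,a_1,\dots,a_{2d}$. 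The commutation relation determines the $a_i$ recursively from $g$: the $\tau^i$ coefficient reads $a_i(T^{q^i}-T)=(\text{polynomial in } g,a_{<i})$. Substituting back leaves finitely many polynomial conditions in $g$ alone. Taking a gcd (or resultant) of these produces a polynomial in $g$, and hence, via $j=g^{q+1}/\Delta$, a polynomial $P_\mathcal{O}(X)$ obtained by eliminating $g$ through a resultant with $X\Delta-g^{q+1}$. This elimination is effective, being Gröbner bases or resultants over $\mathcal{O}_K$. Property (1) then follows from two facts. First, every root of $P_\mathcal{O}$ gives a Drinfeld module with $\mathcal{O}\subseteq\mathrm{End}$. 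Second, every such module is defined over $\overline{k}$ and is isomorphic to a normalised one, which solves the system.

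For (2), note that the system depends on $\omega$ only through its coefficients together with $\omega$ itself. The nontrivial element of $\mathrm{Gal}(K/k)$ sends $\omega$ to $\bar\omega=b-\omega$, and $\Phi_{\bar\omega}=\Phi_b-\Phi_\omega$ is again an endomorphism. Hence the solution set in $j$ is Galois stable, and after choosing $P_\mathcal{O}$ canonically (for example, monic with root multiplicities taken from the scheme structure) its coefficients are fixed by $\mathrm{Gal}(K/k)$. They are therefore in $k$, and in fact in $A$ because singular moduli are integral. The product formula holds for two reasons: the roots are exactly the $\mathrm{Gal}(\overline{k}/k)$-orbits of singular moduli for the overorders $\mathcal{O}'\supseteq\mathcal{O}$, and $H_{\mathcal{O}'}$ is the minimal polynomial of each such orbit (CM theory: $\mathrm{Pic}(\mathcal{O}')$ acts transitively). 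For (3), $K/k$ is purely inseparable, so Galois descent fails. Instead, the Frobenius $x\mapsto x^2$ maps $K$ into $k$. Squaring the coefficients therefore lands in $k$, and again in $A$ by integrality, and the same orbit argument gives the factorisation of $P_\mathcal{O}^2$.

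The main obstacle is the claim $r(\mathcal{O})=1$ for maximal $\mathcal{O}$, i.e.\ that the elimination produces simple roots at the singular moduli with CM by exactly $\mathcal{O}_K$. I would prove this by the universality argument announced in the abstract. Over the formal/universal deformation of a Drinfeld module with CM by $\mathcal{O}_K$, the locus where the endomorphism $\omega$ extends is reduced of the expected dimension zero. This follows from rigidity: an endomorphism is determined by its constant term, and the tangent map of the equation $\Phi_\omega\Phi_T=\Phi_T\Phi_\omega$ in $g$ has nonzero derivative, computed as $\partial/\partial g$ of the first nontrivial coefficient equation, which equals a unit times $(\omega^{q}-\omega)$. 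This quantity is nonzero when $K/k$ is separable. In the inseparable case the derivative vanishes identically, and a double root must appear, which explains why $P_\mathcal{O}^2$ rather than $P_\mathcal{O}$ equals $H_\mathcal{O}$. The delicate point is checking that the chosen elimination procedure (gcd rather than, say, a resultant, which could introduce spurious multiplicities) preserves this reducedness. I would treat this by defining $P_\mathcal{O}$ directly as the norm-form of the reduced zero-dimensional scheme.
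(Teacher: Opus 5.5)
Your handling of items 1--3 is essentially the paper's. It normalises the module, solves the commutation relation recursively for the coefficients of the isogeny with constant term $c_0$, and takes the gcd of the leftover equations. It then descends from $K$ to $k$ via the conjugate $\bar\omega=b-\omega$ (the paper phrases this as additivity of the leftover polynomials in $c_0$ plus their vanishing at $c_0=b\in A$), and squares in the inseparable case. The gap is in the only non-formal claim, $r(\mathcal{O}_K)=1$.

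Your tangent computation does not bear on it. With $\Delta=1$ the $\tau^1$-equation is $a_1(T^q-T)=g(\omega^q-\omega)$. Like every equation in degree $1\le i\le n$, it just solves for the new unknown $a_i$, whose coefficient $T^{q^i}-T$ is invertible. These equations therefore cut out a curve isomorphic to the $g$-line; that is all ``an endomorphism is determined by its constant term'' gives you. Every constraint on $g$ comes from the coefficients of $\tau^{n+1}$ and $\tau^{n+2}$ restricted to that curve. Whether their common zeros are simple is the whole question, and nothing in your argument addresses it.

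Your explanation of the inseparable case is also wrong. For $\omega=f\sqrt{T}$ one has $\omega^q=f^qT^{q/2}\in k$, so $\omega^q-\omega\neq0$. Moreover, if the CM locus over $K$ were non-reduced, $P_{\mathcal{O}_K}$ would have degree $>h$. Then $P_{\mathcal{O}_K}^2$ would have degree $>2h=[k(j):k]=\deg H_{\mathcal{O}_K}$, contradicting the statement you want. In fact $P_{\mathcal{O}_K}=\mathrm{irr}(j,K)$ is squarefree, and the square merely brings its coefficients from $K$ down to $k$.

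The elimination step has two further problems. First, eliminating $g$ by a resultant with $X-g^{q+1}$ counts each $j\neq0$ with multiplicity at least $q+1$, since all $q+1$ solutions of $g^{q+1}=j$ give isomorphic modules $T+g\tau+\tau^2$. So the algorithm you describe cannot have $r(\mathcal{O}_K)=1$; the paper avoids this by normalising $g=1$, so that the unknown is $j^{-1}$ itself. Second, declaring $P_\mathcal{O}$ to be the norm form of the reduced scheme makes $r(\mathcal{O}')=1$ hold for every order by fiat. That trivialises the final assertion rather than proving it (compare Remark~\ref{rem:r=1}, where the non-maximal case is left open); the content is that the gcd construction itself outputs $H_{\mathcal{O}_K}$.

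The missing idea is that rank-one Drinfeld $\mathcal{O}_K$-modules over $K$-algebras are classified by a reduced (\'etale) object. The paper uses this as follows.
\begin{enumerate}
\item Over $R=K[X]/P_{\mathcal{O}_K}(X)$, which may be non-reduced, the maps $T\mapsto T+\tau+x^{-1}\tau^2$ and $c_0\mapsto\phi$ define a rank-one Drinfeld $\mathcal{O}_K$-module, because $P_{\mathcal{O}_K}$ divides both leftover equations.
\item When $K$ has a degree-one prime, Hayes's theorem says this module is equivalent to the pull-back of the universal module over $\mathcal{O}_H$.
\item In general, one first base-changes to $R'=R[Z]/\Phi_p(Z)$, which trivialises $\Phi[\mathfrak{p}\mathcal{O}_K]$ for a prime $\mathfrak{p}$ split in $K$. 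One then uses Drinfeld's representability of $F^1_I$ by $K_{I,\infty}$.
\item Either way, $x=\rho(j(\widetilde\Phi))$ is a root in $R\subseteq R'$ of the squarefree minimal polynomial over $K$ of a singular modulus with CM by $\mathcal{O}_K$. Hence $P_{\mathcal{O}_K}$ divides that polynomial, and $r(\mathcal{O}_K)=1$.
\end{enumerate}
A genuine deformation-theoretic proof that rank-one modules in generic characteristic have no first-order deformations would be a legitimate alternative. It must, however, be carried out for the module as a whole, not read off from the $\tau^1$-coefficient.
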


The core of the proof is of algorithmic nature and stems from Subsection \ref{subsecalgo}. To conclude that $r(\mathcal{O})=1$ when $\mathcal{O}$ is maximal, we adapt an argument from earlier work of Dummit and Hayes, which relies on the existence of a universal Drinfeld $\mathcal{O}$-module of rank $1$. When $K$ has a degree $1$ prime, a result of Hayes (see Theorem \ref{universality}) guarantees the existence of such an object by working with the notion of \textit{equivalence classes} of Drinfeld modules. For general $K$, we use that such a universal object exists after adding a certain level structure (see Theorem \ref{universalitywithlevel}). We review these results in Section \ref{subsecuniv} after having introduced Drinfeld $\mathcal{O}_K$-modules over $\mathcal{O}_K$-algebras and their group scheme of torsion points. One may suspect that $r(\mathcal{O})=1$ for any order $\mathcal{O}$; see our Remark \ref{rem:r=1} below.

Computing Hilbert modular polynomials is also a classical challenge for elliptic curves, with different successful methods: complex analytic (Enge \cite{Eng09}), $p$-adic (Br\"oker \cite{Bro08}), and CRT (Sutherland \cite{Sut11}).
We underline the fact that Theorem \ref{algorithm} is based on an idea relevant to function field arithmetic, and not inspired by these various existing methods for elliptic curves.

Finally, having the possibility to compute explicitly tables of Drinfeld singular moduli, we studied the following question of Dorman: in \cite{Dorman} page 251, he makes the observation that the norm of a singular modulus seem to always be monic.
In Section 2 of \cite{DuHa94}, Dummit and Hayes prove this fact in the case where the infinite place is ramified (\textit{i.e.} the degree of $D$, the discriminant of the singular modulus, is odd).
In Section \ref{computations} Figure \ref{fig:compu_q5n2}, we study explicit norms of Drinfeld singular moduli and we realize that Dorman's observation is in fact wrong in the inert case (\textit{i.e.} the degree of $D$ is even). Indeed, we have the following examples:
\begin{ipropo}\label{counterex}
    For $q=5$ and $D=2T^2+1$, $D = 2T^2+3$, and $D = 2T^2+4$, the norm of a singular modulus $j$ with discriminant $D$ is equal to $3(T+1)^{12}(T+4)^{12}$, $4T^{12}(T+1)^6(T+4)^6$, and $2(T+2)^{12}(T+3)^{12}$ respectively.
    In particular, the norm of $j$ is not monic in any of these cases. 
\end{ipropo}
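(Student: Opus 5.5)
This statement is a computation, so the plan is to produce each norm explicitly with the algorithm of Theorem \ref{algorithm}. For each of the three discriminants $D\in\{2T^2+1,\,2T^2+3,\,2T^2+4\}$ over $\F_5$, we first check that $D$ is squarefree. We then take $\mathcal{O}=A[\sqrt{D}]$. Since $\deg D=2$ is even and the leading coefficient $2$ is a non-square in $\F_5$, the infinite place is inert in $K=k(\sqrt{D})$, so $K$ is imaginary. Squarefreeness of $D$ makes $\mathcal{O}$ the maximal order $\mathcal{O}_K$. As $q=5$ is odd we are in the separable case, so part 2 of Theorem \ref{algorithm}, together with $r(\mathcal{O})=1$ for maximal orders, gives $P_{\mathcal{O}}=H_{\mathcal{O}}\in A[X]$. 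The norm $N_{K(j)/K}(j)$, or equivalently $N_{k(j)/k}(j)$ since $H_\mathcal{O}$ has coefficients in $A$, is then $(-1)^{h}H_{\mathcal{O}}(0)$, where $h=\deg H_{\mathcal{O}}$ is the class number of $\mathcal{O}_K$. So it suffices to compute $H_{\mathcal{O}}$, or at least its degree and constant term.

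The computation proceeds in four steps.

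\textbf{Step 1.} Compute $h$. For $\infty$ inert we have $h(\mathcal{O}_K)=2h(K)/(q+1)$ up to the standard unit index. We can obtain $h(K)$ from the $L$-polynomial of the genus-zero curve $y^2=D$ over $\F_5$, namely by counting points of $y^2=2x^2+c$ over $\F_5$ and $\F_{25}$. This gives the expected degree $h$ of $H_\mathcal{O}$, which serves as a consistency check.

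\textbf{Step 2.} Run the procedure of Subsection \ref{subsecalgo}. We parametrise rank-2 Drinfeld modules $\Phi_T=T+g\tau+\Delta\tau^2$ and impose that $\sqrt{D}$, a root of $X^2-D$ in $\mathcal{O}_K$, acts as an endomorphism $\Phi_{\sqrt{D}}=\sqrt{D}+a_1\tau+a_2\tau^2$. This requires $\deg_\tau=2$ since $\deg_T D=2$. The commutation relation $\Phi_{\sqrt D}\Phi_T=\Phi_T\Phi_{\sqrt D}$ and the relation $\Phi_{\sqrt D}^2=\Phi_D$ give polynomial equations in $g,\Delta,a_1,a_2$. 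After normalising, for instance $\Delta=1$ so that $j=g^{q+1}$, we eliminate the auxiliary variables with a resultant or Gröbner basis computation to obtain $P_\mathcal{O}(X)$.

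\textbf{Step 3.} Read off the constant term and factor it in $\F_5[T]$. The expected outcomes are $3(T+1)^{12}(T+4)^{12}$, $4T^{12}(T+1)^6(T+4)^6$ and $2(T+2)^{12}(T+3)^{12}$ respectively.

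\textbf{Step 4.} Observe that the leading coefficients $3,4,2$ are not $1$, so none of these norms is monic.

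Proposition \ref{notS0} and Theorem \ref{dichotomy} give an independent sanity check on Step 3. The primes dividing the norm must be those where $j$ has supersingular reduction, so they must be non-split in $K$ and of odd degree. For instance, for $D=2T^2+1$ we have $D(-1)=D(1)=3$, a non-square mod $5$, so $T\pm1$ are inert, consistent with the first case.

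The main obstacle is Step 2: carrying out the elimination correctly and making sure the output is exactly $H_\mathcal{O}$, with no extraneous factors from non-maximal orders and no multiplicity. This is guaranteed theoretically by $r(\mathcal{O})=1$, but in practice I would verify that the resulting degree equals $h$ from Step 1. I would also double-check the sign and the unit normalisation of $j$ (the choice $\Delta=1$), since the non-monic phenomenon is precisely about this leading constant. A mistake in normalisation would change the unit $3,4,2$, so I would cross-check by computing $j$ from a lattice or Hayes-module model in one case.
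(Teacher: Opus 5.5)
Your plan follows the paper's own route: the statement is read off from Figure~\ref{fig:compu_q5n2}, where the algorithm of Theorem~\ref{algorithm} applied to the maximal order $A[\sqrt{D}]$ (separable case, so $P_\mathcal{O}=H_\mathcal{O}$) gives a degree-$2$ polynomial whose constant term is the norm, and your normalisation $\Delta=1$ instead of the paper's $g=1$ only adds a final step converting from $g$ to $j=g^{q+1}$. One correction to your Step~1 consistency check: for $\infty$ inert the class number is $h(\mathcal{O}_K)=2h_K$ (here $h_K=1$ because $y^2=D$ has genus $0$, so $h=2$), not $2h(K)/(q+1)$, which would give $1/3$.
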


The article is organized as follows.
Section \ref{prelim} contains the basic definitions and facts needed in Section \ref{proofs}, where we gather the proofs of Theorem \ref{finiteness}, Proposition \ref{conditions}, Theorem \ref{dichotomy}, and Theorem \ref{bijection}.
Section \ref{computations} shows how to compute Drinfeld singular moduli explicitly by proving Theorem \ref{algorithm}, and gathers a few interesting explicit examples besides the one from Proposition \ref{counterex}.

\section{Preliminaries}\label{prelim}

Let $q\geqslant2$ be a prime power. Let $A=\F_q[T]$, let
$k=\F_q(T)$ be its quotient field, let $k_{\infty}=\F_q((1/T))$ be the completion of $k$ at the infinite place $\infty=1/T$,
and $\mathbb{C}_{\infty}$ be the completion of an algebraic closure of $k_{\infty}$.
If ${\mathcal K}$ is a subfield of $\mathbb{C}_\infty$, we denote by $\overline{{\mathcal K}}$ its algebraic closure in $\mathbb{C}_\infty$, and by ${\mathcal K}^{sep}$ its separable closure in $\overline{{\mathcal K}}$.

We denote by $\vert . \vert$ the absolute value on $\mathbb{C}_{\infty}$ normalized by $\vert T \vert=q$, and
for $z\in\mathbb{C}_{\infty}^{\times}$ we set $\deg z = \log_q\vert z\vert$, where $\log_q$ is the logarithm to the base $q$.
Thus, for any non-zero $a\in A$, we have that $\deg a$ is the degree of the polynomial $a$. 

Let $L$ be a field equipped with an $\F_q$-algebra morphism $\gamma\colon A\to L$.
A Drinfeld $A$-module $\Phi$ of rank $2$ over $L$ is given by a twisted polynomial \[\Phi_T=\gamma(T)+g\tau+\Delta\tau^2\in L\{\tau\},\] where $\Delta\neq0$.
A morphism (or isogeny) $\Phi\to\Phi'$ over $L$ is by definition a polynomial $u\in L\{\tau\}$ such that $u\Phi_T=\Phi'_Tu$.
We denote by $\mathrm{End}_L(\Phi)=\{u\in L\{\tau\}\mid u\Phi_T=\Phi_Tu\}$ the endomorphism ring of $\Phi$.

The $j$-invariant of $\Phi$ is then given by the quotient \[j(\Phi)=\frac{g^{q+1}}{\Delta}.\] It characterises the isomorphism class of the Drinfeld module $\Phi$ over $\overline{L}$.

As in the case of elliptic curves, Drinfeld modules of rank 2 over $\mathbb{C}_\infty$ can be described in terms of lattices as follows.
A rank 2 lattice $\Lambda\subseteq\mathbb{C}_\infty$ is an $A$-module of rank $2$.
Two lattices $\Lambda$ and $\Lambda'$ are \textit{equivalent} if $\Lambda'=c\Lambda$ for some $c\in\mathbb{C}_\infty^\times$.
The exponential function \[e_\Lambda(z)=z\prod_{\lambda\in\Lambda\smallsetminus\{0\}}\left(1-\frac{z}{\lambda}\right)\] induces an analytic isomorphism $\mathbb{C}_\infty/\Lambda\cong \mathbb{C}_\infty$ and there exists a unique polynomial \[\Phi_T^{\Lambda}=T+g(\Lambda)\tau+\Delta(\Lambda)\tau^2\] with coefficients in $\mathbb{C}_\infty$ such that multiplication by $T$ in $\mathbb{C}_\infty/\Lambda$ corresponds to the action of $\Phi_T^{\Lambda}$ on $\mathbb{C}_\infty$ under the previous isomorphism.
The assignment $\Lambda\mapsto \Phi_T^\Lambda$ induces a bijection between the set of equivalence classes of lattices and isomorphism classes of Drinfeld module of rank 2 over $\C_\infty$.

Every lattice of rank 2 in $\mathbb{C}_\infty$ is equivalent to a lattice of the form $\Lambda_z:=A+Az$ for some $z\in\mathbb{C}_\infty\smallsetminus k_\infty$ and $\Lambda_z$ is equivalent to $\Lambda_{z'}$ if and only if $z'=\frac{az+b}{cz+d}$ for some $\begin{psmallmatrix} a&b \\ c&d \end{psmallmatrix} \in \mathrm{GL}_2(A)$.
We put $\Phi_T^{(z)}:=\Phi_T^{\Lambda_z}$ and $j(z):=j\big(\Phi_T^{(z)}\big)$.

If $\Phi$ is a Drinfeld module over $\mathbb{C}_\infty$ isomorphic to $\Phi_T^\Lambda$, we identify $\mathrm{End}_{\mathbb{C}_{\infty}}(\Phi)$ with the subring
$\{\lambda\in\mathbb{C}_{\infty}\mid \lambda\Lambda\subset\Lambda\}$ of $\mathbb{C}_{\infty}$.
A Drinfeld module $\Phi$ of rank $2$ is said to have complex multiplication (or, in short, to be CM) if $\mathrm{End}_{\mathbb{C}_{\infty}}(\Phi)\not=A$.
In this case, the ring $\mathrm{End}_{\mathbb{C}_{\infty}}(\Phi)$ is an $A$-order in an imaginary quadratic extension of $k$; in particular, it is a free $A$-module of rank $2$.
We will say that $\alpha\in\mathbb{C}_{\infty}$ is a \emph{singular modulus} if there exists a CM Drinfeld module $\Phi$ of rank $2$ such that $j(\Phi)=\alpha$.
It is known (see \cite[Theorem 7.5.17, page 451]{Papi}) that Drinfeld singular moduli are integral over $A$.

Let $\mathcal{O}$ be an order in a quadratic imaginary extension $K/k$.
Recall that the class group $\mathrm{Cl}(\mathcal{O})$ is defined as the quotient between the group of proper fractional $\mathcal{O}$-ideals and the subgroup of principal $\mathcal{O}$-ideals.
Viewing $K\subseteq\mathbb{C}_\infty$, we can consider each $\mathcal{O}$-ideal $\mathfrak{a}$ as a rank $2$ lattice in $\mathbb{C}_\infty$. 

\begin{prop}\label{CMbijection}
    The assignment $\mathfrak{a}\mapsto\Phi^{\mathfrak{a}}$ induces a bijection between the ideal class group $\mathrm{Cl}(\mathcal{O})$ of $\mathcal{O}$ and the set of isomorphism classes of Drinfeld $A$-modules of rank $2$ over $\mathbb{C}_\infty$ having CM by $\mathcal{O}$.
\end{prop}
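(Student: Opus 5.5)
The plan is to reduce everything to the lattice dictionary recalled above: isomorphism classes of rank $2$ Drinfeld modules over $\mathbb{C}_\infty$ correspond to homothety classes of rank $2$ lattices, and $\mathrm{End}_{\mathbb{C}_\infty}(\Phi^\Lambda)=\{\lambda\in\mathbb{C}_\infty\mid\lambda\Lambda\subseteq\Lambda\}$. So it suffices to show that $\mathfrak{a}\mapsto\mathfrak{a}$ induces a bijection between $\mathrm{Cl}(\mathcal{O})$ and the set of homothety classes of lattices whose multiplier ring is exactly $\mathcal{O}$.

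Well-definedness and injectivity come first. A proper fractional $\mathcal{O}$-ideal $\mathfrak{a}\subseteq K\subseteq\mathbb{C}_\infty$ is a finitely generated torsion-free $A$-module of rank $2$. It is discrete in $\mathbb{C}_\infty$ because $K/k$ is imaginary, so $K\otimes_k k_\infty$ is a field and $\mathfrak{a}$ is discrete there. Hence $\mathfrak{a}$ is a rank $2$ lattice, and by properness its multiplier ring is $\{\lambda\in K\mid\lambda\mathfrak{a}\subseteq\mathfrak{a}\}=\mathcal{O}$. Any $\lambda\in\mathbb{C}_\infty$ with $\lambda\mathfrak{a}\subseteq\mathfrak{a}$ lies in $K$, since $\lambda a\in K$ for a nonzero $a\in\mathfrak{a}$. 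If $\mathfrak{b}=\beta\mathfrak{a}$ with $\beta\in K^\times$, the lattices are homothetic. Conversely, if $c\mathfrak{a}=\mathfrak{b}$ with $c\in\mathbb{C}_\infty^\times$, then $c=b/a\in K^\times$ for nonzero $a\in\mathfrak{a}$, $b=ca\in\mathfrak{b}$, so the two ideals have the same class. This gives a well-defined injection.

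For surjectivity, let $\Lambda$ be a lattice with multiplier ring $\mathcal{O}$. Rescale so that $1\in\Lambda$, which replaces $\Lambda$ by an equivalent $\Lambda_z=A+Az$ up to finite index. Pick $\alpha\in\mathcal{O}\setminus A$. Then $\alpha\cdot1\in\Lambda$, and $\Lambda\subseteq k\text{-span}$ of the $A$-basis. Writing $\alpha\omega_i=\sum m_{ij}\omega_j$ with $m_{ij}\in A$ shows that $\Lambda\otimes k$ is a one-dimensional $K$-vector space inside $\mathbb{C}_\infty$. Since $1\in\Lambda$, this space is $K$, so $\Lambda\subseteq K$. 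Thus $\Lambda$ is a finitely generated $\mathcal{O}$-submodule of $K$ of rank $2$ over $A$, i.e.\ a fractional $\mathcal{O}$-ideal, and it is proper because its multiplier ring is $\mathcal{O}$.

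The main obstacle I expect is the group-theoretic meaning of ``proper''. The paper defines $\mathrm{Cl}(\mathcal{O})$ using proper fractional ideals, and one must check that these coincide with the invertible ones so that $\mathrm{Cl}(\mathcal{O})$ is a group. I would handle this as in the classical quadratic case: an order in a quadratic extension is Gorenstein, which can be verified locally at each prime of $A$ since $\mathcal{O}=A[\alpha]$ locally is monogenic. Hence proper ideals are locally principal and invertible. The bijection itself only needs the set-level statement proved above.
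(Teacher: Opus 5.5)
Your proposal is correct and follows essentially the same route as the paper: the lattice dictionary, injectivity via $c=b/a\in K^\times$, and surjectivity by showing that a lattice normalized to contain $1$ (the paper's $\Lambda_z$) with multiplier ring $\mathcal{O}$ lies in $K$ and is a proper fractional $\mathcal{O}$-ideal. You also supply details the paper leaves implicit: discreteness of $\mathfrak{a}$, the inclusion $\Lambda\subseteq K$ (which also follows at once from writing $\alpha=a+bz$ with $b\neq 0$), and the Gorenstein remark ensuring $\mathrm{Cl}(\mathcal{O})$ is a group.
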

\begin{proof}
    Take two $\mathcal{O}$-ideals $\mathfrak{a}$ and $\mathfrak{b}$.
    If $\Phi^\mathfrak{a}$ and $\Phi^\mathfrak{b}$ are isomorphic, then $\mathfrak{b}=c\mathfrak{a}$ and necessarily $c\in K^\times$ so they define the same element in $\mathrm{Cl}(\mathcal{O})$.
    This shows that the assignment is injective.
    A Drinfeld module $\Phi\cong \Phi^{\Lambda_z}$ has CM by $\mathcal{O}$ if and only if $k(z)/k$ is a quadratic extension and
    $$\mathcal{O} = \{\lambda\in\mathbb{C}_{\infty}\mid \lambda\Lambda\subseteq\Lambda\} = \{\lambda\in k(z)\mid \lambda \Lambda_z\subseteq\Lambda_z\}.$$
    In particular, $\Lambda_z$ is a proper $\mathcal{O}$-ideal which implies surjectivity. 
\end{proof}

\begin{rem}
    Actually, \cite[Theorem 7.5.8]{Papi} and \cite[Remark 7.5.7]{Papi} show that there exists a simple and transitive action of $\mathrm{Cl}(\mathcal{O})$ over the set of isomorphism classes of Drinfeld A-modules of rank 2 over $\mathbb{C}_\infty$ having CM by $\mathcal{O}$.
    Having a bijection is enough for our purposes. 
\end{rem}

For $\mathfrak{P}$ be a finite prime of $L/k$, we denote the completion of $L$ at $\mathfrak{P}$ by $L_\mathfrak{P}$.
Let $\mathcal{O}_{L_\mathfrak{P}}$ be the ring of integers of $L_\mathfrak{P}$ and $\F_\mathfrak{P}$ the residue field at $\mathfrak{P}$.
If $\Phi$ is a Drinfeld module over $L/k$ such that $\Phi_T\in \mathcal{O}_{L_\mathfrak{P}}\{\tau\}$, we let $\overline{\Phi_T}\in \F_{\mathfrak{P}}\{\tau\}$ be the image of $\Phi_T$ under the reduction $\mathcal{O}_{L_\mathfrak{P}}\{\tau\}\to\F_{\mathfrak{P}}\{\tau\}$.

\begin{definition}(Definition 6.1.3 from \cite{Papi})
     A Drinfeld module $\Phi$ of rank $2$ defined over $L$ has good reduction at a finite prime $\mathfrak{P}$ of $L$ if there is a $c\in{L}_{\mathfrak{P}}^*$ such that $\Psi_T:=c^{-1} \Phi_T c$ belongs to $\mathcal{O}_{L_\mathfrak{P}}\{\tau\}$ and $\overline{\Psi}_T$ defines a Drinfeld module of rank $2$ over $\F_\mathfrak{P}$.
     We say that $\Phi$ has potentially good reduction at $\mathfrak{P}$ if there exists a finite extension $F/L$ and a prime $\mathfrak{P}'$ of $F$ above $\mathfrak{P}$ such that $\Phi$ has good reduction at $\mathfrak{P}'$ when considered as a Drinfeld module over $F$.
\end{definition}

If a CM Drinfeld module $\Phi$ of rank $2$ is defined over $L/k$, then $\Phi$ has potentially good reduction at every finite prime of $L$.
This follows from \cite[Proposition 4.2]{Gekeler} and the integrality of $j(\Phi)$.

\begin{definition}\label{ord&sup}
    A Drinfeld $A$-module $\overline{\Phi}$ of rank $2$ over a finite field $\F$ is said to be ordinary if $\mathrm{End}_{\overline{\F}}(\overline{\Phi})$ is an $A$-order in a quadratic imaginary field.
    If this is not the case, $\overline{\Phi}$ is said to be supersingular and $\mathrm{End}_{\overline{\F}}(\overline{\Phi})$ is an order inside a quaternion algebra over $k$.
\end{definition}

\begin{rem}
In view of \cite[Corollary 4.1.1]{Papi}, Definition \ref{ord&sup} is equivalent to  \cite[Definition 4.1.9]{Papi}. 
\end{rem}

\section{Gallery of proofs}\label{proofs}

Let $K/k$ be a quadratic extension and $\mathcal{O}\subseteq\mathcal{O}_K$ an order.
The \textit{conductor} of $\mathcal{O}$ is the unique ideal $\mathfrak{c}$ such that $\mathcal{O}=A+\mathfrak{c}\mathcal{O}_K$.

Let us start with a useful proposition.

\begin{prop}\label{redtype}
Let $\Phi$ be a Drinfeld module with CM by an order of conductor $\mathfrak{c}$ in $K$.
Assume that $\Phi$ is defined over $L/k$ and $\mathfrak{P}$ is a prime in $L$ of good reduction for $\Phi$.
Then the reduction map induces an injective homomorphism $\mathrm{End}_L(\Phi) \to \mathrm{End}_{\F_\mathfrak{P}}(\overline{\Phi})$.
Let $\mathfrak{p}=\mathfrak{P}\cap A$, then
\begin{enumerate}
    \item\label{redtype1} $\overline{\Phi}$ is supersingular if and only if $\mathfrak{p}$ does not split in $K$.
    \item\label{redtype2} Assume that $\mathfrak{p}$ splits in $K$ and write $\mathfrak{c}=\mathfrak{c}'\mathfrak{p}^{n}$ for some $n\geqslant0$ with $\mathfrak{c}$ coprime to $\mathfrak{p}$.
    Then $\overline{\Phi}$ is ordinary and it has CM by the order of conductor $\mathfrak{c}'$ in $K$.
\end{enumerate}
\end{prop}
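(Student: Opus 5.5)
Here is the plan. First I show that the reduction map on endomorphisms is injective. Then I settle supersingularity via the characteristic of the reduction. Finally, in the split case, I identify the reduced endomorphism ring by a conductor computation on Tate modules.

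\emph{Injectivity.} After replacing $\Phi$ by $c^{-1}\Phi c$, we may assume $\Phi_T\in\mathcal{O}_{L_\mathfrak{P}}\{\tau\}$ with good reduction. Any $u\in\mathrm{End}_L(\Phi)$ is integral over $A$, since $\mathrm{End}(\Phi)$ is an order. Comparing coefficients in $u\Phi_T=\Phi_Tu$, using that the leading coefficient $\Delta$ is a unit, shows that $u$ has $\mathfrak{P}$-integral coefficients. Hence reduction $u\mapsto\bar u$ is a ring homomorphism $\mathrm{End}_L(\Phi)\to\mathrm{End}_{\F_\mathfrak{P}}(\overline{\Phi})$. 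It is an $A$-algebra map between torsion-free $A$-modules, and its source is a domain of rank $2$ with fraction field $K$. So the kernel is an ideal of $\mathcal{O}$ meeting $A$ trivially, because $\bar\Phi_a\neq 0$ for $a\neq0$. It is therefore zero, since every nonzero ideal of an order meets $A$ nontrivially (it contains the norm of any of its elements).

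\emph{Part (1).} Enlarge $L$ if necessary so that all of $\mathcal{O}$ is defined over $L$; injectivity then embeds $K$ into $\mathrm{End}^0(\overline{\Phi})$. There are two cases.
\begin{itemize}
\item Suppose $\overline{\Phi}$ is ordinary. Then $\mathrm{End}^0(\overline\Phi)=K'$ is an imaginary quadratic field containing $K$, so $K'=K$. For an ordinary module over $\F_\mathfrak{P}$, the Frobenius (or the $\mathfrak{p}$-characteristic) splits: the field $\mathrm{End}^0$ has two places above $\mathfrak{p}$. This is the standard fact (Papikian, Ch.~4) that the $\mathfrak{p}$-divisible group of an ordinary rank-$2$ module has étale part of height $1$. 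So $\mathfrak{p}$ splits in $K$.
\item Suppose $\overline\Phi$ is supersingular. Then $\mathrm{End}^0(\overline\Phi)$ is the quaternion algebra over $k$ ramified exactly at $\mathfrak{p}$ and $\infty$. A quadratic field $K$ embeds into it only if $K$ does not split at $\mathfrak{p}$ or at $\infty$, and $\infty$ is already non-split since $K$ is imaginary. Hence $\mathfrak{p}$ is non-split.
\end{itemize}
These two cases give the equivalence.

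\emph{Part (2).} Now $\mathfrak{p}$ splits in $K$, so $\overline\Phi$ is ordinary. Its endomorphism ring $\mathcal{O}'$ is an order in $K$ containing the image of $\mathcal{O}$, so its conductor divides $\mathfrak{c}$. It remains to show $\mathcal{O}'$ is maximal at $\mathfrak{p}$ and agrees with $\mathcal{O}$ away from $\mathfrak{p}$. I handle each prime $\ell$ separately.
\begin{itemize}
\item \emph{Primes $\ell\neq\mathfrak{p}$.} The $\ell$-adic Tate modules of $\Phi$ and $\overline\Phi$ are identified by reduction, since good reduction makes $\Phi[\ell^n]$ étale. By Tate's isogeny theorem (Taguchi/Tamagawa over finite fields), $\mathcal{O}'\otimes A_\ell$ is the stabilizer of $T_\ell$ in $K_\ell$. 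Over $\mathbb{C}_\infty$, $T_\ell(\Phi)\cong\Lambda\otimes A_\ell$ and $\mathcal{O}\otimes A_\ell$ is its stabilizer. So the two local orders agree.
\item \emph{The prime $\mathfrak{p}$.} This step is the main obstacle. Following Deuring, I would argue that the ordinary $\overline\Phi$ has $\mathfrak{p}$-divisible group of connected-étale type $(1,1)$. Its local endomorphisms act on the étale part $T_\mathfrak{p}$ of rank one and on the connected part, which gives a map $\mathcal{O}'\otimes A_\mathfrak{p}\to A_\mathfrak{p}\times A_\mathfrak{p}=\mathcal{O}_K\otimes A_\mathfrak{p}$. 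Since $\mathfrak{p}$ splits, this map should be surjective onto the maximal order. Alternatively, use the Frobenius $\pi\in\mathcal{O}'$: its index in $\mathcal{O}_K$ is prime to $\mathfrak{p}$, because $\pi$ generates $\mathfrak{p}^{\deg}$ at one place and is a unit at the other. So $A[\pi]$ is already $\mathfrak{p}$-maximal, and $\mathcal{O}'\supseteq A[\pi]$ gives $\mathfrak{p}$-maximality.
\end{itemize}
I would try this Frobenius-index argument first, since it avoids $p$-divisible group machinery.

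Combining the local statements, the conductor of $\mathcal{O}'$ is $\mathfrak{c}'$, the prime-to-$\mathfrak{p}$ part of $\mathfrak{c}$.
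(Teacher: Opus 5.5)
Your proof is correct, and it takes a genuinely different route from the paper, which proves the proposition purely by citation. There, integrality and injectivity of reduction on endomorphisms is Corollary 6.1.12 of Papikian's book, the supersingularity criterion is his Corollary 4.1.11, and the conductor statement is his Exercise 7.5.7 (the Drinfeld analogue of Deuring's reduction theorem).

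You instead make each mechanism explicit:
\begin{itemize}
\item Integrality comes from downward induction on the coefficient of $\tau^{i+2}$ in $u\Phi_T=\Phi_Tu$. After dividing by the unit $\Delta$, each $u_i$ satisfies $u_i^{q^2}-\Delta^{q^i-1}u_i\in\mathcal{O}_{L_\mathfrak{P}}$ once $u_{i+1},u_{i+2}$ are known to be integral.
\item Part (1) uses the embedding criterion into the quaternion algebra ramified exactly at $\mathfrak{p}$ and $\infty$. In the ordinary case it uses the action on the height-one \'etale part, which gives a $k$-algebra map $K\to k_\mathfrak{p}$.
\item Part (2) compares Tate modules at $\ell\neq\mathfrak{p}$ and uses a discriminant computation at $\mathfrak{p}$. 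Tate's isogeny theorem should be applied over a finite field over which all of $\mathrm{End}_{\overline{\F}_\mathfrak{p}}(\overline{\Phi})$ is defined.
\end{itemize}
The paper's version is shorter. Yours shows exactly which inputs are needed: Gekeler's description of the supersingular endomorphism algebra, Tate's theorem, and the shape of the Frobenius ideal.

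One point to tighten: at $\mathfrak{p}$, drop your first suggestion. The connected--\'etale decomposition only gives an injection $\mathcal{O}'\otimes A_\mathfrak{p}\hookrightarrow A_\mathfrak{p}\times A_\mathfrak{p}$, and surjectivity is precisely what has to be shown.

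The Frobenius argument does close this, and its key input follows from facts you already use. Let $\pi$ be the Frobenius of a finite field $\F\supseteq\F_\mathfrak{p}$ of definition, and write $\mathfrak{p}\mathcal{O}_K=\mathfrak{p}_1\mathfrak{p}_2$.
\begin{itemize}
\item $\pi$ acts bijectively on the \'etale part, so it is a unit at the corresponding prime, say $\mathfrak{p}_2$.
\item $N_{K/k}(\pi)$ generates $\mathfrak{p}^m$ with $m=[\F:\F_\mathfrak{p}]\geqslant1$, so $v_{\mathfrak{p}_1}(\pi)=m$.
\end{itemize}
Hence the components $\pi_1,\pi_2$ of $\pi$ in $A_\mathfrak{p}\times A_\mathfrak{p}$ satisfy $\pi_2-\pi_1\in A_\mathfrak{p}^\times$. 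This is exactly the statement $A_\mathfrak{p}[\pi]=A_\mathfrak{p}\times A_\mathfrak{p}$, so $\mathcal{O}'\supseteq A[\pi]$ is maximal at $\mathfrak{p}$.
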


\begin{proof}
    This is a combination of Corollary 6.1.12, Corollary 4.1.11, and Exercise 7.5.7 in \cite{Papi}.
\end{proof}

Let $\overline{\Phi}$ be a Drinfeld $A$-module of rank $2$ over a finite field $\F$.
Given a proper ideal $I$ of $\mathrm{End}(\overline{\Phi})$, there exists an element $u_I\in\F\{\tau\}$ such that the left ideal $\F\{\tau\}I$ equals $\F\{\tau\}u_I$.
This is a consequence of the right division algorithm for $\F\{\tau\}$.
A direct computation shows that $\F\{\tau\}u_I\overline{\Phi}_T\subseteq \F\{\tau\}u_I$.
We denote by $I*\overline{\Phi}$ the Drinfeld module given by $u_I\overline{\Phi}_Tu_I^{-1}\in \F\{\tau\}$. 

\begin{theorem}\label{residualordinarylocus}
    Assume that $\overline{\Phi}$ is ordinary and let $\mathcal{O}=\mathrm{End}_{\F}(\overline{\Phi})$.
    The assignment $I\mapsto I*\overline{\Phi}$ defines a simple and transitive action of the class group $\mathrm{Cl}(\mathcal{O})$ on the isomorphism classes of Drinfeld $A$-module of rank 2 over $\F$ with endomorphism ring isomorphic to $\mathcal{O}$.
\end{theorem}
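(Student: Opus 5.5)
We prove the statement in three steps: well-definedness and the action axioms, transitivity, and freeness, working throughout inside $\F\{\tau\}$. All Drinfeld modules are over $\F$, and $\mathcal{O}=\mathrm{End}_{\F}(\overline{\Phi})$ is an $A$-order in the imaginary quadratic field $K=\mathcal{O}\otimes_A k$, because $\overline{\Phi}$ is ordinary.

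\textbf{Well-definedness.} We first check that $\Psi:=I*\overline{\Phi}$ is a Drinfeld module of rank $2$ over $\F$. Since $u_I\overline{\Phi}_T\in\F\{\tau\}u_I$, we can write $u_I\overline{\Phi}_T=\Psi_Tu_I$ with $\Psi_T\in\F\{\tau\}$. Comparing $\tau$-degrees shows $\Psi_T$ has degree $2$, and comparing constant terms shows it has the same constant term $\gamma(T)$ as $\overline{\Phi}_T$. Thus $u_I$ is an isogeny $\overline{\Phi}\to\Psi$, and $\Psi$ is isogenous to $\overline{\Phi}$.

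Next we show the class of $\Psi$ depends only on the class of $I$. The generator $u_I$ is unique up to left multiplication by $\F^\times$, which changes $\Psi$ only by an isomorphism. Replacing $I$ by $\alpha I$ with $\alpha\in K^\times$ (clearing denominators so that both $I$ and $\alpha I$ are integral) replaces $u_I$ by $u_I\alpha$ up to a unit, and this again yields an isomorphic module. Hence we get a well-defined map on $\mathrm{Cl}(\mathcal{O})$.

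\textbf{Endomorphism ring and action axioms.} The key claim is that $\mathrm{End}(\Psi)\cong\mathcal{O}$. Any isogeny identifies $\mathrm{End}(\Psi)\otimes k$ with $K$, so $\mathrm{End}(\Psi)$ is some order containing $A$. Under this identification, $\mathrm{End}(\Psi)=\{\beta\in K: u_I\beta u_I^{-1}\in\F\{\tau\}\}$, which should be the ring of multipliers of $I$, namely $\mathcal{O}$, since $I$ is proper. The precise statement we would prove is that $\F\{\tau\}I\cap\mathcal{O}=I$ and that left ideals behave well under the isogeny. We would justify these via Dieudonné-type arguments, or by citing Yu's work, which the paper attributes this theorem to.

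The composition law $(IJ)*\overline{\Phi}\cong I*(J*\overline{\Phi})$ would follow from $u_{IJ}=u'_{I}u_J$, where $u'_{I}$ generates the ideal $\F\{\tau\}\,u_JIu_J^{-1}$ transported to $J*\overline{\Phi}$. This is a direct computation with kernels: the kernel of $u_I$ is $\overline{\Phi}[I]=\bigcap_{a\in I}\ker a$, and for isogenies whose kernels are finite étale (or connected) group schemes, kernels multiply.

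\textbf{Transitivity and freeness.} For transitivity, let $\Psi$ be any module with $\mathrm{End}(\Psi)\cong\mathcal{O}$. Ordinary Drinfeld modules over $\F$ with endomorphism algebra $K$ are all isogenous, since they share the same Frobenius characteristic polynomial up to conjugation; the Tate isogeny theorem for Drinfeld modules gives this. We take an isogeny $u:\overline{\Phi}\to\Psi$ and set $I=\{a\in\mathcal{O}: a\in\F\{\tau\}u\}$. The main step, and the expected obstacle, is to show that $I$ is a proper invertible $\mathcal{O}$-ideal with $\F\{\tau\}I=\F\{\tau\}u$, so that $\Psi\cong I*\overline{\Phi}$. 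This requires every kernel to be of the form $\overline{\Phi}[I]$. We would argue prime by prime. At primes $\mathfrak{l}\neq\mathfrak{p}$, the characteristic, the Tate module $T_\mathfrak{l}$ is a free rank-one $\mathcal{O}_\mathfrak{l}$-module (equivalently, Gorenstein-ness of quadratic orders), so finite $\mathcal{O}$-stable subgroups correspond to ideals. At $\mathfrak{p}$, ordinariness makes the connected part of the kernel a power of the kernel of an endomorphism lying over the prime of $K$ above $\mathfrak{p}$.

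For freeness, suppose $I*\overline{\Phi}\cong\overline{\Phi}$. Composing $u_I$ with this isomorphism gives an endomorphism $\beta\in\mathcal{O}$ with $\F\{\tau\}u_I=\F\{\tau\}\beta$. Using $\F\{\tau\}I\cap\mathcal{O}=I$, this forces $I=\beta\mathcal{O}$, so $I$ is principal.
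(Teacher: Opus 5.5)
\textbf{Where the proof breaks: transitivity.} The paper gives no argument of its own here; it invokes Yu's corollary (case (1) of his Proposition 4). Your Waterhouse-style proof is therefore a different, self-contained route, but it fails at the first step of transitivity.

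The claim that all ordinary Drinfeld modules over $\F$ with endomorphism algebra $K$ are $\F$-isogenous is false. The endomorphism ring determines the Frobenius $\pi$ only up to conjugation and units. The twist of $\overline{\Phi}$ by the cocycle sending Frobenius to $\zeta\in\F_q^\times$ has Frobenius $\zeta\pi$, whose trace $\zeta a$ differs from $a\neq 0$ when $\zeta\neq 1$.

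Concretely, take $q=3$, $\gamma(T)=0$, $\F=\F_3$, and $\Phi_T=\tau+\tau^2$, $\Psi_T=2\tau+\tau^2$.
The Frobenius $\tau$ satisfies $\tau^2+\tau-T=0$, respectively $\tau^2+2\tau-T=0$. Both discriminants equal $T+1$, so both modules are ordinary with $\mathrm{End}_{\F_3}=A[\tau]\cong A[\sqrt{T+1}]=\F_3[\sqrt{T+1}]$, a principal ideal domain, so $\#\mathrm{Cl}(\mathcal{O})=1$. Since the Frobenius characteristic polynomials differ, the two modules are not isogenous over $\F_3$, let alone isomorphic. So the set in the statement has at least two elements while the class group is trivial.

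Each $u_I$ is an $\F$-isogeny, so the orbit of $\overline{\Phi}$ never leaves its isogeny class. The statement can therefore only hold, and must be read, for modules $\F$-isogenous to $\overline{\Phi}$ with endomorphism ring $\mathcal{O}$. With that restriction the isogeny $u$ comes for free, and your prime-by-prime description of $\ker u$ is the right way to finish. You must use $\mathrm{End}(\Psi)=\mathcal{O}$, not merely the $\mathcal{O}$-stability of $\ker u$, to see that the local lattices it defines are proper, hence invertible by the Gorenstein property.

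\textbf{Two smaller points.} First, the lemma you defer, $\F\{\tau\}I\cap\mathcal{O}=I$, needs no Dieudonn\'e theory; both $\mathrm{End}(I*\overline{\Phi})\cong\mathcal{O}$ and freeness rest on it.
For $0\neq\lambda\in\mathcal{O}$ one has $\F\{\tau\}\lambda\cap\mathcal{O}=\lambda\mathcal{O}$. Indeed, if $w\lambda\in\mathcal{O}$ with $w\in\F\{\tau\}$, cancelling $\lambda$ on the right in $w\lambda\overline{\Phi}_T=\overline{\Phi}_Tw\lambda$ gives $w\in\mathrm{End}_{\F}(\overline{\Phi})=\mathcal{O}$.
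For invertible $I$, choose $0\neq\lambda\in I$ and put $J=\lambda I^{-1}\subseteq\mathcal{O}$, so that $IJ=\lambda\mathcal{O}$. If $x\in\F\{\tau\}I\cap\mathcal{O}$, then $xJ\subseteq\F\{\tau\}\lambda\cap\mathcal{O}=\lambda\mathcal{O}$, hence $x\in\lambda J^{-1}=I$.

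Second, if $u_I$ has $\tau$-valuation $m>0$, comparing lowest terms gives $I*\overline{\Phi}$ the constant term $\gamma(T)^{q^m}$, not $\gamma(T)$. This is harmless because $m$ is the minimum of the $\tau$-valuations of elements of $I$. Each of these valuations is divisible by $\deg\mathfrak{p}$, where $\mathfrak{p}=\ker\gamma$, as one sees by comparing lowest terms in $a\overline{\Phi}_T=\overline{\Phi}_Ta$.
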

\begin{proof}
    This corresponds to case (1) in \cite[Corollary p.170]{Yu}, case (1) referring to the notation in \cite[Proposition 4 p.169]{Yu}.
\end{proof}

\begin{rem}
    In \cite{Yu}, the construction of $I*\overline{\Phi}$ is considered for any ideal of $\mathrm{End}(\overline{\Phi})$, not necessarily proper ideals.
    In this generality, the endomorphism ring of $I*\overline{\Phi}$ agrees with the ring of multipliers of $I$.
    Beyond the rank 2 case, this construction is also considered in \cite{KaremakerKatenPapikian} for the case when $\mathrm{End}(\overline{\Phi})$ is commutative, where one has to distinguish ideals from \textit{kernel ideals} in order to keep the property about the endomorphism ring of $I*\overline{\Phi}$ (see Lemma 4.2 in \cite{KaremakerKatenPapikian}).
    When $\mathrm{End}(\overline{\Phi})$ is a Gorenstein ring, there is no distinction between ideals and kernel ideals (see Proposition 4.5 in \cite{KaremakerKatenPapikian}).
    Observe that in Theorem \ref{residualordinarylocus} we have that $\mathrm{End}(\overline{\Phi})$ is a quadratic $A$-order and these are always Gorenstein by \cite[Section 5.1]{JenTho15}. 
\end{rem}

\subsection{Proof of Theorem \ref{dichotomy} and Theorem \ref{bijection}}

We start with proving Theorem \ref{bijection}, and deduce afterwards Theorem \ref{dichotomy}.

\begin{proof}[Proof of Theorem \ref{bijection}]
    By Proposition \ref{redtype} item \ref{redtype2}, if $j$ reduces to $\overline{j}$, then $\mathcal{O}_j=A+\mathfrak{p}^n\mathcal{O}_{\overline{j}}$ for some $n\geqslant0$.
    Therefore, $j$ belongs to $\mathcal{J}_\mathfrak{p}$ if and only if $\mathcal{O}_{j}=\mathcal{O}_{\overline{j}}$.
    The surjectivity of the reduction map follows from a function field version of Deuring's Lifting Theorem (see \cite[Theorem 7]{CojocaruPapikian}).
    Take $\overline{j}$ ordinary in $\overline{\F}_\mathfrak{p}$ and $\alpha_0$ in $\mathcal{O}_{\overline{j}}$ such that $\mathcal{O}_{\overline{j}}=A[\alpha_0]$.
    There exists a lift $(j,\alpha)$ of the pair $(\overline{j},\alpha_0)$ to $A$-characteristic zero.
    Since $\alpha\in\mathcal{O}_j\subseteq \mathcal{O}_{\overline{j}}$ and $\alpha$ reduces to $\alpha_0$, we can conclude that $\mathcal{O}_{j}=\mathcal{O}_{\overline{j}}$ and so $j$ belongs to $\mathcal{J}_{\mathfrak{p}}$.
    Finally, restricting to the subset of $\mathcal{J}_{\mathfrak{p}}$ with a given endomorphism ring $\mathcal{O}$, we obtain a surjective map onto the ordinary Drinfeld singular moduli with endomorphism ring $\mathcal{O}$.
    By Proposition \ref{CMbijection} and Theorem \ref{residualordinarylocus}, both sets have cardinality $\#\mathrm{Cl}(\mathcal{O})$ and so this map is injective. 
\end{proof}

This is enough to obtain the following proof of Theorem \ref{dichotomy}.

\begin{proof}[Proof of Theorem \ref{dichotomy}]
    The congruence $j_1\equiv j_2\mod\mathfrak{P}$ implies that $\mathcal{O}_{\overline{j_1}} \cong \mathcal{O}_{\overline{j_2}}$.
    This order is of rank $4$ if and only if $\mathfrak{p}$ does not split in $K_{j_1}$ and $K_{j_2}$ by Proposition \ref{redtype} item \ref{redtype1}.
    Therefore it is enough to consider the case when $\mathfrak{p}$ splits in $K_{j_1}$ and $K_{j_2}$ and the endomorphism ring has rank $2$.
    Since $K_{j_i}$ is the fraction field of $\mathcal{O}_{\overline{j_i}}$, we must have that $K_{j_1} = K_{j_2}$, and from now on we will simply denote this field by $K$.
    Write $\mathcal{O}_{j_i} = A+\mathfrak{c}_i\mathcal{O}_{K}$ and $\mathfrak{c}_i = \mathfrak{p}^{n_i}\mathfrak{c}_i'$ with $\mathfrak{c}_i'$ coprime to $\mathfrak{p}$.
    Then by Proposition \ref{redtype} item \ref{redtype2}, $\mathcal{O}_{\overline{j_i}} = A+\mathfrak{c}_i'\mathcal{O}_{K}$ from where $\mathfrak{c}_1'=\mathfrak{c}_2'$.
    Finally, suppose that $\mathfrak{p}$ does not divide $\mathfrak{c}_1\mathfrak{c}_2$.
    Then $\mathfrak{c}_1 = \mathfrak{c}_2 = \mathfrak{c}_1' = \mathfrak{c}_2'$ and Theorem \ref{bijection} implies that $j_1 = j_2$, which is a contradiction.
\end{proof}

\subsection{Proof of Theorem \ref{finiteness}}

The singular modulus $j_0=0$ is associated with the imaginary quadratic field $\F_{q^2}(T)$, as can be seen by direct computation with the equation $\Phi_T=T+\tau^2$ for instance.
The extension $\F_{q^2}(T)/\F_{q}(T)$ is unramified at all places and a prime $\mathfrak{p}$ splits if and only if the degree of $\mathfrak{p}$ is even.

Recall that $S_{0}$ is the infinite set of finite places in $\F_q(T)$ given by \[S_{0}=\{\mathfrak{p}\mid \mathfrak{p}\mbox{ has even degree}\}.\]
The following two propositions show that any singular modulus which is an $S_{0}$-unit is in fact a unit.

\begin{prop}\label{notS0}
    Let $j$ be a Drinfeld singular modulus with imaginary quadratic field $K\neq\F_{q^2}(T)$.
    Let $S$ be any set of places of $k=\mathbb{F}_q(T)$.
If $j$ is an $S$-unit, then it is an $S\backslash S_0$-unit.  
\end{prop}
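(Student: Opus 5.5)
The aim is to show that no prime $\mathfrak{p}$ of even degree can divide $j$. Then the valuations of $j$ at places above primes in $S\cap S_0$ vanish, and $j$ is an $S\setminus S_0$-unit. Since $j$ is integral over $A$, it suffices to show that for every even-degree prime $\mathfrak{p}$ and every prime $\mathfrak{P}$ above $\mathfrak{p}$ in a function field $L$ containing $j$, we have $j\not\equiv 0 \bmod \mathfrak{P}$. I would compare $j$ with the singular modulus $j_0=0$ and apply Theorem \ref{dichotomy}.

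Fix $\mathfrak{p}$ of even degree and $\mathfrak{P}\mid\mathfrak{p}$ in $L$, and suppose $j\equiv 0=j_0 \bmod \mathfrak{P}$. We have $j\neq j_0$: if $j=0$, then the Drinfeld module has CM field $\F_{q^2}(T)$, as computed above with $\Phi_T=T+\tau^2$. (Strictly, one checks that $\mathcal{O}_0=\F_{q^2}[T]$, so $K_{0}=\F_{q^2}(T)$, contradicting $K\neq\F_{q^2}(T)$.) Theorem \ref{dichotomy} then gives two alternatives.
\begin{enumerate}
\item $K_j=K_{j_0}=\F_{q^2}(T)$. This is excluded by hypothesis.
\item $\mathfrak{p}$ is non-split in both $K_j$ and $K_{j_0}$. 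But $\F_{q^2}(T)/\F_q(T)$ is the constant field extension, in which a prime splits exactly when its degree is even. Since $\deg\mathfrak{p}$ is even, $\mathfrak{p}$ splits in $K_{j_0}$, a contradiction.
\end{enumerate}
Hence $v_\mathfrak{P}(j)=0$ for every $\mathfrak{P}$ above an even-degree prime. So if $j$ is an $S$-unit, its support is contained in places above $S\setminus S_0$.

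The main point needing care is the applicability of Theorem \ref{dichotomy} to $j_0=0$. This requires that $\mathcal{O}_{j_0}$ is indeed an order in $\F_{q^2}(T)$; here it is the maximal order $\F_{q^2}[T]$, since $\Phi_T=T+\tau^2$ commutes with $\F_{q^2}\subset\C_\infty\{\tau\}$ and the endomorphism ring has rank $2$. Both singular moduli must also lie in a common function field $L$, which we can take to be $k(j)$ since $j_0=0\in k$. One should also note that Theorem \ref{dichotomy} uses potentially good reduction, which holds for CM modules by the remark following the definition of good reduction. This allows extending $L$ harmlessly, since valuations only scale under extension.
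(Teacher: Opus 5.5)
Your proposal is correct and follows the paper's proof. Both compare $j$ with $j_0=0$ and apply Theorem \ref{dichotomy}: the first alternative is ruled out by $K\neq\F_{q^2}(T)$, and the second because even-degree primes split in the constant field extension $\F_{q^2}(T)/k$. Your additional checks ($j\neq j_0$, taking the common field to be $k(j)$, and $\mathcal{O}_{j_0}=\F_{q^2}[T]$) simply make explicit what the paper leaves implicit.
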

\begin{proof}
    Let $\mathfrak{P}$ be a prime in $k(j)$ above $\mathfrak{p}$ and assume that $j\equiv j_0\mod \mathfrak{P}$.
    By our assumption and Theorem \ref{dichotomy}, $\mathfrak{p}$ must be inert in both $K$ and $\F_{q^2}(T)$.
    This implies that $\mathfrak{p}$ is not in $S_{0}$.
    Thus $j$ is an $S\backslash S_0$-unit.
\end{proof}

\begin{prop}\label{not0bis}
    Let $j$ be a Drinfeld singular modulus with associated imaginary quadratic field $\F_{q^2}(T)$.
    Then for every prime $\mathfrak{p}$ of degree $1$, there exists some prime $\mathfrak{P}$ above $\mathfrak{p}$ in $k(j)$ dividing $j$.
    In particular, $j$ is not an $S_{0}$-unit.
\end{prop}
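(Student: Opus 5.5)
The plan is to show that at every prime of degree $1$ the reduction of $j$ is forced to be supersingular, and that the only supersingular invariant in characteristic $\mathfrak{p}$ of degree $1$ is $0$.

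Let $\mathfrak{p}=(T-c)$ with $c\in\F_q$, and let $L$ be a finite extension of $k(j)$ over which a Drinfeld module $\Phi$ with $j(\Phi)=j$ is defined and has good reduction at some prime $\mathfrak{P}'$ above $\mathfrak{p}$. Such $L$ and $\mathfrak{P}'$ exist because CM Drinfeld modules have potentially good reduction everywhere. Put $\mathfrak{P}=\mathfrak{P}'\cap k(j)$.

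First, the type of reduction. As recalled before Proposition \ref{notS0}, a prime splits in $\F_{q^2}(T)$ if and only if its degree is even. So $\mathfrak{p}$ is inert in $K=\F_{q^2}(T)$. By Proposition \ref{redtype} item \ref{redtype1}, the reduction $\overline{\Phi}$ at $\mathfrak{P}'$ is supersingular.

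Second, supersingular modules in characteristic $\mathfrak{p}$ of degree $1$. Write the reduction, after the rescaling allowed by good reduction, as $\overline{\Phi}_T=c+\bar g\tau+\bar\Delta\tau^2$ over $\F_{\mathfrak{P}'}$, with $\bar\Delta\neq0$. Then $\overline{\Phi}_{T-c}=\bar g\tau+\bar\Delta\tau^2$.
\begin{itemize}
\item If $\bar g\neq0$, this $\mathfrak{p}$-multiplication is not purely inseparable. It has nonzero $\mathfrak{p}$-torsion over $\overline{\F}_\mathfrak{p}$, so the module has height $1$ and is ordinary.
\item Hence supersingularity, which for rank $2$ means height $2$ (Definition \ref{ord&sup} together with the equivalence with \cite[Definition 4.1.9]{Papi}), forces $\bar g=0$.
\end{itemize}
Therefore $\overline{j}=\bar g^{q+1}/\bar\Delta=0$.

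Third, conclude. The invariant $j(\Phi)=g^{q+1}/\Delta$ is unchanged by the rescaling $c^{-1}\Phi_T c$. Since $j$ is integral and reduction commutes with this formula, $j\equiv 0 \bmod \mathfrak{P}'$, hence $j\equiv0\bmod\mathfrak{P}$, i.e. $\mathfrak{P}$ divides $j$ (this includes the trivial case $j=0$). The prime $\mathfrak{p}$ has odd degree, so $\mathfrak{p}\notin S_0$, and therefore $j$ is not an $S_0$-unit.

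The main obstacle is the second step. One must check carefully that supersingular in the sense of Definition \ref{ord&sup} coincides with $\overline{\Phi}_{\mathfrak{p}}$ being purely inseparable, i.e. height $2$. The cleanest route is to cite \cite[Corollary 4.1.1]{Papi} for this equivalence, and then the computation with $\overline{\Phi}_{T-c}$ is immediate.
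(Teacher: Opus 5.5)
Your proposal is correct and follows the paper's proof: $\mathfrak{p}$ is inert in $\F_{q^2}(T)$, so the reduction is supersingular by Proposition \ref{redtype}, and the reduced $j$-invariant must therefore be $0$, giving a prime $\mathfrak{P}$ above $\mathfrak{p}$ that divides $j$. The only difference is that the paper cites \cite[Corollary 4.4.12]{Papi} for the fact that $0$ is the only supersingular invariant in characteristic $\mathfrak{p}$. You instead verify this directly from $\overline{\Phi}_{T-c}=\bar g\tau+\bar\Delta\tau^2$ and the height characterization of supersingularity, which is a valid self-contained substitute.
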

\begin{proof}
    Since $\mathfrak{p}$ is of odd degree, it is inert in $\F_{q^2}(T)$.
    In particular, $j$ has supersingular reduction at $\mathfrak{p}$ (Proposition \ref{redtype} item \ref{redtype1}).
    By \cite[Corollary 4.4.12]{Papi}, the only supersingular Drinfeld module of rank $2$ in characteristic $\mathfrak{p}$ is $0$.
    Therefore, $j$ is divisible by some prime $\mathfrak{P}$ above $\mathfrak{p}$ in $k(j)$.
\end{proof}

From Propositions \ref{notS0} and \ref{not0bis} we conclude that for a Drinfeld singular modulus, being an $S_{0}$-unit is equivalent to being a unit.
Therefore, Theorem \ref{finiteness} is a consequence of \cite[Theorem 1.2]{AABP26}.

\begin{rem}
    Proposition \ref{notS0} generalises Proposition 5.6 of \cite{Dorman} (which is restricted to odd charactersitic and maximal orders). Indeed, if $j$ has CM by the maximal order in $\mathbb{F}_{q^2}[T]$, then $j=0$.
\end{rem}

Let us now add that one can construct infinitely many examples of singular moduli that are divisible by primes of both even and odd degree.
Indeed, consider $\mathfrak{q}$ a prime of even degree in the following complement to Proposition \ref{not0bis}.

\begin{prop}
    Let $n\geqslant1$ be an integer.
    Let $j$ be a Drinfeld singular modulus, with CM by the order of conductor $\mathfrak{q}^n$ in $\mathbb{F}_{q^2}(T)$.
    Then for each prime $\mathfrak{p}$ of degree $1$ or equal to $\mathfrak{q}$, there exists some prime $\mathfrak{P}$ above $\mathfrak{p}$ in $k(j)$ dividing $j$.
\end{prop}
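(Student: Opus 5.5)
We treat the two kinds of primes separately, in each case showing that $\Phi$ has supersingular reduction at some prime above $\mathfrak{p}$. Then we conclude exactly as in the proof of Proposition \ref{not0bis}: by \cite[Corollary 4.4.12]{Papi}, for $\mathfrak{p}$ of degree $1$ the only supersingular $j$-invariant in characteristic $\mathfrak{p}$ is $0$.

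\emph{Primes of degree $1$.} Let $\mathfrak{p}$ have degree $1$. Since $\deg\mathfrak{p}$ is odd, $\mathfrak{p}$ is inert in $K=\F_{q^2}(T)$. Let $\mathfrak{P}$ be any prime of $k(j)$ above $\mathfrak{p}$. After passing to a finite extension, $\Phi$ has good reduction at a prime above $\mathfrak{P}$, because CM Drinfeld modules have potentially good reduction everywhere. Proposition \ref{redtype} item \ref{redtype1} does not involve the conductor, so it applies to the order of conductor $\mathfrak{q}^n$ as well, and the reduction is supersingular. Its $j$-invariant is therefore $0$, so $\mathfrak{P}\mid j$. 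This case is identical to Proposition \ref{not0bis}.

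\emph{The prime $\mathfrak{p}=\mathfrak{q}$.} If $\deg\mathfrak{q}$ is odd, the previous argument applies verbatim, since $\mathfrak{q}$ is inert. The main case is $\deg\mathfrak{q}$ even, where $\mathfrak{q}$ splits in $K$ and the reduction is ordinary.

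Here we apply Proposition \ref{redtype} item \ref{redtype2} with $\mathfrak{c}=\mathfrak{q}^n$, so that $\mathfrak{c}'=1$. The reduction $\overline{j}$ is then ordinary with CM by the maximal order $\mathcal{O}_K=\F_{q^2}[T]$.

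By Theorem \ref{bijection}, $\overline{j}$ is the reduction of the unique element of $\mathcal{J}_\mathfrak{q}$ having CM by $\mathcal{O}_K$. The class group of $\F_{q^2}[T]$ is trivial, since it is a PID, so by Proposition \ref{CMbijection} that element is $j_0=0$.

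We still have to make the identification with $0$ precise. For a suitable prime $\mathfrak{P}$ of $k(j)$ above $\mathfrak{q}$, we choose $\overline{\mathfrak{P}}$ in $\overline{k}$ above it. The reductions of $j$ and of $0$ modulo $\overline{\mathfrak{P}}$ are two ordinary invariants whose endomorphism rings are both $\mathcal{O}_K$. By Theorem \ref{residualordinarylocus}, with $\#\mathrm{Cl}(\mathcal{O}_K)=1$, the isomorphism class over $\overline{\F}_\mathfrak{q}$ with this endomorphism ring is unique. Hence $\overline{j}=\overline{0}=0$, which gives $\mathfrak{P}\mid j$.

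\emph{Main obstacle.} The delicate point is the appeal to Theorem \ref{residualordinarylocus}. That theorem works over a finite field, while endomorphism rings are taken over $\overline{\F}$. To bridge this, we first realize both reductions over a common finite field large enough that all endomorphisms are defined, and only then compare $j$-invariants.

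In fact the argument gives more: the conclusion holds at \emph{every} prime above $\mathfrak{q}$, not just one.
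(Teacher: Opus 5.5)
Your treatment of primes of degree $1$ and of $\mathfrak{q}$ of even degree is correct, and it is essentially the paper's proof. The paper also quotes Proposition~\ref{not0bis} for degree $1$. For $\mathfrak{q}$ it uses Proposition~\ref{redtype} item~\ref{redtype2} to see that the reduction has CM by $\F_{q^2}[T]$, then asserts that the reduction must be $0$. Your appeal to Theorem~\ref{bijection} with $\#\mathrm{Cl}(\F_{q^2}[T])=1$ validly justifies that last step. A shorter justification also exists: $\zeta\in\F_{q^2}\smallsetminus\F_q$ acts on the reduction as a constant, and $\zeta g=g\zeta^q$ forces $g=0$. Either way, the extra detour through Theorem~\ref{residualordinarylocus} is unnecessary.

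The step that fails is your claim that for $\deg\mathfrak{q}$ odd ``the previous argument applies verbatim''. The degree-$1$ argument rests on $0$ being the \emph{only} supersingular $j$-invariant in characteristic $\mathfrak{p}$, and this is special to $\deg\mathfrak{p}=1$. For odd $d=\deg\mathfrak{q}\geqslant 3$ there are $1+\frac{q^d-q}{q^2-1}$ supersingular invariants (e.g.\ $q+1$ when $d=3$). So knowing that the reduction is supersingular does not give $\overline{j}=0$.

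The paper never meets this case. The sentence preceding the proposition takes $\mathfrak{q}$ of even degree, and its proof uses item~\ref{redtype2}, i.e.\ that $\mathfrak{q}$ splits. So either restrict to $\deg\mathfrak{q}$ even (degree $1$ is already covered), or supply a real argument. Here is one possible argument:
\begin{enumerate}
\item Let $Q$ be the monic generator of $\mathfrak{q}$, take $\zeta\in\F_{q^2}\smallsetminus\F_q$, and let $u$ be the reduction of the endomorphism $Q^n\zeta\in\mathcal{O}$.
\item Supersingularity gives $\overline{\Phi}_{Q^n}=c\,\tau^{2n\deg Q}$.
\item The power $u^{q^2-1}=\overline{\Phi}_{Q^{n(q^2-1)}}$ is a monomial, so $u$ is itself a monomial, of $\tau$-degree $2n\deg Q$.
\item Hence $u=v\,\overline{\Phi}_{Q^n}$ with $v\in\overline{\F}_\mathfrak{q}^\times$, and cancelling $\overline{\Phi}_{Q^n}$ on the right shows that $v$ commutes with $\overline{\Phi}_T$.
\item If $v\in\F_q$, then $u=\overline{\Phi}_{vQ^n}$, and injectivity of reduction on endomorphisms (Proposition~\ref{redtype}) would give $\zeta=v$, which is absurd.
\item So $v\notin\F_q$, and comparing $\tau$-coefficients gives $vg=gv^q$, hence $g=0$ and $\overline{j}=0$.
\end{enumerate}
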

\begin{proof}
    For $\mathfrak{p}$ of degree $1$ we know that this holds from Proposition \ref{not0bis}.
    Working with $\mathfrak{p}=\mathfrak{q}$, from Proposition \ref{redtype} we know that the reduction of $j$ modulo $\mathfrak{p}$ has CM by the maximal order in $\mathbb{F}_{q^2}(T)$.
    Thus the reduction must be zero and therefore $j$ is divisible by some prime in $k(j)$ above $\mathfrak{q}$.
\end{proof}

\subsection{Proof of Proposition \ref{conditions}}

We will now use Brown's Theorem 2.8.2 in \cite{Bro92}, as $q$ is assumed to be odd, to obtain a proof of Proposition \ref{conditions}.

\begin{proof}[Proof of Proposition \ref{conditions}]
In this case we have that $K/k$ is separable, and in that case $[k(j):k]=[K(j):K]$ by \cite[Lemma 2.1]{AABP26}: let us call this degree $h$.
The crucial question is: when is $h$ odd?
The degree $h$ is odd if and only if $D$ is of odd degree and a prime discriminant.
This follows from \cite[Proposition 14.7]{Rosen} and \cite[Section 11]{Artin}.

Let us now recall Theorem 2.8.2 from \cite{Bro92}:
\begin{theorem} \label{Bro}(Brown)
Let $q$ be odd and $z\in \mathbb{C}_\infty\smallsetminus k_\infty$ such that $j(z)$ has discriminant $D$.
Let $n\geqslant 0$ be the smallest integer such that $n\geqslant \log _q |z|$.
\begin{enumerate}
    \item If $\deg D$ is odd, then $n\geqslant 1$ and $\vert j(z)\vert = q^{\frac{q+1}{2} q^n}$.
    \item If $\deg D$ is even, then:
    \begin{enumerate}
        \item if $n\geqslant 1$, we have $|j(z)|= q^{q^{n+1}}$; 
        \item if $n=0$, then there exists a unique $e\in \mathbb F_{q^2}\setminus \mathbb F_q$ such that $|z-e| < 1$, and we have
        $|j(z)| = q^q |z-e|^{q+1}$.
    \end{enumerate}
\end{enumerate}
\end{theorem}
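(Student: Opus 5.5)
The plan is to compute $|g(\Lambda_z)|$ and $|\Delta(\Lambda_z)|$ directly from the Newton polygon of $\Phi^{(z)}_T(X)=TX+gX^q+\Delta X^{q^2}$, whose nonzero roots are the $T$-torsion points $e_{\Lambda_z}(\lambda/T)$ for $\lambda\in\Lambda_z\smallsetminus T\Lambda_z$.

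\textbf{Normalisation.} First I replace $z$ by a $\mathrm{GL}_2(A)$-translate in the standard fundamental domain, so that $|z|=|z|_i:=\inf_{x\in k_\infty}|z-x|\geqslant1$ and $1,z$ is a reduced (successive-minima) basis of $\Lambda_z$. This does not change $j(z)$, and I assume the $z$ in the statement is already so normalised. Then for every $a,b\in A$ we have $|a+bz|=\max(|a|,|b||z|)$.

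\textbf{Parity of $\deg D$.} Since $z\in K$, we have $\log_q|z|\in\frac12\mathbb{Z}$. It lies in $\mathbb{Z}$ exactly when $\infty$ is unramified in $K$, i.e. $\deg D$ even (inert case). It lies in $\frac12+\mathbb{Z}$ when $\deg D$ is odd (ramified case). In the odd case $|z|\geqslant1$ forces $|z|=q^{n-1/2}$ with $n\geqslant1$, which gives the first assertion $n\geqslant1$ of item 1.

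\textbf{Torsion points and Newton polygon.} Next I compute $|e_{\Lambda}(w)|=|w|\prod_{0<|\lambda|<|w|}|w/\lambda|$ (the factors with $|\lambda|\geqslant|w|$ have absolute value $1$ when $w$ is not close to the lattice). I apply this to $w=1/T$ and to $w=z/T$, where the lattice points of size less than $|w|$ are counted explicitly using the reduced basis. This yields two radii $r_1<r_2$ among the $q^2-1$ torsion points: there are $q-1$ points of size $r_1=|e(1/T)|$ and $q^2-q$ points of size $r_2=|e(z/T)|$, provided $|z|>1$. The Newton polygon of $TX+gX^q+\Delta X^{q^2}$ then has the two slopes $-\log r_1$ and $-\log r_2$, and reading off the vertices gives $|g|=|T|\,r_1^{1-q}$ and $|\Delta|=|g|\,r_2^{q-q^2}$. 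Substituting into $|j|=|g|^{q+1}/|\Delta|$ and simplifying the geometric sums coming from the lattice-point counts should give $q^{\frac{q+1}{2}q^n}$ when $|z|=q^{n-1/2}$, and $q^{q^{n+1}}$ when $|z|=q^n$ with $n\geqslant1$. This covers items 1 and 2(a). Here the bookkeeping of which $\lambda$ satisfy $|\lambda|<|w|$ (the boundary terms $|\lambda|=|w|$) is the delicate routine part.

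\textbf{The case $n=0$ (main obstacle).} When $|z|=1$ the two radii coincide and the Newton polygon has a single segment. Then only the upper bound $|g|^{q+1}/|\Delta|\leqslant q^q$ follows, and cancellation can occur. Reduction of $z$ modulo the maximal ideal of $\mathcal{O}_{\mathbb{C}_\infty}$ lands in $\overline{\mathbb{F}}_q\smallsetminus\mathbb{F}_q$, and since $z$ is quadratic over $k$ with $\deg D$ even, the reduction lies in $\mathbb{F}_{q^2}\smallsetminus\mathbb{F}_q$. This produces the unique $e$ with $|z-e|<1$. I would then expand $g$ as a function near the elliptic point $e$, where $j(e)=0$ because $\Lambda_e=\mathbb{F}_{q^2}[T]$ gives $\Phi_T=T+\tau^2$ up to scaling. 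The stabiliser of $e$ in $\mathrm{GL}_2(A)$ modulo centre has order $q+1$, so $g$ vanishes to first order at $e$ and $j$ to order $q+1$. Concretely, I would try to show $|g(\Lambda_z)|=c\,|z-e|$ by writing $g$ as a sum over the lattice (for instance via the Eisenstein series $E_{q-1}$, up to a constant, with $g=(T^q-T)E_{q-1}$), pairing the terms indexed by $\mathbb{F}_{q^2}$-translates, and showing that the leading term is linear in $z-e$. Meanwhile $|\Delta|$ stays equal to its value at $e$. The constant is then fixed by comparison with the boundary value $q^q$ obtained at $|z-e|=1$. Controlling this cancellation precisely is the step I expect to be hardest.
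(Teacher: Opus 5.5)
The paper does not prove this statement. It quotes it from \cite[Theorem 2.8.2]{Bro92} and uses it as a black box, so your proposal has to stand on its own.

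Your normalisation $|z|=|z|_i\geqslant1$ is genuinely needed: replacing $z$ by $z+a$ with $\deg a$ large changes $n$ but not $j(z)$. Your treatment of items 1 and 2(a) is sound. By reducedness, the factors $|1-w/\mu|$ with $|\mu|=|w|$ all equal $1$ for the torsion points $w=(a_0+cz)/T$, $c\in\mathbb{F}_q^\times$. The count then gives $\log_q|j|=q^2+(q^2-q)\log_q r_2$, with $\log_q r_2=\frac{q^n-q}{q-1}$ in the inert case and $\log_q r_2=\frac{q^n-q}{q-1}-\frac{q^{n-1}}{2}$ in the ramified case. These give exactly $q^{n+1}$ and $\frac{q+1}{2}q^n$.

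The gap is in item 2(b): the decisive estimate is announced rather than proved, and your two supporting arguments do not supply it.

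First, the stabiliser $\mathbb{F}_{q^2}^\times/\mathbb{F}_q^\times$ together with $g(\gamma z)=(cz+d)^{q-1}g(z)$ only yields $\mathrm{ord}_e g\equiv1\pmod{q+1}$, not $\mathrm{ord}_e g=1$. In any case an order of vanishing is information at the single point $e$, whereas you need $|g(z)|=q^q|z-e|$ on the whole disc $|z-e|<1$.

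Second, ``fixing the constant by comparison with the boundary value at $|z-e|=1$'' is not an argument as it stands. The expansion of $g$ at $e$ does not converge on that circle, which contains $\mathbb{F}_q\subset k_\infty$. Making it rigorous would require affinoid reduction theory plus the fact that $e$ is the only zero of $g$ in the disc.

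Your Eisenstein-series idea closes the gap directly:
\begin{itemize}
\item[(i)] Write $g=(T^q-T)E_{q-1}$ with $E_{q-1}(z)=\sum_{0\neq\lambda\in\Lambda_z}\lambda^{1-q}$.
\item[(ii)] The terms $a+bz$ with $a,b\in\mathbb{F}_q$: each $\mathbb{F}_q$-line $\mathbb{F}_q\lambda_L$ contributes $(q-1)\lambda_L^{1-q}=-\lambda_L^{1-q}$, so together they give
\[
-1-\sum_{c\in\mathbb{F}_q}(z-c)^{1-q}=-1-(z^q-z)^{1-q}=-\frac{z^{q^2}-z}{(z^q-z)^{q}}.
\]
This has absolute value exactly $|z-e|$.
\item[(iii)] For $d\geqslant1$, the shell $\{a+be:\max(\deg a,\deg b)=d\}=\{\lambda\in\mathbb{F}_{q^2}[T]:\deg\lambda=d\}$ is stable under $\mathbb{F}_{q^2}^\times$. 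Hence its contribution vanishes at $z=e$.
\item[(iv)] Every $z$ in the disc satisfies $|z|=|z|_i=1$, so $|a+bz|=|a+be|=q^d$. Termwise this gives $|(a+bz)^{1-q}-(a+be)^{1-q}|\leqslant q^{-d(q-1)}|z-e|$, so the tail is at most $q^{1-q}|z-e|$.
\end{itemize}
Therefore $|E_{q-1}(z)|=|z-e|$ and $|g(z)|=q^q|z-e|$. Combined with $|\Delta|=q^{q^2}$ from your single-segment Newton polygon, this gives $|j(z)|=q^q|z-e|^{q+1}$, with no appeal to orders of vanishing or boundary comparisons.
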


% z is the root of a quadratic equation with discriminant D
% look at Fabien's paper for the set of values of z, simpler when q is odd
% n <= deg D / 2

Assume now that $j$ is an $S_{0}$-unit.
From \cite[Chapter VI paragraph 8 no.7 Proposition 10]{Bourbaki}, it follows that $|N_{k(j)/j}(j)|=|j|^{[k(j):k]}=|j|^h$.
Since $j$ is an $S_{0}$-unit if and only if the only primes dividing $N_{k(j)/j}(j)$ are in $S_{0}$, we have that $\log_q|j|^h$ is even.
By Brown's Theorem, when $\deg{D}$ is odd this is equivalent to $h\frac{q+1}{2}q^n$ being even.
We can conclude that if $q$ and $\deg{D}$ are odd, then $\frac{q+1}{2}h$ must be even but this is not possible with the assumptions of Proposition \ref{conditions}.
This completes the proof.
\end{proof}

%If the degree of $D$ is even, in both subcases $h$ must be even. 

\begin{rem}\label{char2}
    For $q$ even, the conclusions are not as useful.
    Applying Theorem 5.2 of \cite{HsYu98} instead of Brown's Theorem, we may follow the same line of thought.
    It is still true that $h$ is even when the extension is inert, thus one can only say something non-trivial when $\deg{D}$ is odd or the extension is inseparable.
    Assuming $\deg{D}$ odd, one can say something non-trivial only for $q^n/2$ odd, which implies $q=2$ and $n=1$.
    The inseparable case only treats one discriminant.
    Moreover, by \cite[remark 2.5]{AABP26}, in this case $[k(j):k]=2h$ so we have $\log_q|j|^{2h}$ even, which is always true.
\end{rem}

\section{Computing Drinfeld singular moduli}\label{computations}

The goal of this section is to show that one can compute explicitly the $j$-invariant of a rank 2 Drinfeld module with prescribed complex multiplication.
This helps us understand what possible set $S$ we may consider to find explicit Drinfeld singular moduli which are $S$-units.

\subsection{Imaginary quadratic orders}
Before describing the algorithm, let us start by describing explicit generators for quadratic $A$-orders arising as endomorphism rings of rank-2 CM Drinfeld modules.
In odd characteristic, we follow \cite[Section 2.2.2]{AABP26}: every imaginary quadratic extension $K$ of $k$ is separable
and has the form $K=k(\sqrt{D})$,
where $D\in A$ is not a square in $k$.
For $D\in A$, $D\neq0$, the field $K=k(\sqrt{D})$ is an imaginary quadratic extension of $k$ if and only if the following
condition holds:
\begin{equation}
\begin{cases}\label{cond-imaginary}
\text{(i) $\deg D$ is odd}\\
\text{or}  \\
\text{(ii) $\deg D$ is even and the leading coefficient of $D$ is not a square in $\mathbb{F}_q^{\times}$}.
\end{cases}
\end{equation}
By Proposition 14.6 in \cite{Rosen}, we know that $$(\deg D\, \textrm{is odd}) \iff (\infty=1/T \,\textrm{is ramified in}\, K/k)$$ and that $$(\deg D\, \textrm{is even and the leading coefficient of $D$ is not a square in}\, \mathbb{F}_q^{\times}) \iff (\infty=1/T\,\textrm{is inert in}\, K/k).$$  

When Condition (\ref{cond-imaginary}) is satisfied, the $A$-module
\begin{equation}\label{Hilbert odd}
    \mathcal{O}=A[\sqrt{D}]=A+A\sqrt{D}
\end{equation}
is the unique order in $K$ whose discriminant is $D$. 

In characteristic 2, every order $\mathcal{O}$ in an imaginary quadratic separable extension of $k$ is obtained as follows, see \cite[Section 2.2.2]{AABP26} or \cite[Section 2]{Chen08}.
Let $D=\prod_{i=1}^sP_i^{2n_i}$ be a (possibly empty) product with each $P_i$ a monic irreducible element in $A$.
Consider $\mathrm{rad}(D):=\prod P_i$ and $\sqrt{D}:=\prod P_i^{n_i}$ and let  $C=D/\mathrm{rad}(D)$.
Take $B\in A$ such that $\deg B\geqslant\deg C$ and choose $\xi$ a root of \[X^2+X+B/C.\]
Then $K=k(\xi)$ is a separable imaginary quadratic extension of $k$ and its maximal order is $A[\sqrt{D}\xi]$ (of discriminant $D$).
For every monic $f$, 
\begin{equation}\label{Hilbert even}
    \mathcal{O}=A[f\sqrt{D}\xi] = A + Af\sqrt{D}\xi
\end{equation}
is the unique suborder of conductor $f$.
The minimal polynomial of $f\sqrt{D}\xi$ is
\[X^2+f\sqrt{D} X+f^2\mathrm{rad}(D)B.\]

In characteristic $2$, we also have the unique imaginary quadratic inseparable extension $K=k(\sqrt{T})$.
Its maximal order is $A[\sqrt{T}]=\mathbb{F}_q[\sqrt{T}]$ and every suborder is of the form \begin{equation}\label{Hilbert insep}\mathcal{O}=A[f\sqrt{T}],\end{equation} for some $f\in A$.

\subsection{Universal Drinfeld module}\label{subsecuniv}

In this subsection, we review the representability of certain functors associated to Drinfeld modules of rank 1, which will help in the proof of Theorem \ref{algorithm}.
We start by extending our defintion of a Drinfeld module.

In this subsection, $K$ denotes an imaginary quadratic extension of $k$ or $k$ itself.
We use $\mathcal{O}_K$ for its maximal $A$-order.
If $K_\infty=K\otimes k_\infty$, we denote by $d_\infty$ the degree the inertia degree of the field extension $K_\infty/k_\infty$.
The normalized valuation of $K_\infty$ will be denoted by $v_{\infty}$. 

Let $R$ be an $\mathcal{O}_K$-algebra with structure map $\delta_R\colon\mathcal{O}_K\to R$.
Consider the natural inclusion $i\colon R\to R\{\tau\}$ and the map $\mathcal{D}\colon R\{\tau\}\to R$ defined by taking the constant term of twisted polynomials.
A Drinfeld $\mathcal{O}_K$-module over $R$ is a ring morphism $\Phi\colon \mathcal{O}_K\to R\{\tau\}$ such that $\mathcal{D}\circ\phi=\delta_K$, $\Phi\neq i\circ\delta_K$, and the leading term of each non-zero polynomial in the image of $\Phi$ is a unit in $R$.
We say that $\Phi$ is of rank $r$ if
\[\deg\Phi_x=-rd_\infty v_\infty(x)=r[K_\infty\colon k_\infty]\log_q|x|\]
for all $x$ in $\mathcal{O}$.
We write \[\Phi_x=\delta_K(x)+\sum_{n=1}^{\deg\Phi_x}c_n(\Phi,x)\tau^n.\]

If $K=k$, $R$ is a field, and $r=2$, this definition agrees with the one given in Section \ref{prelim} since any morphism $A\to R$ is determined by $\Phi_T\colon=\Phi(T)$.
We can extend the definition of the $j$-invariant by $j(\Phi)=\frac{c_1(\Phi,T)^{q+1}}{c_2(\Phi,T)}\in R$.

Let $\Phi$ and $\Phi'$ be two Drinfeld $\mathcal{O}_K$-modules of rank 1.
We say that $\Phi$ and $\Phi'$ are \textit{equivalent} if there exists $w\in R^\times$ such that
\[ c_n(\Phi',x) = c_n(\Phi,x)w^{\frac{q^n-1}{q-1}} \quad\text{for each}\ n.\]

Suppose that $K/k$ is a quadratic extension; then $\Phi\mid_A$ and $\Phi'\mid_A$ are two Drinfeld $A$-modules of rank 2, and if $\Phi$ and $\Phi'$ are equivalent, then $c_1(\Phi',T)=c_1(\Phi,T)w$ and $c_2(\Phi',x)=c_2(\Phi,x)w^{q+1}$.
In particular, $\Phi\mid_A$ and $\Phi'\mid_A$ have the same $j$-invariant.

Let $F_1$ be the functor which associates to every $\mathcal{O}_K$-algebra $R$ the set of equivalence classes of Drinfeld $\mathcal{O}_K$-modules of rank $1$ over $R$.
Let $H$ be the Hilbert class field of $K$ and $\mathcal{O}_H$ its maximal $A$-order.

\begin{theorem}[Hayes]\label{universality}
Suppose that $K$ has a prime divisor of degree $1$, then $\mathcal{O}_H$ represents the functor $F_1$. 
\end{theorem}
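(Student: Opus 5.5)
We will prove Theorem \ref{universality} by constructing, for a fixed degree-one prime, a canonical normalised representative inside each equivalence class of rank $1$ Drinfeld $\mathcal{O}_K$-modules. We then show that the coefficients of a single such normalised module generate $\mathcal{O}_H$ and that every other normalised module is obtained from it by a unique ring map.

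The first step is to pick a prime divisor $\mathfrak{q}$ of $K$ of degree $1$. This lets us choose an element $x\in\mathcal{O}_K$ with $v_\infty(x)=-1$ in the appropriate sense, i.e.\ $\deg\Phi_x=d_\infty$, or more precisely an element whose $\Phi_x$ has minimal positive degree. Hayes' sign-normalisation is the device we would try here: using $w\in R^\times$, we rescale $c_n(\Phi,x)$ by $w^{(q^n-1)/(q-1)}$ so as to force the leading coefficient of $\Phi_x$ to equal $1$. The degree-one hypothesis should guarantee that the exponent $(q^{d}-1)/(q-1)$ relevant to the top coefficient is $1$, so that the normalisation is unique, with no residual $\mu$-ambiguity. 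Then $F_1(R)$ becomes the set of Drinfeld $\mathcal{O}_K$-modules of rank $1$ over $R$ whose $\Phi_x$ is monic. This set is visibly representable by the ring generated over $\mathcal{O}_K$ by the coefficients $c_n(\Phi,y)$ for $y$ in a generating set, modulo the relations $\Phi_y\Phi_z=\Phi_z\Phi_y$ and $\Phi_{yz}=\Phi_y\Phi_z$, with the leading coefficients inverted.

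The second step identifies this representing ring $\mathcal{R}$ with $\mathcal{O}_H$. Over $\mathbb{C}_\infty$, the normalised rank $1$ modules correspond, via lattices, to $\mathrm{Cl}(\mathcal{O}_K)$ (the rank-$1$ analogue of Proposition \ref{CMbijection}), once the sign condition is imposed. Their coefficients are integral over $\mathcal{O}_K$ and generate $H$, by Hayes' explicit class field theory. This gives $\mathcal{R}\otimes K\cong H$. For integrality and étaleness, we would show that $\mathcal{R}$ is finite étale over $\mathcal{O}_K$, i.e.\ that rank $1$ modules have good reduction everywhere and that there are no nontrivial infinitesimal deformations. The latter follows because the relation $\Phi_x a=a\Phi_x$ with $a\in R\{\tau\}$ forces uniqueness of the lift, by the standard argument that rank-$1$ modules are rigid, their tangent action at $\delta$ being determined. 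A finite étale $\mathcal{O}_K$-algebra whose generic fibre is $H$ and which is normal must then equal $\mathcal{O}_H$.

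The main obstacle is the normalisation step: proving that the degree-one hypothesis makes the choice of $w$ unique in an arbitrary $\mathcal{O}_K$-algebra $R$, not just in a field, so that the functor really has no automorphisms. Without this, one only gets a coarse moduli space, which is exactly why a level structure is needed in general. We would handle it by working with the leading coefficient $\mu$ of $\Phi_x$. A degree-one place lets one choose $x$ with $\deg\Phi_x=1$ in $\tau$, so that the transformation law becomes $\mu\mapsto\mu w^{1}$, which has the unique solution $w=\mu^{-1}$.
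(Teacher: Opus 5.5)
The paper gives no argument of its own here (it simply refers to Hayes' appendix), so your proof has to stand alone. It breaks at the normalisation step, on which everything else rests.

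\textbf{The normalisation claim is false.} You assert that a degree-one prime lets you choose $x\in\mathcal{O}_K$ with $\deg_\tau\Phi_x=1$. For a rank one module, $\deg_\tau\Phi_x=\dim_{\F_q}\mathcal{O}_K/x\mathcal{O}_K=-v_\infty(x)\deg\infty_K$. So $\deg_\tau\Phi_x=1$ means $x$ has a single simple pole, which forces $\mathcal{O}_K=\F_q[x]$: genus $0$ and class number $1$, where the theorem is empty. For instance, take $q$ odd and $K=k(\sqrt{D})$ with $D$ squarefree of degree $3$, so the place of $K$ at infinity has degree one. Every non-constant $x=a+b\sqrt{D}$ then has $\deg_\tau\Phi_x=\deg_T(a^2-Db^2)\geqslant 2$.

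Once $N=\deg_\tau\Phi_x\geqslant 2$, making the leading coefficient $\mu$ equal to $1$ needs $w\in R^\times$ with $w^{(q^N-1)/(q-1)}=\mu^{-1}$. Such a $w$ need not exist in $R$, and when it does it is only unique up to $(q^N-1)/(q-1)$-th roots of unity. This already fails over $\mathbb{C}_\infty$, where equivalence is the same as isomorphism. Since $\mathrm{Aut}(\Phi)=\F_q^\times$, the module $u^{-1}\Phi u$ is monic for $q^N-1$ values of $u$, giving $(q^N-1)/(q-1)$ distinct monic modules in each class.

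Consequently, $F_1$ is not the functor of modules with $\Phi_x$ monic. The ring you build has $\#\mathrm{Cl}(\mathcal{O}_K)\cdot(q^N-1)/(q-1)$ geometric points over $K$ rather than $\#\mathrm{Cl}(\mathcal{O}_K)$. Your Step 2 count inherits this error; it also silently replaces ``monic'' by Hayes' sign condition.

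\textbf{What the degree-one hypothesis actually gives.} Suppose the place of $K$ above $\infty$ has degree one. Riemann--Roch then gives $x_1,x_2\in\mathcal{O}_K$ with coprime $\tau$-degrees $N_1,N_2$ (for example, pole orders $2g$ and $2g+1$). Since $\gcd\bigl(\frac{q^{N_1}-1}{q-1},\frac{q^{N_2}-1}{q-1}\bigr)=\frac{q^{\gcd(N_1,N_2)}-1}{q-1}=1$, there are integers $a,b$ with $a\frac{q^{N_1}-1}{q-1}+b\frac{q^{N_2}-1}{q-1}=1$. The unit $\mu_\Phi(x_1)^a\mu_\Phi(x_2)^b$ then transforms by exactly $w$ under equivalence.

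Imposing $\mu_\Phi(x_1)^a\mu_\Phi(x_2)^b=1$ picks out a unique representative, with a unique $w$, in every class over every $\mathcal{O}_K$-algebra $R$. It also yields integrality: if $u^{-1}\Phi u$ has good reduction, this monomial forces $u^{q-1}$ to be a unit.

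A finite prime of degree one is useless for this. All $\tau$-degrees are multiples of $\deg\infty_K$; for example, they are all even when $\infty$ is inert in $K$.

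With this rigidification, the rest of your outline is a sensible plan: an affine representing ring, étaleness from the rigidity of rank one modules, generic fibre $H$ by Hayes' explicit class field theory, and hence $\mathcal{O}_H$. Those steps are currently only asserted, though, and would still need proofs.
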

\begin{proof}
    See \cite[Appendix]{Hayes79}.
\end{proof}

If $K$ does not have a prime divisor of degree 1, we can still construct a representable functor by adding level structures.
Let $I$ be an $\mathcal{O}_K$-ideal and $\Phi$ a Drinfeld $\mathcal{O}_K$-module of rank 1 over $R$.
We denote by $\Phi[I]$ the group scheme of $I$-division points of $\Phi$.
For any $R$-algebra $S$,
\[ \Phi[I](S) = \{s\in S\mid \Phi_\alpha(s)=0\ \text{for every}\ \alpha\in I\}. \]
The constant group scheme associated to $I^{-1}/\mathcal{O}_K$ will be denoted by $\underline{I^{-1}/\mathcal{O}_K}$. 

If $R$ is a $K$-algebra, a level structure on $\Phi$ is an isomorphism $\iota\colon \underline{I^{-1}/\mathcal{O}_K}\cong \Phi[I]$.

Let $F^1_I$ be the functor which associates to every $K$-algebra $R$ the set of isomorphism classes of Drinfeld $\mathcal{O}_K$-modules of rank $1$ with level $I$ structure over $R$.
Let $K_{I,\infty}$ be the maximal abelian extension of $K$ unramified outside $I$ and totally split at $\infty$.

\begin{theorem}[Drinfeld]\label{universalitywithlevel}
    If $I$ has at least two different prime divisors, then $K_{I,\infty}$ represents the functor $F^1_I$.
\end{theorem}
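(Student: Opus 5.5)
The plan is to realise $F^1_I$ as a finite étale $K$-scheme and then identify that scheme with $\mathrm{Spec}\,K_{I,\infty}$ using the analytic theory of rank $1$ Drinfeld modules and class field theory. This result is due to Drinfeld and the paper will presumably cite it; what follows is how I would reconstruct the argument.

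\emph{Step 1 (rigidity).} An automorphism of a rank $1$ Drinfeld $\mathcal{O}_K$-module over a connected $K$-algebra $R$ is multiplication by a constant $c\in\F_{q^{d_\infty}}^\times\subseteq R^\times$. It acts on $\Phi[I]$ by $x\mapsto cx$. If it preserves a level structure $\iota$, then $c\equiv1$ on $I^{-1}/\mathcal{O}_K$, so $c-1\in I$. Since $c-1$ is a constant, this forces $c=1$. Hence objects of $F^1_I$ have no nontrivial automorphisms, so $F^1_I$ is a sheaf for the fpqc topology. Representability can then be checked étale-locally.

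\emph{Step 2 (local representability).} Fix the relevant degree of $\Phi_x$ for a generator $x$ of $\mathcal{O}_K$ over $A$ (or for finitely many generators). The data of $\Phi$ is then a finite tuple of coefficients, with unit leading coefficients, satisfying the polynomial commutation relations. This gives an affine scheme of finite type over $K$. Adding a level structure is a closed-and-open condition: $\Phi[I]$ is finite étale over a $K$-algebra because $\Phi_\alpha$ has invertible derivative $\alpha$. Quotienting by the (now free) action of the equivalence group $R^\times$ gives a finite étale $K$-scheme $M_I$ representing $F^1_I$.

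Here the hypothesis that $I$ has two distinct prime divisors is used. It lets one run the same construction over $\mathcal{O}_K[1/\mathfrak{l}]$ for each prime $\mathfrak{l}\mid I$, so that the level structure at the complementary prime is étale there. Gluing these gives a smooth model whose generic fibre is $M_I$. This is the step where I expect the real technical work, namely controlling étaleness and finiteness, and I would follow Drinfeld's original argument there.

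\emph{Step 3 (points and Galois action).} Over $\C_\infty$, rank $1$ Drinfeld $\mathcal{O}_K$-modules correspond to rank $1$ lattices $\xi\mathfrak{a}$, and level-$I$ structures correspond to generators of $\mathfrak{a}I^{-1}/\mathfrak{a}$. Taking isomorphism classes, the geometric points of $M_I$ form a torsor under the ray class group $\mathrm{Pic}_I(\mathcal{O}_K)$ of conductor $I$ in which $\infty$ is required to split. The main theorem of explicit class field theory for $K$ (Hayes' reciprocity $\Phi\mapsto\mathfrak{b}*\Phi$ compared with Frobenius at primes $\mathfrak{b}$ coprime to $I$) shows that $\mathrm{Gal}(K^{sep}/K)$ acts on these points through the Artin map onto exactly this group, simply transitively.

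Therefore $M_I$ is connected, so $M_I=\mathrm{Spec}\,L$ for a field $L$. Moreover $L/K$ is abelian with Galois group $\mathrm{Pic}_I(\mathcal{O}_K)$, is unramified outside $I$ (by good reduction away from $I$, from Step 2), and is totally split at $\infty$ (by the analytic uniformisation over $K_\infty$). Hence $L=K_{I,\infty}$, as claimed.
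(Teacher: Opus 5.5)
The paper gives no argument here: it cites \cite[Theorem 3.7.2]{Tha04}, which is Drinfeld's integral representability theorem, and the statement is its generic fibre. Your reconstruction is the standard Drinfeld route and is sound as an outline: rigidity, a finite \'etale moduli scheme $M_I$ over $K$, its geometric points forming a $\mathrm{Pic}_I(\mathcal{O}_K)$-torsor, and Galois acting through the Artin map. It makes visible what the $K$-algebra version actually needs, but it does not lower the depth of the result. The arithmetic content lies entirely in the reciprocity law you import in Step 3 (Frobenius at $\mathfrak{p}\nmid I$ acts as $\mathfrak{p}*$). The geometric content lies in the \'etaleness you defer in Step 2; over a field, finite type plus \'etale already gives finiteness, so the real input is that rank $1$ modules have no infinitesimal deformations. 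In Step 2 the group to divide out is the isomorphisms $u\in R^\times$, acting by $u\Phi u^{-1}$ and $u\iota$, not the paper's coarser ``equivalence''. You can also avoid the quotient altogether: for $0\neq\beta\in I^{-1}/\mathcal{O}_K$ the section $\iota(\beta)$ is disjoint from the zero section, hence a unit. So every pair is uniquely isomorphic to one with $\iota(\beta)=1$.

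You misplace the role of the hypothesis that $I$ has two distinct prime divisors. Your Step 1 already gives rigidity for every proper $I$, since a constant $c$ with $c-1\in I$ equals $1$. So over $K$-algebras the hypothesis is superfluous; it survives only because the cited result is integral. There it is needed because at a point of characteristic $\mathfrak{l}\mid I$ the $\mathfrak{l}$-torsion of a rank $1$ module is infinitesimal and the Drinfeld level structure on it vanishes. Rigidity must then come from another prime factor of $I$. Relatedly, the glued integral model is not smooth over the primes where $K_{I,\infty}/K$ ramifies, which in general include those dividing $I$. For ``unramified outside $I$'' you only need the \'etale model over $\mathcal{O}_K[1/I]$, which exists for any proper $I$. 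In fact this property and the splitting at $\infty$ are redundant once you know Galois acts through the Artin map onto $\mathrm{Pic}_I(\mathcal{O}_K)$, since that already characterises $K_{I,\infty}$. A last nit: automorphisms form $\mathcal{O}_K^\times$, the nonzero constants of $K$. For $q=3$ and $K=k(\sqrt{2T^2+1})$ this is $\F_3^\times$ although $d_\infty=2$, so $\F_{q^{d_\infty}}^\times$ need not lie in $R$; rigidity is unaffected.
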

\begin{proof}
    See \cite[Theorem 3.7.2]{Tha04}.
\end{proof}

\subsection{Algorithm}\label{subsecalgo}
Now we present an algorithm to compute Drinfeld singular moduli.
It is inspired by previous works on this topic, see in particular \cite{DuHa94, Mac10, Mac11}.
For an imaginary quadratic order $\mathcal{O}$, the Hilbert class polynomial $H_\mathcal{O}(X)$ is the minimal polynomial over $A$ of any singular moduli with CM by $\mathcal{O}$.

\begin{proof}[Proof of Theorem \ref{algorithm}]
    Write $\mathcal{O}=A[c_0]$ with $c_0$ as in \eqref{Hilbert odd}, \eqref{Hilbert even}, or \eqref{Hilbert insep} according to each case.
    Let $\Phi_T=T+a\tau+b\tau^2$ be a rank 2 Drinfeld module with complex multiplication by an order containing $\mathcal{O}$.
By \cite[Theorem 5.2.11]{Papi} this is the case if and only if $\Phi_T$ admits an isogeny of the form \begin{equation}\label{isogeny}
        \phi=c_0+c_1\tau+...+c_n\tau^n
    \end{equation} with $n=\deg N_{K/k}(c_0)$.
    We can assume assume that $a\neq0$, otherwise we know $j=0$ which has complex multiplication by the maximal order $\mathbb{F}_{q^2}[T]$ of $\mathbb{F}_{q^2}(T)$ and we set $P_{\mathbb{F}_{q^2}[T]}(X)=X$.
    Since Drinfeld modules corresponding to homothetic lattices have the same endomorphism rings, we can scale $\Phi_T$ so that without loss of generality $a = 1$; we then have $\phi_T = T +\tau +j^{-1}\tau^2$ with $j=j(\Phi_T)$.
    Comparing the coefficients of the powers of $\tau$ in the equation $\psi\Phi_T=\Phi_T\psi$, we arrive at the following system of equations:
    \begin{align}
        c_0 T &= T c_0 \label{E0} \tag{E$0$} \\
        c_0 + c_1 T^q &= c_0^q + c_1 T \label{E1} \tag{E$1$} \\
        c_{k-2} j^{-q^{k-2}} +c_{k-1} +c_k T^{q^k} &= j^{-1} c_{k-2}^{q^2} +c_{k-1}^q +T c_k \qquad\text{for} \ 2 \leqslant k \leqslant n \label{Ek} \tag{E$k$} \\
        c_{n-1} j^{-q^{n-1}} +c_n &= j^{-1} c_{n-1}^{q^2} +c_n^q \label{En+1} \tag{E$n+1$} \\
        c_n j^{-q^n} &= j^{-1} c_n^{q^2} \label{En+2} \tag{E$n+2$}
    \end{align}
    Beginning at \eqref{E1}, we can express $c_1$ in terms of $T$ and $c_0$; specifically there exists $C_1(Y)=\frac{Y^q-Y}{T^q-T} \in k[Y]$ such that $c_1=C_1(c_0)$.
   
    Using (E2) to (En), we recursively obtain $C_k(X,Y)\in k[X^{-1},Y]$ such that $c_k=C_k(j,c_0)$.
    Replacing $c_{n-1}$ and $c_n$ in \eqref{En+1} and \eqref{En+2}, we get two polynomials $P_i(X,Y)\in k[X^{-1},Y]$ such that the last two equations are of the form $P_i(j,c_0)=0$; clearing the powers of $X$ in the denominator of the greatest common divisor of $P_1(X,c_0)$ and $P_2(X,c_0)$ normalized to have constant term equal to $1$, we arrive at a monic polynomial $P_\mathcal{O}(X) \in K[X]$.
    If $\mathcal{O}\subsetneq\mathbb{F}_{q^2}[T]$, replace $P_\mathcal{O}(X)$ by $XP_\mathcal{O}(X)$.
    By construction, each singular moduli with complex multiplication by an order containing $\mathcal{O}$ satisfies $P_\mathcal{O}(j)=0$.
    Conversely, if $j\neq0$ is a root of the polynomial $P_\mathcal{O}$, then it satisfies both \eqref{En+1} and \eqref{En+2}, and hence satisfies $\psi \phi_T = \phi_T \psi$ with $\phi$ of the form \eqref{isogeny} and $c_k=C_k(j,c_0)$ ($1\leqslant k\leqslant n$); since the roots of $P_\mathcal{O}$ are integral, we conclude that $P_\mathcal{O}(X)\in \mathcal{O}_K[X]$.
    This proves item 1.

    Now we prove item 2.
    Observe that $C_1$ satisfies $C_1(-Y)=-C_1(Y)$ and $C_1(Y+b)=C_1(Y)$ for all $b$ in $k$.
    From (E2) to \eqref{En+2}, one can recursively see that $C_k$ ($2\leqslant k\leqslant n$) and $P_i(X,Y)$ ($i=1,2$) inherit these properties.
    Assume first that $q$ is odd.
    Specializing to $Y=c_0=\sqrt{D}$ we obtain from $P_i(X,-c_0)=-P_i(X,c_0)$ that $P_i(X,c_0) \in c_0 k[X]$.
    In particular, when considering the greatest common divisor of $P_1(X,c_0)$ and $P_2(X,c_0)$, we obtain that $P_\mathcal{O}\in \mathcal{O}_K[X]\cap k[X]=A[X]$.
    Now we consider the even characteristic case.
    This time we specialize to $Y=c_0$ a root of $X^2+bX+c$ with $b,c\in A\smallsetminus\{0\}$.
    Then $\sigma(c_0)=c_0+b$ is the Galois conjugate of $c_0$.
    Note that $\mathcal{O}=A[c_0]=A[\sigma(c_0)]$; thus $P_i(X,c_0)=0$ ($i=1,2$) if and only if $P_i(X,\sigma(c_0))=0$ ($i=1,2$).
    Since $P_i(X,\sigma(c_0))=P_i(X,c_0)+P_i(X,b)$, we must have $P_i(X,b)=0$.
    It follows that $P_i(X,c_0)$ is invariant under $\sigma$, \textit{i.e.} $P_i(X,c_0)$ belongs to $k[X^{-1}]$ and thus $P_\mathcal{O}(X)\in A[X]$.
    The last statement in item 2 is a consequence of $P_\mathcal{O}(X)$ having integral coefficients and of item 1.
    Indeed, let $\mathcal{O}_1,\dotsc,\mathcal{O}_m$ be an enumeration of the orders containing $\mathcal{O}$.
    There exists a positive integer $r(\mathcal{O}_1)$ such that $H_{\mathcal{O}_1}^{r(\mathcal{O}_1)}$ divides $P_\mathcal{O}(X)$ and the roots of $P_\mathcal{O}(X)/H_{\mathcal{O}_1}^{r(\mathcal{O}_1)}$ do not have CM by $\mathcal{O}_1$.
    With the same reasoning, there exists $r(\mathcal{O}_2)$ such that $H_{\mathcal{O}_2}^{r(\mathcal{O}_2)}$ divides $P_\mathcal{O}(X)/H_{\mathcal{O}_1}^{r(\mathcal{O}_1)}(X)$ and the roots of $P_\mathcal{O}(X)/(H_{\mathcal{O}_1}^{r(\mathcal{O}_1)}H_{\mathcal{O}_2}^{r(\mathcal{O}_2)}(X))$ do not have CM by $\mathcal{O}_1$ nor $\mathcal{O}_2$.
    Inductively, there exist positive integers $r(\mathcal{O}_i)$ for $1\leqslant i\leqslant m$ such that $P_\mathcal{O}(X) / (H_{\mathcal{O}_1}^{r(\mathcal{O}_1)} H_{\mathcal{O}_2}^{r(\mathcal{O}_2)} \dotsm H_{\mathcal{O}_m}^{r(\mathcal{O}_m)}(X))$ is constant.
    Since the Hilbert class polynomials and $P_\mathcal{O}$ are monic, this constant must be $1$.

   Item 3 is a direct consequence of the fact that in the inseparable case, $\mathcal{O}_K=\mathbb{F}_q[\sqrt{T}]$.
    Since $q$ is even, taking squares respects addition and then $P^2_\mathcal{O}\in \mathbb{F}_q[T]=A$.
    The product formula is proven exactly as in item 2.

Finally, we prove the last part of the statement.
In follows trivially that for $K=\F_{q^2}(T)$, one has $P_{\mathcal{O}_K}(X)=X=H_{\mathcal{O}_K}(X)$.
Let $K\neq\mathbb{F}_{q^2}(T)$ be an imaginary quadratic field and $\mathcal{O}_K$ its maximal order.
In the separate (resp. inseparable) case, denote $P_{\mathcal{O}_K}$ (resp. $P_{\mathcal{O}_K}^2$) simply by $P$. Then, we know that $P=H_{\mathcal{O}_K}^{r(\mathcal{O}_K)}$ for some positive integer $r(\mathcal{O}_K)$. Denote by $x$ the image of $X$ in the $K$-algebra $R=K[X]/P(X)$.
We will show that $H_{\mathcal{O}_K}(x)=0$ in $R$, which implies that $r(\mathcal{O}_K)=1$.
Since $(P,X)=1$, the element $x$ is invertible in $R$.
By construction of $P_{\mathcal{O}_K}(X)$, the rank 2 Drinfeld $A$-module over $R$ given by $\Phi_T=T+\tau+x^{-1}\tau^2$ has CM by $\mathcal{O}_K$.
In particular, the map $\mathcal{O}_K\to R\{\tau\}$ sending $T\mapsto\phi_T$ and $c_0\to\phi$ defines a rank 1 Drinfeld $\mathcal{O}_K$-module $\Phi$. 

If $K$ has a degree $1$ prime, by Theorem \ref{universality} there exists $\widetilde{\Phi}$, a Drinfeld $\mathcal{O}_K$-module of rank 1 over $\mathcal{O}_H$, and a morphism $\rho\colon \mathcal{O}_H\to R$ of $\mathcal{O}_H$-algebras sending $\widetilde{\Phi}$ to $\Phi$.
By definition, $j(\widetilde{\Phi}_T)$ satisfies the equation $H_{\mathcal{O}_K}(X)=0$.
In particular, $x=j(\Phi_T)=\rho(j(\widetilde{\Phi}_T))$ satisfies $H_{\mathcal{O}_K}(x)=0$ in $R$ and we are done.

In the general case, we can proceed as follows.
Let $\mathfrak{p} \in A$ be any prime that splits in $K$, then the ideal $I=\mathfrak{p}\mathcal{O}_K$ is the product of two different primes in $\mathcal{O}_K$.
Let $p$ be the monic generator of $\mathfrak{p}$, then $\Phi[I]=\mathrm{Spec}(R')$ with $R':=R[Z]/\Phi_p(Z)$.
Let $z$ be the image of $Z$ in $R'$. We claim that for $\alpha\in \mathcal{O}_K$, $\Phi_{\alpha}(z)=0$ if and only if $\alpha$ is in $I$.
We can easily see that if $\alpha=p\beta$ with $\beta$ in $\mathcal{O}_K$, then $\phi_{\alpha}(z)=\phi_{\beta}(\phi_p(z))=\phi_{\beta}(0)=0$ in $R'$.
On the other hand, $\Phi_\alpha(z)=0$ is equivalent to $\Phi_\alpha(Z)$ being divisible by $\Phi_p(Z)$ in $R[Z]$.
This implies that for any geometric point $R\to\overline{K}$, $\Phi[I](\overline{K})\subseteq \Phi[\alpha](\overline{K})$.
By \cite[Corollary A.15]{Papi} this is only possible if there exists an injective map from $\mathcal{O}_K/I$ to $\mathcal{O}_K/\alpha\mathcal{O}_K$.
This implies that $\alpha$ is divisible by the two primes of $K$ dividing $\mathfrak{p}$ and thus $\alpha$ is in $I$.
It follows that for every $\alpha\in I^{-1}/\mathcal{O}_K$, the set of well defined elements $\Phi_{p\alpha}(z)\in R'$ form the $\#\mathcal{O}/I=q^{2\deg p}=\deg_Z\Phi_p(Z)$ different roots of $\Phi_p(Z)$ in $R'$.
Thus, $\Phi_p$ splits completely in $R'$ and we have
\[ R'[W]/\Phi_p(W)\cong \prod_{\alpha\in I^{-1}/\mathcal{O}_K}R'[W]/(W-\phi_{p\alpha}(z))\cong\prod_{\alpha\in I^{-1}/\mathcal{O}_K}R'.\]
This isomorphism defines a level structure $\iota\colon \underline{I^{-1}/\mathcal{O}_K}\cong \Phi[I]$ over $R'$.
By Theorem \ref{universalitywithlevel}, there exists $(\widetilde{\Phi},\widetilde{\iota})$ a rank 1 Drinfeld $\mathcal{O}_K$-module over $K_{I,\infty}$ with $I$-level structure, and a morphism $\rho\colon K_{I,\infty}\to R'$ of $K$-algebras sending $(\widetilde{\Phi},\widetilde{\iota})$ to $(\Phi,\iota)$.
We know that $j(\widetilde{\Phi}_T)$ satisfies the equation $H_{\mathcal{O}_K}(X)=0$.
In particular, $x=j(\Phi_T)=\rho(j(\widetilde{\Phi}_T))$ satisfies $H_{\mathcal{O}_K}(x)=0$ in $R'$.
Since $R\to R'$ is injective, $H_{\mathcal{O}_K}(x)=0$ in $R$ and this finishes the proof.
\end{proof}

\begin{rem}
In the case where $K$ is inseparable, we opted to work with $P_{\mathcal{O}_K}^2$ to obtain a polynomial over $A$ since we are interested in the norm of the singular moduli.
Nonetheless, the same reasoning shows that $P_{\mathcal{O}_K}=\mathrm{irr}(j,\mathcal{O}_K)$ for any singular modulus $j$ with CM by $\mathcal{O}_K$, and in general, for an order $\mathcal{O}\subseteq\mathcal{O}_K$, the polynomial $P_\mathcal{O}$ satisfies an analogous product formula with respect to the minimal polynomials over $\mathcal{O}_K$ of singular moduli with CM by $\mathcal{O}'\supseteq \mathcal{O}$.
\end{rem}

\begin{rem} \label{rem:r=1}
    We have not been able to find in the literature a version of Theorem \ref{universalitywithlevel} addressing the case of non-maximal orders. If such a statement holds true, one could prove in a similar fashion that $r(\mathcal{O})=1$ for every order $\mathcal{O}$. Let us add that this has been the case in all our explicit computations, including for non-maximal orders.  
\end{rem}

Let us perform the computations in the case $\deg_T D = 1$ to illustrate the method.

\begin{prop}\label{n=1}
Let $q$ be odd and consider a discriminant $D = AT+B$ for $A,B \in \F_q$, and $A\neq0$.
Then the element $j = A^{-1}D^{\frac{q+1}{2}}(1+D^\frac{q-1}{2})^{q+1} \in \F_q[T]$ corresponds to a Drinfeld module of rank 2 with CM by $\sqrt{D}$.
\end{prop}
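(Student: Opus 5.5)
The plan is to run the algorithm from the proof of Theorem \ref{algorithm} by hand in the simplest case $n=\deg N_{K/k}(\sqrt{D})=\deg D=1$. Here $K=k(\sqrt{D})$ is imaginary by condition (\ref{cond-imaginary})(i). We normalise $\Phi_T=T+\tau+j^{-1}\tau^2$ and look for an isogeny $\phi=c_0+c_1\tau$ with $c_0=\sqrt{D}$. By \cite[Theorem 5.2.11]{Papi}, $\Phi$ has CM by an order containing $A[\sqrt{D}]$ if and only if such a $\phi$ commutes with $\Phi_T$. For $n=1$ the system consists of (E1), then (E$n+1$) $=$ (E2), which reads
\[ c_0 j^{-1}+c_1 = j^{-1}c_0^{q^2}+c_1^q, \]
and (E$n+2$) $=$ (E3), which reads $c_1 j^{-q}=j^{-1}c_1^{q^2}$. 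Since $D$ has degree $1$, $A[\sqrt D]$ is the maximal order of $K$, so CM by an order containing $\sqrt{D}$ means CM by $\sqrt{D}$. Alternatively, a direct verification that $\phi$ commutes with $\Phi_T$ suffices for the statement.

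\textbf{Step 1 (solve (E1)).} Put $\varepsilon=D^{\frac{q-1}{2}}\in A$, so that $c_0^q=\varepsilon c_0$. Then (E1) gives
\[ c_1=\frac{c_0^q-c_0}{T^q-T}=\frac{(\varepsilon-1)c_0}{T^q-T}. \]
The key simplification is that $D$ is linear, so $D^q-D=A(T^q-T)$ (recall $A\in\F_q$ here). On the other hand,
\[ D^q-D=D(D^{q-1}-1)=D(\varepsilon-1)(\varepsilon+1). \]
Dividing out gives the clean formula
\[ c_1=\frac{A\,c_0}{D(\varepsilon+1)}=\frac{A}{\sqrt{D}\,(1+\varepsilon)}. \]

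\textbf{Step 2 (solve (E3)).} Since $c_1\neq0$, equation (E3) becomes $j^{q-1}=c_1^{-(q^2-1)}$. Hence $j=\zeta\, c_1^{-(q+1)}$ for some $\zeta\in\F_q^\times$. Substituting Step 1 and using $\sqrt{D}^{\,q+1}=D^{\frac{q+1}{2}}$ gives
\[ j=\zeta A^{-(q+1)}D^{\frac{q+1}{2}}(1+\varepsilon)^{q+1}=\zeta A^{-2}D^{\frac{q+1}{2}}(1+D^{\frac{q-1}{2}})^{q+1}, \]
where $A^{q+1}=A^2$ because $A\in\F_q$. So it remains to show $\zeta=A$.

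\textbf{Step 3 (fix the constant with (E2)).} This is the step I expect to be the only real obstacle. Equation (E2) is linear in $j^{-1}$:
\[ j^{-1}\bigl(c_0-c_0^{q^2}\bigr)=c_1^q-c_1, \qquad\text{with } c_0^{q^2}=D^{\frac{q^2-1}{2}}c_0. \]
I would compute both sides explicitly in terms of $\sqrt{D}$ and $\varepsilon$, using $c_1^q=A/(\varepsilon\sqrt{D}(1+\varepsilon^q))$ and again the identity $D^q-D=A(T^q-T)$, or equivalently $\varepsilon^{q}\varepsilon=D^{\frac{q^2-1}{2}}$. Comparing with Step 2 should force $\zeta=A$. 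The risk is sign or unit bookkeeping, e.g.\ factors such as $(-1)^{\frac{q-1}{2}}$ or $A^{\frac{q-1}{2}}$. If these arise, I would recheck Step 1, since $\varepsilon$ depends on the choice of $c_0$ only through $c_0^{q-1}$ and so is sign-independent.

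Finally, I would note that $j\in\F_q[T]$ is consistent with $h=1$ for $\deg D=1$. As a sanity check of the constant, I would also compare $\deg j=\frac{q+1}{2}+\frac{q^2-1}{2}=\frac{q+1}{2}\,q$ with Brown's Theorem \ref{Bro} (case $\deg D$ odd, $n=1$).
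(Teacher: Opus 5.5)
Your route is the paper's: solve (E1) via $D^q-D=A(T^q-T)$ to get $c_1=A/(\sqrt{D}+\sqrt{D}^{\,q})$, then use that (E2) is linear in $j^{-1}$. Your Step 2 via (E3) is a useful addition. The paper only uses (E1) and (E2) and leaves (E3) implicit, so your version would make the direct check that $\sqrt{D}+c_1\tau$ is an endomorphism self-contained. As written, though, there is a gap. Step 3 is only announced (``should force $\zeta=A$''), yet it is the entire content of the paper's proof. It is also the only place where the constant $A^{-1}$ in the statement is determined. Your sanity check cannot replace it: Brown's theorem only controls $\deg j$ and cannot see a factor $\zeta\in\F_q^\times$.

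The computation closes in one line, with no sign or unit issues. Put $s=\sqrt{D}+\sqrt{D}^{\,q}=\sqrt{D}(1+\varepsilon)$, so $c_1=A/s$. Since $A\in\F_q$ and Frobenius is additive, $c_1^q=A/s^q$ with $s^q=\sqrt{D}^{\,q}+\sqrt{D}^{\,q^2}$. Hence
$c_1-c_1^q=A(s^q-s)/s^{q+1}=A\bigl(\sqrt{D}^{\,q^2}-\sqrt{D}\bigr)/s^{q+1}$.
Here $\sqrt{D}^{\,q^2}-\sqrt{D}=\sqrt{D}(\varepsilon^{q+1}-1)\neq0$ because $\varepsilon$ is non-constant. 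So (E2) gives $j=A^{-1}s^{q+1}=A^{-1}D^{\frac{q+1}{2}}(1+\varepsilon)^{q+1}$. Then $jc_1^{q+1}=A^{-1}A^{q+1}=A^q=A$, i.e.\ $\zeta=A\in\F_q^\times$, so (E3) holds as well. With (E1)--(E3) verified and your remark that $A[\sqrt{D}]$ is the maximal order, the proof is complete.
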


\begin{proof}
The equations used in the proof of Theorem \ref{algorithm} become
\begin{align}
    \sqrt{D} + c_1 T^q &= \sqrt{D}^q + c_1 T, \label{E11} \tag{E$1$} \\
    \sqrt{D} j^{-1} +c_1 &= j^{-1} {\sqrt{D}}^{q^2} +c_1^q, \label{E22} \tag{E$2$} \\
    c_1 j^{-q} &= j^{-1} c_1^{q^2}. \label{E33} \tag{E$3$}
\end{align}
From \eqref{E11}, we get $c_1 = \frac{(\sqrt{D}^q-\sqrt{D})}{T^q-T}$.
Using the relation $(\sqrt{D}^q-\sqrt{D})(\sqrt{D}^q+\sqrt{D})=D^q-D=A(T^q-T)$ we can rewrite $c_1 = \frac{A}{\sqrt{D}^q+\sqrt{D}}$.
Replacing in \eqref{E22} we obtain
% \begin{gather*}
%     \sqrt{D}+j\left(\frac{A}{\sqrt{D}^q+\sqrt{D}}\right)=\sqrt{D}^{q^2}+j\left(\frac{A}{\sqrt{D}^{q^2}+\sqrt{D}^q}\right) \\
%     \implies j =\frac{1}{A}(\sqrt{D}^{q^2}-\sqrt{D})\frac{(\sqrt{D}^q+\sqrt{D})^{q+1}}{\sqrt{D}^{q^2}+\sqrt{D}^q-\sqrt{D}^q-\sqrt{D}} = A^{-1} D^{\frac{q+1}{2}}(1+D^\frac{q-1}{2})^{q+1} \in \F_q[T].
%     \qedhere    
% \end{gather*}
% we do this so that the QED symbol will be flushed right correctly
\[
\begin{gathered}[b]
    \sqrt{D}+j\left(\frac{A}{\sqrt{D}^q+\sqrt{D}}\right)=\sqrt{D}^{q^2}+j\left(\frac{A}{\sqrt{D}^{q^2}+\sqrt{D}^q}\right) \\
    \implies j =\frac{1}{A}(\sqrt{D}^{q^2}-\sqrt{D})\frac{(\sqrt{D}^q+\sqrt{D})^{q+1}}{\sqrt{D}^{q^2}+\sqrt{D}^q-\sqrt{D}^q-\sqrt{D}} = A^{-1} D^{\frac{q+1}{2}}(1+D^\frac{q-1}{2})^{q+1} \in \F_q[T].
\end{gathered} \qedhere
\]
\end{proof}

\begin{rem}\label{prime D odd degree}
    From Proposition \ref{n=1} we see that there exist Drinfeld singular moduli associated to prime $D$ with odd degree $1$.
    However, these are not $S_0$-units, since for a prime factor $\mathfrak{p}$ of $j$,
    \[ \mathfrak{p} \mid j \implies \mathfrak{p} \mid D \left(1+D^{\frac{q-1}{2}}\right) \left(1-D^{\frac{q-1}{2}}\right) = D-D^q = A (T-T^q) = -A \prod_{\ell \in \F_q} (T-\ell), \]
    so $\mathfrak{p}$ is of odd degree $1$.
\end{rem}

Let us now exhibit an example where $\mathcal{O}$ is not maximal as an illustration of Theorem \ref{algorithm}.

\begin{ex}
    Let $q = 3$ and let $\mathcal{O} = Q[\sqrt{D}]$ where $D = T^3$.

    In this case, $n = 3$ and the polynomials $c_k(X)$ found in the execution of the algorithm are as follows:
    \begin{align*}
        c_0 &= \sqrt{D} \\
        c_1 &= \frac{T^2 + T + 1}{T^2 + T} \sqrt{D} \\
        c_2 &= \frac{T^8 + T^4 + 1}{T^5 + T} \sqrt{D} X + \frac{T^4 + T^3 + 1}{T^{10} + T^9 + T^7 + 2T^6 + T^5 + T^3 + T^2} \sqrt{D} \\
        c_3 &= \scalebox{0.95}{\begin{multlined}[t]
            \frac{T^{13} + T^{12} + 1}{T^{28} + T^{27} + T^{16} + 2T^{15} + T^{14} + T^3 + T^2} \sqrt{D} X^3 + \frac{T^{13} + T^9 + 1}{T^{28} + T^{24} + T^{19} + 2T^{15} + T^{11} + T^6 + T^2} \sqrt{D} X \\
            + \frac{2T^8 + T^3 + 2T^2 + T + 2}{T^{41} + T^{38} + T^{32} + 2T^{29} + T^{28} + T^{26} + T^{25} + T^{20} + T^{19} + T^{17} + 2T^{16} + T^{13} + T^7 + T^4} \sqrt{D}
        \end{multlined}}
    \end{align*}
    The polynomials $P_1(j,\sqrt{D})$ and $P_2(j,\sqrt{D})$ are too large to be displayed here.
    However, their greatest common divisor, $P_\mathcal{O}(j)$, is found to be the following:
    \begin{align*}
         &\mathrel{\phantom{=}} P_\mathcal{O}(j) \\
         &= \begin{multlined}[t]
             j^4 + (2T^{18} + 2T^{15} + 2T^{13} + T^{12} + 2T^{11} + T^9 + T^8 + 2T^6 + 2T^5 + 2T^3 + T) j^3 \\
             + (2T^{22} + T^{21} + 2T^{20} + T^{19} + 2T^{14} + T^9 + 2T^8 + T^7 + 2T^6) j^2 \\
             + 2 T^2 (T + 1)^{12} (T^{14} + T^{12} + 2T^{10} + T^6 + T^4 + 1) j + T^4 (T + 1)^{20} (T^3 + T^2 + 2T + 1)^4
         \end{multlined} \\
         &= \begin{multlined}[t]
             [j + 2 (T^6 + T^5 + T^3 + T^2)] \\
             \times \scalebox{0.85}{\begin{multlined}[t]
                 [j^3 + (2T^{18} + 2T^{15} + 2T^{13} + T^{12} + 2T^{11} + T^9 + T^8 + T^2 + T) j^2 \\
             + (2T^{24} + 2T^{23} + 2T^{22} + 2T^{21} + 2T^{18} + 2T^{17} + T^{16} + T^{15} + T^{14} + T^{12} + 2T^{11} + T^{10} + T^9 + T^5 + 2T^4 + T^3) j \\
             + 2T^{30} + T^{29} + 2T^{26} + T^{24} + 2T^{23} + 2T^{21} + 2T^{20} + 2T^{18} + 2T^{14} + 2T^9 + T^8 + 2T^6 + 2T^2]
             \end{multlined}}
         \end{multlined}
    \end{align*}
    As we see, the polynomial $P_\mathcal{O}$ factorises into two irreducible factors, of degree $1$ and $3$, being the Hilbert polynomials corresponding to the orders $\mathcal{O}' = A[\sqrt{T}] \supset A[\sqrt{T^3}]$ and $\mathcal{O} = A[\sqrt{T^3}]$ respectively.
    In both of these cases, $r(\mathcal{O}') = 1$.

    Note that the first factor gives a value of $j = T^6+T^5+T^3+T^2 = T^2 (1+T)^4$, which by Proposition \ref{n=1} is the $j$ invariant corresponding to a Drinfeld module of rank $2$ with CM by $\sqrt{T}$; the decomposition into two factors is as in Theorem \ref{algorithm}.
\end{ex}

\subsection{Explicit Drinfeld singular moduli}
The algorithm in Theorem \ref{algorithm} was implemented in SageMath \cite{sagemath} in a GitHub repository \cite{Code} (for recent developments concerning Drinfeld modules in SageMath, one may refer to \cite{ABCLNP26}).
In Figures
\ref{fig:compu_q3n2},
\ref{fig:compu_q3n3},
\ref{fig:compu_q3n4_squarefree},
\ref{fig:compu_q5n2},
\ref{fig:compu_q2n3},
and
\ref{fig:compu_q2n5}
we exhibit some tables of $P_{A[\sqrt{D}]}$ for small values of $q$ and squarefree $D$, and in Figures \ref{fig:compu_q3n3_squareful}, \ref{fig:compu_q3factor}, and \ref{fig:compu_q3n4_squareful} we exhibit some similar tables where $D$ is not squarefree; in each case, $P_\mathcal{O}$ was monic and integral in $\F_q(T,\sqrt{D})$, so to investigate for which sets $S$ of primes the element $j$ can be an $S$-unit, we need only find the prime factors of the (norm of the) constant coefficient of $P_\mathcal{O}$.
In addition, in the case $q=2$ the polynomials $P_\mathcal{O}$ found do not have coefficients in $\F_2[T]$, however $P_\mathcal{O}^2$ does.

A modified version of the algorithm in Theorem \ref{algorithm} was also implemented in SageMath in \cite{Code} where the polynomial $P_\mathcal{O}$ is computed modulo a set $\mathfrak{S}$ of `small primes' in $\F_q[T]$ and then combined using the Chinese Remainder Theorem, inspired by \cite{Sut11}.
The only conditions on $\mathfrak{S}$ are that each prime has degree greater than $n$, since to solve for each $c_k$ in terms of $c_{k-2}$ and $c_{k-1}$ one needs to divide by $T^{q^k}-T$ for each $k \leqslant n$, and that the (norm of the) product of the primes in $\mathfrak{S}$ has to be greater than that of the largest coefficient in $P_\mathcal{O}$; to estimate these coefficients, we used the descriptions of the sets $S_D$ in \cite[pp. 5-6]{AABP26} combined with Theorem \ref{Bro} due to Brown.

While we do not yet have computational complexity bounds, in practice this was found to be significantly faster than the straightforward calculation.
Given that the calculations of $P_\mathcal{O}$ modulo each prime are independent of each other, they can also be performed in parallel, yielding another speedup.

In addition, in Figure \ref{fig:compu_q2proper} we include some examples of $P_\mathcal{O}$ for the characteristic $2$ case \eqref{Hilbert even}.
Note that in these tables, since all the prime factors presented have odd degree, the $j$-invariants are all examples of $S$-units for $S$ a finite set of primes of odd degree, but none of them is an $S_0$-unit.

Since some $D$ are equivalent under automorphisms of $\F_q[T]$ like $T \mapsto aT+b$ for $a \in \F_q^\times$ and $b \in \F_q$, we include only one $D$ from each equivalence class.
Additionally, we exclude some $D$ which would not yield any imaginary quadratic extensions, as in \cite[Proposition 14.6]{Rosen}.

% Finally, we remark that in all computed examples where $D$ is squarefree, the polynomial $P_\mathcal{O}(j)$ is irreducible; hence in the cases calculated we have in fact found the Hilbert class polynomial.

Finally, our computations as in Figure \ref{fig:compu_q3n3}, Proposition \ref{n=1}, and Figure \ref{fig:compu_q2proper} agree with the results of Schweizer in \cite[Theorem 6]{sch97}.
Although it may seem as if there is a discrepancy in the value(s) of $j$ for $q=2$ and the order generated by $X$ with $X^2+TX+T^3 = 0$, this is due to this order not being maximal; the maximal order is generated by $X$ with $X^2+X+T = 0$, with $j$-invariant $(T+1)^3$ as in Figure \ref{fig:compu_q2proper}, and the remaining factor agrees with Schweizer's table.

We now list the below tables, each of which lists the polynomials $P_\mathcal{O}$, with a brief description of each:
\begin{description}
    \item[$q=3$:] Here $\mathcal{O} = A[\sqrt{D}]$; when
    \begin{description}
        \item[$D$ is squarefree:] Figures \ref{fig:compu_q3n2}, \ref{fig:compu_q3n3}, and \ref{fig:compu_q3n4_squarefree} for $\deg{D} = 2$, $3$, and $4$ respectively,
        \item[$D$ is not squarefree:] Figure \ref{fig:compu_q3n3_squareful} for $\deg{D} = 3$, and Figure \ref{fig:compu_q3n4_squareful} for an example where $\deg{D} = 4$ and $P_\mathcal{O}$ is irreducible,
        \item[$D$ has many factors:] Figure \ref{fig:compu_q3factor} gives an example where the order $\mathcal{O} = A[\sqrt{D}]$ is included in many other orders, illustrating the factorisation in Theorem \ref{algorithm}.
    \end{description}
    \item[$q=5$:] Here $\mathcal{O} = A[\sqrt{D}]$, for squarefree $\deg{D} = 2$, in Figure \ref{fig:compu_q5n2}, providing counterexamples to Dorman's conjecture.
    \item[$q=2$:] Here we either list $P_\mathcal{O}^2$ for $\mathcal{O} = A[\sqrt{D}]$ in Figures \ref{fig:compu_q2n3} and \ref{fig:compu_q2n5} for $\deg{D} = 3$ and $5$ respectively, or we list $P_\mathcal{O}$ for $\mathcal{O} = A[f\sqrt{D}\xi]$ in Figure \ref{fig:compu_q2proper}.
\end{description}

\renewcommand{\arraystretch}{1.2}

\begin{figure}[H]
    \centering
    \begin{tabular}{|c|p{0.5\textwidth}|}
        \hline
        $D$ ($q = 3$,   & $P_\mathcal{O}$ (with constant coefficients factorised) \\
        $\deg{D} = 2$)  & \\ \hline
        $2T^2+1$    & $j^2 + (T^9 + 2T^7 + T^5 + 2T)j + (T+1)^4(T+2)^4$ \\ \hline
        $2T^2 + 2$  & $j^2 + (T^9 + 2T^7 + 2T^5)j + 2T^8$ \\ \hline
    \end{tabular}
    \caption{Table of $P_\mathcal{O}$ for $q=3$, $\mathcal{O} = A[\sqrt{D}]$, and $D$ quadratic and squarefree}
    \label{fig:compu_q3n2}
\end{figure}

\begin{figure}[H]
    \centering
    \begin{tabular}{|c|p{0.71\textwidth}|}
        \hline
        $D$ ($q = 3$, $\deg{D} = 3$)    & $P_\mathcal{O}$ (with only leading and constant coefficients included and factorised) \\ \hline
        $T^3 + T$               & $ j^4 + \dotsb + T^8 (T + 2)^{16} (T^3 + T^2 + 2T + 1)^4$ \\ \hline
        $T^3 + 2T$              & $j^4 + \dotsb + T^8 (T + 1)^8 (T + 2)^8 (T^3 + 2T + 1)^4$ \\ \hline
        $T^3 + 2T + 1$          & $\begin{multlined}[t]
            j^7 + \dotsb + 2 (T^3 + T^2 + T + 2)^4 (T^3 + T^2 + 2T + 1)^4 \\
            \times (T^3 + 2T + 1)^2 (T^3 + 2T + 2)^4 (T^3 + T^2 + 2)^4
        \end{multlined}$ \\ \hline
        $T^3 + 2T + 2$          & $j + 2 T^4 (T + 1)^4 (T + 2)^4 (T^3 + 2T + 2)^2$ \\ \hline
        $T^3 + T^2 + 1$         & $j^6 + \dotsb + (T + 2)^{12} (T^3 + T^2 + 2)^4 (T^3 + 2T^2 + 1)^4 (T^3 + 2T^2 + 2T + 2)^4$ \\ \hline
        $T^3 + T^2 + 2$         & $j^3 + \dotsb + 2 T^{12} (T + 1)^{12} (T^3 + T^2 + 2)^2$ \\ \hline
        $T^3 + 2T^2 + 1$        & $j^5 + \dotsb + 2 (T + 1)^{24} (T^3 + 2T^2 + 1)^2 (T^3 + 2T + 2)^4$ \\ \hline
        $T^3 + 2T^2 + 2$        & $j^2 + \dotsb + T^8 (T+1)^4 (T + 2)^{12}$ \\ \hline
    \end{tabular}
    \caption{Table of $P_\mathcal{O}$ for $q=3$, $\mathcal{O} = A[\sqrt{D}]$, and $D$ cubic and squarefree}
    \label{fig:compu_q3n3}
\end{figure}

\begin{figure}[H]
    \centering
    \begin{tabular}{|c|p{0.7\textwidth}|}
        \hline
        $D$ ($q = 3$, $\deg{D} = 4$)    & $P_\mathcal{O}$ (with leading and constant coefficients included and factorised) \\ \hline
        $2T^4+1$    & $j^8 + \dotsb + (T + 1)^{16} (T + 2)^{16} (T^3 + T^2 + 2T + 1)^4 (T^3 + 2T^2 + 2T + 2)^4$ \\ \hline
        $2T^4+2$    & $j^8 + \dotsb + T^{32} (T^3 + T^2 + 2)^4 (T^3 + 2T^2 + 1)^4$ \\ \hline
        $2T^4+T+1$  & $j^8 + \dotsb + 2 (T + 1)^{36} (T^3 + 2T + 2)^4 (T^3 + 2T^2 + 2T + 2)^4$ \\ \hline
        $2T^4+T+2$  & $j^2 + \dotsb + T^{12} (T + 1)^4 (T + 2)^8 (T^3 + T^2 + T + 2)^4$ \\ \hline
        $2T^4+T^2+1$    & $\begin{multlined}[t]
            j^{12} + \dotsb + 2 (T^3 + 2T + 1)^4 (T^3 + 2T + 2)^4 (T^3 + T^2 + 2)^4 \\
            \times (T^3 + T^2 + T + 2)^4 (T^3 + 2T^2 + 1)^4 (T^3 + 2T^2 + T + 1)^4
        \end{multlined}$ \\ \hline
        $2T^4+T^2+T$    & $j^6 +\dotsb + T^{12} (T + 1)^{28} (T^3 + T^2 + 2T + 1)^4$ \\ \hline
        $2T^4+T^2+T+2$  & $j^6 +\dotsb + T^{28} (T + 2)^{12} (T^3 + T^2 + 2T + 1)^4$ \\ \hline
        $2T^4+2T^2+1$   & $j^4 +\dotsb + 2(T + 1)^{20} (T + 2)^{20}$ \\ \hline
        $2T^4+2T^2+T$   & $j^4 +\dotsb + T^8 (T + 1)^8 (T + 2)^{16} (T^3 + T^2 + 2)^4$ \\ \hline
        $2T^4+2T^2+T+1$ & $\begin{multlined}[t]
            j^{10} +\dotsb +(T + 2)^{20} (T^3 + 2T + 2)^4 (T^3 + T^2 + 2)^4 \\
            \times (T^3 + 2T^2 + T + 1)^4 (T^3 + 2T^2 + 2T + 2)^4
        \end{multlined}$ \\ \hline
        $2T^4+2T^2+T+2$ & $j^4 +\dotsb + 2T^{16} (T + 1)^{16} (T^3 + 2T + 1)^4$ \\ \hline
    \end{tabular}
    \caption{Table of $P_\mathcal{O}$ for $q=3$, $\mathcal{O} = A[\sqrt{D}]$, and $D$ quartic and squarefree}
    \label{fig:compu_q3n4_squarefree}
\end{figure}

\begin{figure}[H]
    \centering
    \begin{tabular}{|c|p{0.71\textwidth}|}
        \hline
        $D$ ($q = 3$, $\deg{D} = 3$)    & $P_\mathcal{O}$ (with only leading and constant coefficients included and factorised) \\ \hline
        $T^3$       & $(j+2T^2(T+1)^4) \times (j^3 + \dotsb + 2 T^2 (T + 1)^{16} (T^3 + T^2 + 2T + 1)^4)$ \\ \hline
        $T^3+T^2$   & $(j + 2 (T + 1)^2 (T + 2)^4) \times (j^2 +\dotsb + (T + 1)^4 (T + 2)^8 (T^3 + 2T^2 + T + 1)^4)$ \\ \hline
        $T^3+2T^2$  & $(j + 2 T^4 (T+2)^2) \times (j^3 +\dotsb + T^4 (T + 2)^8 (T^3 + 2T + 1)^4 (T^3 + 2T^2 + 1)^4)$ \\ \hline
    \end{tabular}
    \caption{Table of $P_\mathcal{O}$ (all not irreducible) for $q=3$, $\mathcal{O} = A[\sqrt{D}]$, and $D$ cubic and not squarefree}
    \label{fig:compu_q3n3_squareful}
\end{figure}

\begin{figure}[H]
    \centering
    \begin{tabular}{|c|p{0.71\textwidth}|}
        \hline
        $D$ ($q = 3$, $\deg{D} = 4$)    & $P_\mathcal{O}$ (with leading and constant coefficients included and factorised) \\ \hline
        $\begin{gathered}[t]
            2T^4 + T^2 + 2 \\
            = 2(T^2+1)^2
        \end{gathered}$ & $\begin{multlined}[t]
            j \times [j^2 + (T^{27} + 2T^{21} + 2T^{19} + 2T^{17} + T^{15} + 2T^{13} + 2T^9 + 2T^7 + T^5) j \\
            + 2 T^8 (T + 1)^8 (T + 2)^8 (T^2 + 1)]
        \end{multlined}$ \\ \hline
    \end{tabular}
    \caption{Example of $P_\mathcal{O}$ for $q=3$, $\mathcal{O} = A[\sqrt{D}]$, and a selected $D$ quartic and not squarefree}
    \label{fig:compu_q3n4_squareful}
\end{figure}

\begin{figure}[H]
    \centering
    \begin{tabular}{|c|p{0.72\textwidth}|}
        \hline
        $D$ ($q=3$) & $P_{\mathcal{O}}$ (factorised, with leading and constant coefficients included) \\ \hline
        $T$                 & $j + 2T^2(T+1)^4$ \\ \hline
        $T(T+2)^2$          & $\bigl[j + 2T^2(T+1)^4\bigr] \times \bigl[j^2 + \dotsb + T^4 (T + 1)^8 (T^3 + 2T^2 + 1)^4\bigr]$ \\ \hline
        $T(T+1)^2$          & $\begin{multlined}[t]
            \bigl[j + 2T^2(T+1)^4\bigr] \\
            \times \bigl[j^4 + \dotsb + T^8 (T + 1)^4 (T^3 + 2T + 1)^4 (T^3 + 2T^2 + T + 1)^4\bigr]
        \end{multlined}$ \\ \hline
        $T(T+1)^2(T+2)^2$   & $\begin{multlined}[t]
            \bigl[j + 2T^2(T+1)^4\bigr] \times \bigl[j^2 + \dotsb + T^4 (T + 1)^8 (T^3 + 2T^2 + 1)^4\bigr] \\
            \times \Bigl[j^4 + \dotsb + T^8 (T + 1)^4 (T^3 + 2T + 1)^4 (T^3 + 2T^2 + T + 1)^4\Bigr] \\
            \times \begin{multlined}[t]
                \left[j^8 + \dotsb + T^{16} (T + 1)^8 (T^3 + T^2 + 2T + 1)^8 (T^5 + T^3 + T + 1)^4 \right. \\
                \left. \times (T^5 + T^4 + T^3 + 2T^2 + T + 1)^4 (T^5 + T^4 + 2T^3 + 2T^2 + 1)^4\right]
            \end{multlined}
        \end{multlined}$ \\ \hline
    \end{tabular}
    \caption{Table of $P_{A[\sqrt{D}]}$ for $q=3$ and selected $D$ to illustrate the factorisation in Theorem \ref{algorithm}}
    \label{fig:compu_q3factor}
\end{figure}

\begin{figure}[H]
    \centering
    \begin{tabular}{|c|p{0.84\textwidth}|}
        \hline
        $D$ ($q = 5$,   & $P_\mathcal{O}$ (with constant coefficient factorised) \\
        $\deg{D} = 2$)  & \\ \hline
        $2T^2 + 1$  & $j^2 + (T^{25} + 4T^{21} + 3T^{19} + 4T^{17} + 4T^{15} + 4T^{11} + 2T^9 + T^7 + 2T) j + 3 (T+1)^{12} (T+4)^{12}$ \\ \hline
        $2T^2 + 2$  & $j^2 + (T^{25} + 4T^{21} + T^{19} + T^{17} + 2T^{15} + 3T^{11} + 3T^9 + 3T^7) j + T^{12} (T + 2)^6 (T + 3)^6$ \\ \hline
        $2T^2 + 3$  & $j^2 + (T^{25} + 4T^{21} + 4T^{19} + T^{17} + 3T^{15} + 2T^{11} + 3T^9 + 2T^7) j + 4 T^{12} (T + 1)^6 (T + 4)^6$ \\ \hline
        $2T^2 + 4$  & $j^2 + (T^{25} + 4T^{21} + 2T^{19} + 4T^{17} + T^{15} + T^{11} + 2T^9 + 4T^7 + 2T) j + 2 (T + 2)^{12} (T + 3)^{12}$ \\ \hline
    \end{tabular}
    \caption{Table of $P_\mathcal{O}$ for $q=5$, $\mathcal{O} = A[\sqrt{D}]$, and $D$ quadratic and squarefree}
    \label{fig:compu_q5n2}
\end{figure}

\begin{figure}[H]
    \centering
    \begin{tabular}{|c|p{0.35\textwidth}|}
        \hline
        $D$ ($q=2$,     & $P_\mathcal{O}^2$ (with only leading and constant \\
        $\deg{D} = 3$)  & coefficients included and factorised)\\ \hline
        $T^3 + 1$       & $j^6 + \dotsb + T^6 (T + 1)^9 (T^3 + T^2 + 1)^3$ \\ \hline
        $T^3 + T + 1$   & $j^6 + \dotsb + T^9 (T + 1)^6 (T^3 + T + 1)^3$ \\ \hline
    \end{tabular}
    \caption{Table of $P_\mathcal{O}^2$ for $q=2$, $\mathcal{O} = A[\sqrt{D}]$, and $D$ cubic and squarefree (inseparable case)}
    \label{fig:compu_q2n3}
\end{figure}

\begin{figure}[H]
    \centering
    \begin{tabular}{|c|p{0.73\textwidth}|}
        \hline
        $D$ ($q=2$, $\deg{D} = 5$)  & $P_\mathcal{O}^2$ (with only leading and constant coefficients included and factorised) \\ \hline
        $T^5 + 1$       & $j^{14} + \dotsb + T^9 (T + 1)^{21} (T^3 + T^2 + 1)^3 (T^3 + T + 1)^6 (T^5 + T^2 + 1)^3$ \\ \hline
        $T^5 + T + 1$   & $j^{14} + \dotsb + T^{21} (T + 1)^9 (T^3 + T + 1)^3 (T^3 + T^2 + 1)^6 (T^5 + T^4 + T^2 + T + 1)^3$ \\ \hline
        $T^5 + T^2 + 1$ & $j^{14} + \dotsb + T^9 (T + 1)^{21} (T^3 + T^2 + 1)^3 (T^3 + T + 1)^6 (T^5 + T^2 + 1)^3$ \\ \hline
        $T^5 + T^2 + T$ & $j^{14} + \dotsb + T^{21} (T + 1)^9 (T^3 + T + 1)^3 (T^3 + T^2 + 1)^6 (T^5 + T^4 + T^2 + T + 1)^3$ \\ \hline
        $T^5 + T^3 + 1$ & $\begin{multlined}[t]
            j^{18} + \dotsb + T^{18} (T + 1)^{18} (T^3 + T + 1)^3 (T^3 + T^2 + 1)^3 (T^5 + T^3 + 1)^3 \\
            \times (T^5 + T^4 + T^3 + T^2 + 1)^3
        \end{multlined}$ \\ \hline
        $T^5 + T^3 + T + 1$         & $j^{10} + \dotsb + T^{15} (T + 1)^{15} (T^5 + T^3 + T^2 + T + 1)^3 (T^5 + T^4 + T^3 + T + 1)^3$ \\ \hline
        $T^5 + T^3 + T^2 + T$       & $j^{10} + \dotsb + T^{15} (T + 1)^{15} (T^5 + T^3 + T^2 + T + 1)^3 (T^5 + T^4 + T^3 + T + 1)^3$ \\ \hline
        $T^5 + T^3 + T^2 + T + 1$   & $j^{10} + \dotsb + T^{15} (T + 1)^{15} (T^5 + T^3 + T^2 + T + 1)^3 (T^5 + T^4 + T^3 + T + 1)^3$ \\ \hline
    \end{tabular}
    \caption{Table of $P_\mathcal{O}^2$ for $q=2$, $\mathcal{O} = A[\sqrt{D}]$, and $D$ quintic and squarefree (inseparable case)}
    \label{fig:compu_q2n5}
\end{figure}

\begin{figure}[H]
    \centering
    \begin{tabular}{|c|p{0.66\textwidth}|}
        \hline
        Minimal polynomial of $f\sqrt{D}\xi$ & $P_{\mathcal{O}}$ (constant term factorised) \\
        for $\mathcal{O}=A[f\sqrt{D}\xi]$ ($q=2$) & \\ \hline
        $X^2+X+1+T^3$           & $j^3 + (T^6 + T^5 + T^4) j^2 + (T^7 + T^6) j + T^{12}$ \\ \hline
        $\begin{gathered}[t]
            X^2+T(1+T)X \\
            +T(1+T)(1+T+T^2)
        \end{gathered}$ & $\begin{multlined}[t]
            j^4 + (T^8 + T^6 + T^5 + T^3) j^3 + (T^{10} + T^9 + T^8 + T^6 + T^5 + T^4) j^2 \\
            + (T^{16} + T^{14} + T^{13} + T^{11} + T^9 + T^8 + T^7 + T^6) j + T^6(T+1)^6
        \end{multlined}$ \\ \hline
        $\begin{gathered}[t]
            X^2+T(1+T)X \\
            +T(1+T)(1+T^2+T^3)
        \end{gathered}$ & $\begin{multlined}[t]
            j^4 + (T^{12} + T^8 + T^7 + T^6 + T^5 + T^4) j^3 \\
            + (T^{14} + T^{12} + T^{11} + T^7 + T^6 + T^4) j^2 \\
            + (T^{24} + T^{21} + T^{19} + T^{15} + T^{14} + T^{13} + T^{12} + T^{11} + T^{10} + T^7) j \\
            + T^6 (T + 1)^6 (T^5 + T^4 + T^3 + T^2 + 1)^3
        \end{multlined}$ \\ \hline
        $\begin{gathered}[t]
            X^2 +(T^2+T+1)X \\
            +(T^2+T+1)^2
        \end{gathered}$ & $j + T^3 (T + 1)^3 (T^2 + T + 1)$ \\ \hline
        $X^2+X+T^3+T+1$ & $j + T^3 (T + 1)^3$ \\ \hline
        $X^2+X+T^5+T^3+1$   & $j + T^6 (T+1)^6$ \\ \hline
        $X^2+TX+T^3$    & $(j+(T+1)^3) (j+(T+1)^6)$ \\ \hline
        $X^2+X+T$       & $j+(T+1)^3$ \\ \hline
        $X^2+X+T$ ($q=4$)   & $j + (T^2+T+1)^5$ \\ \hline
        $X^2+X+T$ ($q=8$)   & $j + (T+1)^9 (T^3+T^2+1)^9$ \\ \hline
    \end{tabular}
    \caption{Table of $P_\mathcal{O}$ for $q=2$ and $\mathcal{O}=A[f\sqrt{D}\xi]$ (separable case)}
    \label{fig:compu_q2proper}
\end{figure}

%%%%%%%%%%%%%%%%%%%%%%%%%%%%%%%%%%%%%%%%%%%%%%%%%%%%%%%%%%%%%%%%%%%%%%%%%%%%%%%%%%%%%%%%%%%%%%%%%%%%%%%%%%%%%%%%%%%%%%%%%%%%%%%%%%%%
%%%%%%%%%%%%%%%%%%%%%%%%%%%%%%%%%%%%%%%%%%%%%%%%%%%%%%%%%%%%%%%%%%%%%%%%%%%%%%%%%%%%%%%%%%%%%%%%%%%%%%%%%%%%%%%%%%%%%%%%%%%%%%%%%%%%
%%%%%%%%%%%%%%%%%%%%%%%%%%%%%%%%%%%%%%%%%%%%%%%%%%%%%%%%%%%%%%%%%%%%%%%%%%%%%%%%%%%%%%%%%%%%%%%%%%%%%%%%%%%%%%%%%%%%%%%%%%%%%%%%%%%%
\centerline{\rule{10cm}{.5pt}}
%%%%%%%%%%%%%%%%%%%%%%%%%%%%%%%%%%%%%%%%%%%%%%%%%%%%%%%%%%%%%%%%%%%%%%%%%%%%%%%%%%%%%%%%%%%%%%%%%%%%%%%%%%%%%%%%%%%%%%%%%%%%%%%%%%%%
%%%%%%%%%%%%%%%%%%%%%%%%%%%%%%%%%%%%%%%%%%%%%%%%%%%%%%%%%%%%%%%%%%%%%%%%%%%%%%%%%%%%%%%%%%%%%%%%%%%%%%%%%%%%%%%%%%%%%%%%%%%%%%%%%%%%

\paragraph{Acknowledgments --}
This project emerged under the umbrella of a \emph{GandA Research In Pairs} visit of FP and PP to LB. 
This visit was made possible through funding from IRN GandA (CNRS) and the Department of Mathematical Sciences from Stellenbosch University, which the authors heartily thank.
The authors thank Valentijn Karemaker and Mihran Papikian for useful discussions around the results of \cite{KaremakerKatenPapikian}; Florian Breuer, Antoine Leudi\`ere, and Damien Robert for feedback on Hilbert modular polynomials; and Emmanuel Reinecke for conversations on universality.
FP and PP wish to thank Stellenbosch University for the hospitality during their stay.

\centerline{\rule{10cm}{.5pt}}

%%%%%%%%%%%%%%%%%%%%%%%%%%%%%%%%%%%%%%%%%%%%%%%%%%%%%%%%%%%%%%%%%%%%%%%%%%%%%%%%%%%%%%%%%%%%%%%%%%%%%%%%%%%%%%%%%%%%%%%%%%%%%%%%%%%%
%%%%%%%%%%%%%%%%%%%%%%%%%%%%%%%%%%%%%%%%%%%%%%%%%%%%%%%%%%%%%%%%%%%%%%%%%%%%%%%%%%%%%%%%%%%%%%%%%%%%%%%%%%%%%%%%%%%%%%%%%%%%%%%%%%%%

%% \nocite{*}
%\bibliography{refs.bib}
\printbibliography{}
%%%%%%%%%%%%%%%%%%%%%%%%%%%%%%%%%%%%%%%%%%%%%%%%%%%%%%%%%%%%%%%%%%%%%%%%%%%%%%%%%%%%%%%%%%%%%%%%%%%%%%%%%%%%%%%%%%%%%%%%%%%%%%%%%%%%
%%%%%%%%%%%%%%%%%%%%%%%%%%%%%%%%%%%%%%%%%%%%%%%%%%%%%%%%%%%%%%%%%%%%%%%%%%%%%%%%%%%%%%%%%%%%%%%%%%%%%%%%%%%%%%%%%%%%%%%%%%%%%%%%%%%%
%%%%%%%%%%%%%%%%%%%%%%%%%%%%%%%%%%%%%%%%%%%%%%%%%%%%%%%%%%%%%%%%%%%%%%%%%%%%%%%%%%%%%%%%%%%%%%%%%%%%%%%%%%%%%%%%%%%%%%%%%%%%%%%%%%%%
	\normalsize\vfill
	\noindent\rule{7cm}{0.5pt}
 
    \smallskip
	\noindent
	{Liam {\sc Baker}} {(\tt \href{liambaker@sun.ac.za}{liambaker@sun.ac.za})}  --
    {\sc Department of Mathematical Sciences, Stellenbosch University}, Stellenbosch, South Africa 7599.

	\medskip
	\noindent
	{Fabien {\sc Pazuki}} {(\tt \href{fpazuki@math.ku.dk}{fpazuki@math.ku.dk})} --
	{\sc Department of Mathematical Sciences, University of Copenhagen}.
	Universitetsparken 5, 2100 Copenhagen \o{} (Denmark).

    \medskip
	\noindent
	{Patricio {\sc P{\'e}rez-Pi{\~n}a}} {(\tt \href{papp@math.ku.dk}{papp@math.ku.dk})}  --
	{\sc Department of Mathematical Sciences, University of Copenhagen}.
	Universitetsparken 5, 2100 Copenhagen \o{} (Denmark).

%%%%%%%%%%%%%%%%%%%%%%%%%%%%%%%%%%%%%%%%%%%%%%%%%%%%%%%%%%%%%%%%%%%%%%%%%%%%%%%%%%%%%%%%%%%%%%%%%%%%%%%%%%%%%%%%%%%%%%%%%%%%%%%%%%%%
%%%%%%%%%%%%%%%%%%%%%%%%%%%%%%%%%%%%%%%%%%%%%%%%%%%%%%%%%%%%%%%%%%%%%%%%%%%%%%%%%%%%%%%%%%%%%%%%%%%%%%%%%%%%%%%%%%%%%%%%%%%%%%%%%%%%
%%%%%%%%%%%%%%%%%%%%%%%%%%%%%%%%%%%%%%%%%%%%%%%%%%%%%%%%%%%%%%%%%%%%%%%%%%%%%%%%%%%%%%%%%%%%%%%%%%%%%%%%%%%%%%%%%%%%%%%%%%%%%%%%%%%%
%%%%%%%%%%%%%%%%%%%%%%%%%%%%%%%%%%%%%%%%%%%%%%%%%%%%%%%%%%%%%%%%%%%%%%%%%%%%%%%%%%%%%%%%%%%%%%%%%%%%%%%%%%%%%%%%%%%%%%%%%%%%%%%%%%%%
\end{document}